\newcommand{\Cc}{\mathbb{C}} 
\newcommand{\Pp}{\mathbb{P}}
\newcommand{\Nn}{\mathbb{N}}
\newcommand{\Zz}{\mathbb{Z}}
\newcommand{\Qq}{\mathbb{Q}} 
\newcommand{\Ff}{\mathbb{F}}
\theoremstyle{plain}
\newtheorem{theorem}{Theorem}[section]    
\newtheorem{twisting lemma}[theorem]{Twisting lemma}
\newtheorem{hilbert-malle}[theorem]{Hilbert-Malle theorem}
\newtheorem{lemma}[theorem]{Lemma}       
\newtheorem{proposition}[theorem]{Proposition}  
\newtheorem{corollary}[theorem]{Corollary}   
\newtheorem{criterion}[theorem]{Criterion}       
\theoremstyle{remark}
\newtheorem{definition}[theorem]{Definition}      
\newtheorem{remark}[theorem]{Remark}   
\font\fifteenpc=cmcsc10 scaled 1400
\font\tensf=cmss10
\font\fivesf=cmss10 scaled 500
\font\sevensf=cmss10 scaled 700
\def\sf{\fam\sffam\tensf}
\def\specz{\prec}
\def\verydiff{\hbox{$\hskip 2pt \#\hskip 2pt$}}
\def\Gabs{\hbox{\rm G}}
\def\Gal{\hbox{\rm Gal}}
\def\max{\hbox{\rm max}}
\def\cm{\hbox{\hbox{\rm C}\kern-5pt{\raise 1pt\hbox{$|$}}}}
\def\lhfl#1#2{\smash{\mathop{\hbox to 12mm{\leftarrowfill}}
\limits^{#1}_{#2}}}
\def\rhfl#1#2{\smash{\mathop{\hbox to 12mm{\rightarrowfill}}
\limits^{#1}_{#2}}}
\def\build#1_#2^#3{\mathrel{
\mathop{\kern 0pt#1}\limits_{#2}^{#3}}}
\def\htrait#1#2{\smash{\mathop{\hbox to 12mm{\hrulefill}}
\limits^{#1}_{#2}}}
\def\sxbullet{{\raise 2pt\hbox{\bf .}}}
\begin{document}

\title[Groups with no parametric Galois extension]{\hskip 12mm Groups with no parametric \hskip 15mm Galois extension}

\author{Pierre D\`ebes}

\email{Pierre.Debes@math.univ-lille1.fr}

\address{Laboratoire Paul Painlev\'e, Math\'ematiques, Universit\'e Lille 1, 59655 Villeneuve d'Ascq Cedex, France}

\subjclass[2010]{Primary 12F12, 11R58, 14E20,  ; Secondary 14E22, 12E30,11Gxx}

\keywords{Galois extensions, inverse Galois theory, specialization, parametric extensions, twisting} 

\date{\today}

\thanks{{\it Acknowledgment}. This work was supported in part by the Labex CEMPI  (ANR-11-LABX-0007-01).}

\begin{abstract} 
We disprove a strong form of the Regular Inverse Galois Problem: there exist finite groups $G$ which do not have a realization $F/\Qq(T)$ that induces all Galois extensions $L/\Qq(U)$ of group $G$ by specializing $T$ to $f(U) \hskip -1mm \in \hskip -1mm \Qq(U)$. For these groups, we produce two extensions $L/\Qq(U)$ that cannot be simultaneously induced, thus even disproving a weaker Lifting Property.
Our examples of such groups $G$ include symmetric groups $S_n$, $n\geq 7$, infinitely many ${\rm PSL}_2(\Ff_p)$, the Monster.
Two variants of the question with $\Qq(U)$ replaced by $\Cc(U)$ and $\Qq$ are answered similarly, 
the second one under a diophantine  ``working hypothesis'' going back to a problem of Schinzel. 
We introduce two new tools: a comparizon theorem between the invariants of an extension $F/\Cc(T)$ and those obtained by specializing $T$ to $f(U) \hskip -1mm \in \hskip -1mm \Cc(U)$;
and, given two regular Galois extensions of $k(T)$, a finite set of polynomials $P(U,T,Y)$ that 
say whether these extensions have a common specialization $E/k$.
\end{abstract}

\maketitle


\section{Introduction} \label{sec:intro}

Given two fields $k\subset K$, a finite Galois extension $F/k(T)$
and a point $t_0\in \Pp^1(K)$, there is a well-defined notion of {\it specialized extension $F_{t_0}/K$}
(see {\it Basic terminology}). 
If $F$ is the splitting field over $k(T)$ of a polynomial $P\in k[T,Y]$, monic in $Y$, irreducible in $\overline{k}[T,Y]$ and $t_0$ not a root of the discriminant $\Delta_P\in k[T]$ of $P$ \hbox{w.r.t} $Y$, $F_{t_0}$ is the splitting field over $K$ of the polynomial $P(t_0,Y)$. We are mostly interested in the situations $K=k$ and $K=k(U)$ (with $U$ a new indeterminate).

The specialization process has been much studied 
towards the {\it Hilbert irreducibility} issue of existence of specializations $t_0\in k$ preserving the Galois group. Investigating the 
set, say ${\mathcal Sp}_K(F/k(T))$, of all specialized extensions $F_{t_0}/K$ with $t_0\in \Pp^1(K)$ is a further goal. 
For $k=K=\Qq$, \cite{IsrJ2} shows for example that the number of extensions 
$F_{t_0}/\Qq$ of group $G=\Gal(F/\Qq(T))$ and discriminant $|d_E|\leq y$ grows at least 
like a power of $y$, for some positive exponent, thereby proving for $G$ the ``lower bound part'' of
 a  conjecture of Malle.

Little was known on an even more fundamental question: whether ${\mathcal Sp}_K(F/k(T))$ can contain all Galois extensions $E/K$ of group contained in $G=\Gal(F/k(T))$; we then say that $F/k(T)$ is {\it $K$-parametric}, as for example $\Qq(\sqrt{T})/\Qq(T)$. 
Strikingly no group was known yet {\it not to have} a {$\Qq$-parametric or a $\Qq(U)$-parametric extension} $F/\Qq(T)$ 
while 
only four: $\{1\}$, $\Zz/2\Zz$, $\Zz/3\Zz$, $S_3$, are known to have one. No group with no $\Cc(U)$-parametric extension $F/\Cc(T)$ was even known, while only a few more with one are: cyclic groups, dihedral groups $D_{2n}$ with $n$ odd.

\vskip 2,5mm

\noindent
1.1. {\bf Groups with no $K$-parametric extension $F/k(T)$.} 
We produce many such groups: 
\vskip 0,5mm 

\noindent 
a) {\it $k=\Cc$ and $K=\Cc(U)$: 
non cyclic nilpotent groups $G$ of odd order,
symmetric groups $S_n$ with $n\geq 5$, alternating groups $A_n$ with $n \geq 6$, linear groups ${\rm PSL}_2(\Ff_p)$ with $p>7$ prime, all sporadic groups, etc.
\vskip 0,8mm 

\noindent
{\rm b)} $k=\Qq$ and $K=\Qq(U)$: the same $S_n$ and $A_n$ except for $n=6$, the ${\rm PSL}_2(\Ff_p)$ with $(\frac{2}{p})=(\frac{3}{p})=-1$, the Monster $M$, etc.
\vskip 0,8mm 

\noindent
{\rm c)} $k=K=\Qq$: 
the same last groups, under some ``working hypothesis''.}
\vskip 1,5mm

\noindent
We say more about the ``working hypothesis'' in \S 1.4 below and full statements are in  \S \ref{ssec:non-param}-\ref{ssec:non-Q-param-extensions}.
\vskip 0,5mm

These results fit in the framework of Inverse Galois Theory, a prominent open problem of which is the {\it Regular Inverse Galois Problem}: is every finite group the Galois group of some extension
$F/\Qq(T)$ that is $\Qq$-regular, \hbox{\it i.e.} $F\cap \overline \Qq = \Qq$? Possessing a $\Qq(U)$-parametric extension $F/\Qq(T)$ is for a group $G$ a strong variant. Our results show that this strong variant fails and that conditionally so does the weaker $\Qq$-parametric analog, 
thereby setting boundaries for inverse Galois theory over $\Qq$, a topic where few general statements were available. Narrowing these boundaries further,  \hbox{\it e.g.} removing ``conditionally'' in the version over $\Qq$, still remains desirable. We note this weaker but unconditional result\footnote{Remark \ref{rem:on-corollary-with-WH} explains how Legrand's result can be deduced from ours under our working hypothesis.} of Legrand \cite{legrand-at-least-1-nonparam}: every non trivial group that has at least one $\Qq$-regular realization
$F/\Qq(T)$ has one that is not $\Qq$-parametric.

\vskip 2,5mm

\noindent
1.3. {\bf The Lifting Property.} Our best result is in fact stronger than the non-existence of  parametric extensions and may also be more informative, in that it shows better the obstruction to having a parametric extension that our method reveals, which is not the absence of regular realizations $F/k(T)$ but the existence of several that cannot be ``simultaneously lifted''. Specifically, for every group $G$ in list a) above with $k= \Cc$, or, in list b) with $k\subset \Cc$, excluding $G=A_n$\footnote{For $G=A_n$, the two extensions $L_i/k(U)$ from (*) should be replaced by three.}, we show that

%

\vskip 1,5mm

\noindent
{\rm (*)} {\it there exist two $k$-regular Galois extensions $L_1/k(U)$ and $L_2/k(U)$ of group $G$ with this property: there is no $k$-regular Galois extension $F/k(T)$ of group $G$ such that $F\Cc/\Cc(T)$ specializes to $L_1\Cc/\Cc(U)$ and $L_2\Cc/\Cc(U)$ at two points $T_{01}, T_{02}\in \Cc(U)$. 
}
\vskip 1,5mm


%
%
%
In geometrical terms, this shows that the following {\it Lifting Property} 
\vskip 1,5mm

\noindent
 (${\rm LP}_k(G)$) {\it any $N$ $k$-G-Galois covers of $\Pp^1_k$ of group $G$ can be, after scalar extension to $\Cc$, obtained by pull-back along a map $\Pp^1_\Cc\rightarrow \Pp^1_\Cc$ from some $k$-G-Galois cover $f:X\rightarrow \Pp^1_k$ of group $G$}. 
 \vskip 1,5mm
 
 \noindent
fails for $N=2$. Thus in the chain of implications (for each $N\geq 2$):
\vskip 1,5mm

\hskip 8mm 
$\begin{matrix}
\hbox{\it $G$ has a} \\
\hbox{\it $k(U)$-parametric} \\
\hbox{\it extension $F/k(T)$} \\
\end{matrix}
\hskip 2mm \Rightarrow \hskip 2mm 
\begin{matrix}
$\hbox{\it ${\rm LP}_k(G)$}$ \\
\hbox{{\it holds for} $N$} \\
\end{matrix}
\hskip 2mm \Rightarrow\hskip 2mm 
\begin{matrix}
\hbox{\it $G$ is a regular} \\
\hbox{\it Galois group} \\
\hbox{\it over $k$} \\
\end{matrix}
$
\vskip 1mm

\noindent
not only the first condition fails, but also the second one.

Our Lifting Property further relates to some variant investigated by Colliot-Th\'el\`ene \cite{colliot-annals}, about which he also obtains a negative conclusion, for some $p$-group $G$ over some ``large'' field and he observes that ``other examples remain to be seen''.

\vskip 2,5mm

\noindent
1.4. \hbox{\bf parametric \hbox{\it vs.} generic.} As a consequence of our results, the groups from list \S 1.1 a) do not have a {\it generic} extension $F/\Cc(T)$; indeed {\it ge\-ne\-ric} is a stronger notion meaning 
``$L$-parametric for all fields $L\supset \Cc$''.
This was already known 
by a result of Buhler-Reichstein \cite{buhler-reichstein}: the only  groups to have a generic extension $F/\Cc(T)$ are the cyclic groups and dihedral groups $D_{2n}$ with $n$ odd. 
Our non $\Cc(U)$-parametric
con\-clu\-sion however is stronger: the extensions to be para\-me\-tri\-zed in the generic context include all Galois extensions $E/L$ of group $G$ with $L$ {\it any field containing $\Cc$} and
it readily follows that $G$ should then be a subgroup of ${\rm PGL}_2(\Cc)$ \cite[prop.8.14]{JLY}. This is an important preliminary reduction for generic extensions that can no longer be used
 if $F/\Cc(T)$ is only $\Cc(U)$-parametric (\hbox{\it i.e.}  only parametrizes extensions $E/\Cc(U)$). There exist in fact groups that have a $\Cc(U)$-parametric extension but no generic extension $F/\Cc(T)$ (corollary \ref{cor:subgroups-PGL2C}).

\vskip 2,5mm

\noindent
1.5. {\bf The comparizon theorem.}
We will first focus on the situation $K=k(U)$ with $k\subset \Cc$. Results mentioned above follow from a general criterion (criterion \ref{crit:non-param-gen-criterion}) for some set of $k$-regular Galois extensions $L/k(U)$ of group $G$ to be $k(U)$-specializations of a $k$-regular Galois extension $F/k(T)$ of group $G$. A main point of our approach is that
\vskip 1mm

\noindent
(*) {\it the branch point number of an extension\footnote{The extension $F/\Cc(T)$ need not be assumed to be Galois in this statement.} $F/\Cc(T)$ cannot drop under specialization of $T$ in $\Cc(U)$, unless $F/\Cc(T)$ is one from a list of exceptional extensions with $F$ of genus $0$ {\rm (see  theorem \ref{thm:order-invariants} (a))}.}
\vskip 1mm

\noindent
Despite its basic nature, this did not seem to be known; the difficulty is that the group may drop and that the ramification of the specialization point $T_0\in \Cc(U)$ may cancel some of the ramification of $F/\Cc(T)$. 
We prove a more precise version giving better estimates 
of the branch point number and other invariants of specialized extensions 
$F_{T_0}/\Cc(U)$, $T_0\in \Cc(U)$, which could be interesting beyond this paper  (theorem \ref{thm:r-in-spec}).
 
\vskip 2,5mm

\noindent
1.6. {\bf A pre-order on Galois extensions $L/\Cc(T)$.}
The situation $K=\Cc(U)$ has another interesting feature: specialized extensions $F_{T_0}/\Cc(U)$ with $T_0\in \Cc(U)$ remain extensions of the rational function field in one indeterminate, as the initial extension $F/\Cc(T)$. The specialization process induces a (partial) pre-order on the set of Galois extensions $L/\Cc(T)$. We will show that this is in fact an order on a big subset (see theorem \ref{thm:order-invariants} (b)), with this consequence: 
\vskip 1mm

\noindent
(*) {\it for ``most'' groups $G$ {\rm(}e.g. all groups of rank $\geq 4${\rm )}, there is at most one $\Cc(U)$-parametric extension $F/\Cc(T)$ of group $G$.}
\vskip 1mm

The pre-order that we use to investigate the minimal elements raises further questions about the ordered structure of Galois extensions of $k(T)$ 
that are certainly worthwhile being studied.



%

\vskip 2,5mm

\noindent
1.7. {\bf The twisted polynomial.}
Our results in the situation that $k=K$ is a number field will  be obtained from those with $K=k(U)$ by specialization, but of the indeterminate $U$ this time. To this end we will generalize a tool introduced in \cite{IsrJ2} as the ``self-twisted cover''.
Theorem \ref{thm:twist-main} is the concrete statement that makes this specialization approach work.
%
It is interesting for its own sake: given two $k$-regular Galois extensions  $F/k(T)$, $L/k(T)$ of group $G$, it provides a finite set of polynomials $\widetilde P_F^L(U,T,Y)\in k[U,T,Y]$ which have the answer to the question of whether  $F/k(T)$ and $L/k(T)$ have a common specialization: 
 
\vskip 1mm

\noindent
(*) {\it for all  but finitely many $u_0\in k$, $L_{u_0}/k = F_{t_0}/k$ for some $t_0\in k$ not a branch point of $F/k(T)$ if and only if one of the polynomials  $\widetilde P_F^L(u_0,t_0,Y)$ has a root $y_0\in k$; and similarly, $L_{U}/k(U) = F_{T_0}/k(U)$ for some $T_0\in k(U)$ iff one polynomial  $\widetilde P_F^L(U,T_0,Y)$ has a root $Y_0\in k(U)$.}
\vskip 1,5mm

\noindent
The working hypothesis, which goes back to some diophantine problem of Schinzel, relates the absence of $k(U)$-rational points $(T_0,Y_0)\in k(U)^2$ on each of the curves $\widetilde P_F^L(U,T,Y)=0$ to the absence, for infinitely many $u_0\in k$, of $k$-rational points $(t_0,y_0)\in k^2$ on each of the curves $\widetilde P_F^L(u_0,T,Y)=0$ (see \S \ref{ssec:working-hypothesis}), thereby extending Hilbert's Irreducibility Theorem to polynomials with two indeterminates and one parameter. It has no known counter-example.

\vskip 4mm


The paper is organized as follows. \S \ref{sec:presentation} presents in full detail the results of our paper. We reduce their proofs to that of two main theorems: the comparizon theorem \ref{thm:order-invariants} and the ``twisting'' theorem \ref{thm:twist-main}. We state them and explain their implications. Their proofs, which are rather independent, are given in \S \ref{sec:C(U)-specializations} and \S \ref{sec:twisting}. Finally \S \ref{sec:preliminaries} is an appendix where we have collected a few classical results that enter 
in our proofs and that we have rephrased to fit our field arithmetic set-up;  this section is used in 
 \S \ref{sec:C(U)-specializations} and in  \S \ref{sec:twisting}.
  We start below with some basic terminology.


\vskip 3mm

\noindent
{\fifteenpc Basic terminology} (for more details, see \cite{DeDo1} or \cite{DeLe1}).

The base field $k$ is always assumed to be of characteristic $0$. Is also fixed a big algebraically closed field containing the complex field $\Cc$ and the indeterminates that will be used and in which all field compositum should be understood.

Given a field $K$, an extension $F/K(T)$ is said to be \textit{\textbf {$K$-regular}} 
if $F\cap \overline K= K$. We make no distinction between a $K$-regular extension $F/K(T)$ and the associated $K$-regular cover $f:X\rightarrow \Pp^1$: $f$ is the normalization of $\Pp^1_K$ in $F$ and $F$ is the function field $K(X)$ of $X$. 
The ``field extension'' viewpoint is mostly used in this paper.


We also use \textit{\textbf {affine equations}}: 
we mean the irreducible polynomial $P\in K[T,Y]$ of a primitive element of $F/K(T)$, \hbox{integral over $K[T]$.}

By \textit{\textbf {group}} and \textit{\textbf {branch point set}}
of a $K$-regular extension $F/K(T)$, we mean 
those of the extension  $F\overline K/\overline K(T)$: the group of $F\overline K/\overline K(T)$ is the Galois group of its Galois closure. The branch point set of $F\overline K/\overline K(T)$ is the (finite) set of points $t\in \Pp^1(\overline K)$ such that the associated discrete valuations are ramified in  $F/\overline K(T)$.

The field $K$ being of characteristic $0$, we also use the 
\textit{\textbf {inertia canonical invariant}}\footnote{This is also called ``branching type'' by some authors.} ${\bf C}$ of the $K$-regular extension $F/K(T)$, defined as follows. If ${\bf t}=\{t_1,\ldots,t_r\}$ is the branch point set of $f$, then ${\bf C}$ is a $r$-tuple 
$(C_1,\ldots,C_r)$ of conjugacy classes of the group $G$ of $f$: for $i=1,\ldots,r$, $C_i$ is the conjugacty class of the distinguished\footnote{``distinguished'' means that these generators correspond to the $e_i$th root $e^{2i\pi/e_i}$ of $1$ in the canonical isomorphism  $I_{\frak P} \rightarrow \mu_{e_i} =\langle e^{2i\pi/e_i} \rangle$.} generators of the inertia groups $I_{\frak P}$ above $t_i$ in the Galois closure $\widehat F/K(T)$ of $F/K(T)$. 

We also use the notation ${\bf e}=(e_1,\ldots,r_r)$ for the $r$-tuple
with $i$th entry the ramification index $e_i=|I_{\frak P}|$ of primes above $t_i$; $e_i$ is also the  order of elements of $C_i$, $i=1,\ldots,r$.

We say that two $K$-regular extensions $F/K(T)$ and $L/K(T)$ are \textit{\textbf {isomorphic}} 
if there is a field isomorphim $F\rightarrow L$ that restricts to an automorphism $\chi: K(T)\rightarrow K(T)$ equal to the identity on $K$ and that they are \textit{\textbf {K{\rm (}\hskip -2pt T\hskip -0pt{\rm )}-isomorphic}} if in addition $\chi$ is the identity on $K(T)$.

Given a 
Galois extension $F/K(T)$ and $t_0\in {\mathbb P}^1(K)$, 
the \textit{\textbf {specialization of $F/K(T)$ at $t_0$}} 
is the Galois extension $F_{t_0}/K$ defined as follows. Consider 
the localized ring $A_{t_0}=K[T]_{\langle T-t_0\rangle}$ of $K[T]$ at $t_0$, the integral closure $B_{t_0}$ of 
$A_{t_0}$ in $F$. Then $F_{t_0}/K$ the residue extension of an arbitrary prime 
ideal of $B_{t_0}$ above $\langle T-t_0\rangle$. (As usual use the local ring 
$K[1/T]_{\langle 1/T\rangle}$ and its ideal $\langle 1/T\rangle$ if $t_0=\infty$). 

If $P\in K[T,Y]$ is an affine equation of $F/K(T)$ and $\Delta_P\in K[T]$ is its 
discriminant w.r.t. $Y$, then for every $t_0\in K$ such that $\Delta_P(t_0)\not=0$, 
$t_0$ is not a branch point of $F/K(T)$ and the specialized extension $F_{t_0}/K$
is the splitting field over $K$ of $P(t_0,Y)$.

If $K^\prime$ is a field containing $K$, the \textit{\textbf{specialization $F_{t_0}/K^\prime$ of $F/K(T)$ at $t_0$}}
is the extension $(FK^\prime)_{t_0}/K^\prime$. If $K^\prime=K(U)$, $T_0\in K(U)$ is a non-constant
rational function\footnote{We use a capital letter for the specialization point $T_0$ to stress that it is a function 
$T_0(U)$ contrary to the situation for which it is a point in the ground field and the notation 
$t_0$ is preferred.} and
$P\in K[T,Y]$ is an affine equation of $F/K(T)$, then $\Delta_P(T_0)\not=0$ and so 
$P(T_0(U),Y)$ is an affine equation of the specialized extension $F_{T_0}/K(U)$.

If the extension $F/K(T)$ is not Galois, the above definition leads to several specializations $F_{t_0}/K$: the prime
ideals of $B_{t_0}$ above $\langle T-t_0\rangle$ are not conjugate in general. When we use this extended definition (only once in theorem \ref{thm:r-in-spec} (a)), we will talk about {\it a} specialization instead of {\it the} specialization $F_{t_0}/K$.

We finally recall the \textit{\textbf{Riemann Existence Theorem}} (RET) which indicates that Galois extensions $F/k(T)$ are well-understood if $k$ is algebraically closed and which we will use in this practical form.
\vskip 1,5mm

\noindent 
{\bf Riemann Existence Theorem.} {\it Given a group $G$, an integer $r\geq 2$, a subset ${\bf t}\subset \Pp^1(\Cc)$ of $r$ points and an $r$-tuple ${\bf C} = (C_1,\ldots,C_r)$ of conjugacy classes of $G$, there is a Galois extension $F/\Cc(T)$  of group $G$, branch point set ${\bf t}$ and inertia canonical invariant ${\bf C}$ iff there exists $(g_1,\ldots,g_r)\in C_1\times \cdots \times C_r$ such that  
$g_1 \cdots g_r = 1$ and $\langle g_1,\ldots, g_r \rangle=G$. Furthermore the number of such extensions $F/\Cc(T)$ {\rm (}in a fixed algebraic closure $\overline{k(T)}${\rm )} equals the number of $r$-tuples $(g_1,\ldots,g_r)$ as above, counted modulo componentwise conjugation by an element of $G$.}
\vskip 1mm


\section{Main results} \label{sec:presentation}
We present our main results: the specialization process and the associated
 order in the situation $k=\Cc$ and $K=\Cc(U)$ (\S \ref{ssec:order}), some new examples of groups 
 with a $\Cc(U)$-parametric extension $F/\Cc(T)$ (\S \ref{ssec:param}), a method to produce groups 
 with no $k(U)$-parametric extension $F/k(T)$  (\S \ref{ssec:non-param}), our
 ``twisted polynomial'' $\widetilde P_F^L(U,T,Y)$ and its use towards the construction of 
 groups with no $k$-parametric extension $F/k(T)$ (\S \ref{ssec:non-Q-param-extensions}).

\subsection{$\Cc(U)$-specializations of Galois extensions $F/\Cc(T)$} \label{ssec:order}
This subsection gives the main definitions and our first main tool (theorem \ref{thm:order-invariants}).

\subsubsection{Comparizon theorem} \label{ssec:comparizon}
Given a $K$-regular extension $F/K(T)$, we use the following notation for its {\it invariants}: $G_F$ for the group, $r_F$ for the branch point number, ${\bf C}_F$ for the inertia canonical invariant 
and $g_F$ for the genus of $F$; they are invariant inside the isomorphism class of $F/K(T)$. 

Given two Galois extensions $F/\Cc(T)$ and $L/\Cc(T)$, we write  
\vskip 0,8mm

\centerline{$F/\Cc(T) \specz L/\Cc(T)$}
\vskip 1,2mm

\noindent 
if $L/\Cc(U)$ is the specialization $F_{T_0}/\Cc(U)$ of $F/\Cc(T)$ at some non-constant rational function $T_0\in \Cc(U)$.

For a conjugacy class $C$ of a group $G$, set $C^\Zz =\bigcup_{\alpha\in \Zz} C^\alpha$;  $C^\Zz$ corresponds to the conjugacy class of the cyclic subgroup generated by any element of $C$. 
%
Given tuples ${\bf C}=(C_1,\ldots,C_r)$ and ${\bf C}^\prime=(C_1^\prime,\ldots,C_r^\prime)$ of conjugacy classes of $G$ and $G^\prime$, write ${\bf C} \prec {\bf C}^\prime$
%
%
 if for every $j\in \{1,\ldots,r^\prime\}$, there exists $i\in \{1,\ldots,r\}$ such that $C^\prime_j \subset C_i^\Zz $.
 

\begin{theorem} \label{thm:order-invariants}
{\rm (a)} Let $F/\Cc(T)$ and $L/\Cc(T)$ be two finite Galois extensions. Assume $g_F\geq 1$. Then we have:
\vskip 1,5mm

\centerline{$F/\Cc(T) \specz L/\Cc(T) \Rightarrow (G_F,r_F,{\bf C}_F) \prec (G_L,r_L,{\bf C}_L)$}
\vskip 1,5mm

\noindent
where the right-hand side condition means that $G_F\supset G_L$, $r_F \leq r_L$ and ${\bf C}_F \prec {\bf C}_L$. If in addition $G_F=G_L$, the impli\-cation also holds if $g_F=0$; and we have
$g_F\leq g_L$ if $r_F\geq 4$.\end{theorem}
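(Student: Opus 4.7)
The plan is to present $L = F_{T_0}$ as the Galois compositum $F \cdot \Cc(U)$ inside a fixed algebraic closure, and to transfer each claim to a ramification analysis of the pullback of $F/\Cc(T)$ along the map $T_0\colon \Pp^1_U \to \Pp^1_T$. Since $F/\Cc(T)$ is Galois, $F \otimes_{\Cc(T)} \Cc(U)$ splits into copies of $L$, so $G_L = \Gal(F/F \cap \Cc(U))$ embeds in $G_F$, settling $G_F \supset G_L$. By L\"uroth's theorem, $F \cap \Cc(U) = \Cc(V)$ is a rational intermediate field with $[\Cc(V):\Cc(T)] = [G_F : G_L]$.

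The relation ${\bf C}_F \prec {\bf C}_L$ comes from a local inertia computation. Any $u_0 \in B_L$ must lie above some $t_i = T_0(u_0) \in B_F$, else the tame pullback is unramified at $u_0$. Writing $m_{u_0}$ for the multiplicity of $T_0 - t_i$ at $u_0$ (the pole order when $t_i = \infty$), Abhyankar's lemma in the tower $L/\Cc(U)$ yields an inertia at any place of $L$ above $u_0$ that is cyclic of order $e_i/\gcd(e_i,m_{u_0})$, generated up to $G_F$-conjugacy by $\sigma_i^{\gcd(e_i, m_{u_0})}$. Thus $u_0 \in B_L$ iff $e_i \nmid m_{u_0}$, and the inertia class at such $u_0$ sits in $(C^F_i)^{\Zz}$.

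The main obstacle is $r_F \leq r_L$. Write $r_L = \sum_{i=1}^{r_F} k_i$ with $k_i = \#\{u_0 \in T_0^{-1}(t_i) : e_i \nmid m_{u_0}\}$. If $k_i = 0$, then $T_0 - t_i$ (or $1/T_0$ when $t_i = \infty$) is an $e_i$-th power in $\Cc(U)^{\times}$ up to a nonzero constant, so the distinct-preimage count $a_i$ satisfies $a_i \leq n/e_i$ and the fibre-ramification of $T_0$ above $t_i$ is $n - a_i \geq n(1 - 1/e_i)$. Inserting this into Riemann--Hurwitz for $T_0$, namely $\sum_i(n - a_i) \leq 2n - 2$, together with the bound $(e_i - 1)b_i \leq n - a_i$ for the bad preimages, I expect a lower bound of the form $r_L \geq n(r_F - 4) + 4$, which gives $r_L \geq r_F$ once $r_F \geq 4$. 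The small cases must be treated by hand: $r_F = 2$ is excluded by Riemann--Hurwitz for $F/\Cc(T)$ under $g_F \geq 1$; and for $r_F = 3$, Mason--Stothers in $\Cc(U)$ applied to the would-be identity $c_i f_i^{e_i} - c_j f_j^{e_j} = t_j - t_i$ rules out two simultaneous $k_i = 0$ (the exponents satisfy $1/e_i + 1/e_j < 1$ since $g_F \geq 1$ with $r_F = 3$ imposes $1/e_1 + 1/e_2 + 1/e_3 \leq 1$), and a short numerical case analysis using $a_1 \leq n/e_1$ together with the inequality $k_j \geq (a_j e_j - n)/(e_j - 1)$ forces $k_2 + k_3 \geq 3$.

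Finally, the addenda come from the same framework. The hypothesis $G_F = G_L$ means $\Cc(V) = \Cc(T)$, so no rational intermediate field can absorb ramification, and the main conclusion extends to $g_F = 0$. For $g_L \geq g_F$ when $r_F \geq 4$, applying Riemann--Hurwitz to the tower $L/F$ of degree $m = [\Cc(U) : \Cc(V)]$ yields $g_L \geq m(g_F - 1) + 1 \geq g_F$ once $g_F \geq 1$; the alternative $g_F = 0$ is vacuous under $r_F \geq 4$, since Riemann--Hurwitz for $F/\Cc(T)$ gives $\sum_i(1 - 1/e_i) = 2 - 2/|G_F| < 2$, whereas $r_F \geq 4$ terms each $\geq 1/2$ force $\sum_i(1 - 1/e_i) \geq 2$, a contradiction.
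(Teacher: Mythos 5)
Your overall strategy (Riemann--Hurwitz for the map $T_0:\Pp^1\to\Pp^1$, fiber-multiplicity counting, and tame Abhyankar for the inertia) is essentially the paper's, and most steps are sound: the group containment, the inertia computation, the bound $r_L\geq N(r_F-4)+4$ (your sketch does assemble correctly: $\sum_i a_i\geq (r_F-2)N+2$ from Riemann--Hurwitz, $\sum_i s_i\leq 2N-2$ from $(e_i-1)s_i\leq N-a_i$, and $r_L=\sum_i(a_i-s_i)$), and your genus argument via Riemann--Hurwitz for $L/F$ is actually cleaner than the paper's quantitative estimate.

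The genuine gap is in the case $r_F=3$. First, the premise ``$k_i=0$ implies $T_0-t_i$ is an $e_i$-th power in $\Cc(U)^\times$ up to constant'' is false for non-polynomial $T_0$: $k_i=0$ only controls the multiplicities in the fiber $T_0^{-1}(t_i)$, not the pole orders of $T_0$, so the numerator $a-t_ib$ is a perfect $e_i$-th power but the denominator need not be. Second, and more seriously, the conclusion you want Mason--Stothers to deliver --- that two of the $k_i$ cannot vanish simultaneously --- is simply not true. Take $T_0=j(\lambda)=256(\lambda^2-\lambda+1)^3/(\lambda^2(\lambda-1)^2)$ of degree $6$ and $F/\Cc(T)$ the $\Zz/6\Zz$-extension branched at $0,1728,\infty$ with $\mathbf{e}=(3,2,6)$ (so $g_F=1$): every point of $T_0^{-1}(0)$ has multiplicity $3$ and every point of $T_0^{-1}(1728)$ has multiplicity $2$, so $k_1=k_2=0$, while $k_3=3$. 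Your case analysis therefore omits a case that actually occurs. It is repairable inside your own framework: if $k_1=k_2=0$ then $a_1\leq N/e_1$, $a_2\leq N/e_2$, so $a_3\geq N(1-1/e_1-1/e_2)+2\geq N/e_3+2$ (using $\varepsilon\leq1$, which holds since $g_F\geq1$ and $r_F=3$), and then your inequality $k_3\geq(a_3e_3-N)/(e_3-1)$ gives $k_3\geq 2e_3/(e_3-1)>2$, hence $k_3\geq3$. The paper avoids the case split altogether by proving the single inequality $r_L\,(1-1/e_\infty)\geq(r_F-2-\varepsilon)N+2$ (its estimate (b-1)), which follows from $a_i\leq k_i+(N-k_i)/e_i$ summed against $\sum_i a_i\geq(r_F-2)N+2$, and which yields $r_L>2$ whenever $r_F=3$ and $\varepsilon\leq1$; I would recommend replacing the Mason--Stothers step by that uniform bound. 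Finally, your treatment of the $g_F=0$, $G_F=G_L$ addendum (``no rational intermediate field can absorb ramification'') is not yet an argument: the point, as in the paper, is that $r_L<3$ would force $G_L$ to be trivial or cyclic, while no group occurring with $g_F=0$ and $r_F=3$ is cyclic.
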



As recalled in \S \ref{ssec:exceptional-list}, the excluded case $g_F=0$ is known to only happen when $r_F\leq 3$ and $G_F$ is a subgroup of ${\rm PGL}_2(\Cc)$, \hbox{\it i.e.}, one of these groups: 
$\Zz/n\Zz$ {\rm (}$n\geq 1${\rm )}, $(\Zz/2\Zz)^2$, $A_4$, $S_4$, $A_5$, $D_{2n}$ {\rm (}$n\geq 3${\rm )}.
For each such group $G_F$, there is, up to isomorphism, only one Galois extension $F/\Cc(T)$ of group $G_F$ and genus $g_F =0$.

Theorem \ref{thm:order-invariants} will be deduced from theorem \ref{thm:r-in-spec} which offers more precise estimates, for example, the lower bound 
\vskip 1,5mm

\noindent
(*) \hskip 30mm $r_{L} \geq (N-4)\hskip 1pt r_F + 4$ 

\vskip 1,5mm

\noindent
if $L/\Cc(T) = F_{T_0}/\Cc(T)$ with $T_0\in \Cc(T)$ of degree $N$.

\subsubsection{The order $\prec$} \label{sssec:order}
These estimates will further show that, as stated below, the pre-order $\prec$ is antisymmetric on a big subset of all Galois extensions, regarded modulo isomorphisms.
 %
%

Specifically, denote by ${\mathcal G}^\ast$ the set of groups that are 
\vskip 1mm

\noindent
(*) {\it {\rm (}of rank $\geq 4${\rm )} {\rm or} {\rm (}or rank $3$ and odd order{\rm )} {\rm or}  {\rm (}of rank $2$ and order not divisible by $2$ or $3${\rm )} {\rm or}  {\rm (}a subgroup of ${\rm PGL}_2(\Cc)${\rm )} }
\vskip 1mm

\noindent
and by ${\mathcal E}^\ast$ the set of all Galois extensions $F/\Cc(T)$, viewed up to isomor-  phism such that ({$G_F\in {\mathcal G}^\ast, G_F\not\subset {\rm PGL}_2(\Cc)$) {\it or} ($g_F=0$).

The notion of ``parametric extensions'' appearing below was introduced in \S \ref{sec:intro}; the definition is recalled right next in \S \ref{ssec:basics-parametric-extensions}.


\vskip 2mm

\noindent
{\bf Theorem \ref{thm:order-invariants}.} 
{\rm (b)} {\it The relation $\specz$ induces a {\rm (}partial{\rm )} order on ${\mathcal E}^\ast$. Consequently, for every group $G\in {\mathcal G}^\ast$, there is at most one Galois extension $F/\Cc(T)$ of group $G$ that is 
$\Cc(U)$-parametric.}
\vskip 2mm
The uniqueness part follows from
the first part: the main point is that if an extension $F/\Cc(T)$ is $\Cc(U)$-parametric of group $G\in {\mathcal G}^\ast$, it is {\it the} smallest  (for $\specz$) Galois extension $L/\Cc(T)$ of group $G$.\footnote{For a Galois extension $L/\Cc(T)$ of group $G$, there is a Galois extension $F/\Cc(T)$ such that $F/\Cc(T) \prec L/\Cc(T)$ and $F/\Cc(T)$ is minimal (for $\prec$) among all Galois ex\-ten\-sions of $\Cc(T)$ of group $G$. Several such extensions $F/\Cc(T)$ exist in general.
}

We have no example of two non-isomorphic Galois extensions $F/\Cc(T)$ and $L/\Cc(T)$ such that $F/\Cc(T)\prec L/\Cc(T)$ and $L/\Cc(T)\prec F/\Cc(T)$, and in particular, no example of a group $G$ 
that has two $\Cc(U)$-para\-me\-tric extensions $F/\Cc(T)$. In fact the groups that are known to have at least one 
$\Cc(U)$-parametric extension $F/\Cc(T)$ are the finite subgroups of ${\rm PGL}_2(\Cc)$, and for them, uniqueness is part of theorem \ref{thm:order-invariants} (for the existence, see corollary \ref{cor:subgroups-PGL2C}).

The proofs of the two parts of theorem \ref{thm:order-invariants} are given in \S \ref{sec:branch-point-number-growth}
and \S \ref{ssec:proof-continued}.

 


\subsubsection{Parametric extensions} \label{ssec:basics-parametric-extensions} The following definition 
was introduced 
by F.~Legrand \cite{legrand-thesis}, \cite {legrand1}, \cite{legrand2}. Close variants exist in connection with
 the notion of generic polynomials \cite{JLY}. 

\begin{definition} \label{def:parametric-extension} A finite $k$-regular Galois extension $F/k(T)$ of group $G$ 
is {\it $k$-parametric} if for every Galois extension $E/k$ of group contained in $G$, there exists $t_0 \in \Pp^1(k)$, not a branch point of $F/k(T)$, such that the specialized extension $F_{t_0}/k$ is $k$-isomorphic to $E/k$. Given an overfield $K\supset k$, $F/k(T)$ is {\it $K$-parametric} if $FK/K(T)$ is $K$-parametric. The group $G$ is then said {\it to have a $K$-parametric extension $F/k(T)$}.
\end{definition}

The extension $\Qq(\sqrt{T})/\Qq(T)$ is the standard exemple of an extension $F/\Qq(T)$ that is $K$-parametric; it is for all fields $K \supset \Qq$ and so is in fact generic. Recall indeed that ``generic'' for a finite $k$-regular Galois extension $F/k(T)$ means ``$K$-parametric for all fields $K \supset k$''.


%

\begin{remark}

\vskip 1mm
%
%
(a) {\it A $k(U)$-parametric extension $F/k(T)$ is  $k$-parametric.}



\begin{proof} Let $F/k(T)$ be a $k(U)$-parametric extension of group $G$. The extension $Fk(U)/k(U,T)$ is $k(U)$-regular and {\it a fortiori} the extension $F/k(T)$ is $k$-regular. Let $E/k$ be a Galois extension of group $H\subset G$.
As $F/k(T)$ is $k(U)$-parametric, there is $T_0\in k(U)$ such that the specialized extension $F_{T_0}/k(U)$ is $k(U)$-isomorphic to $E(U)/k(U)$. Hence for all but finitely many $u_0\in \Pp^1(k)$, the extension, $(F_{T_0})_{u_0}/k$, obtained by specializing  $F_{T_0}/k(U)$ at $u_0$ is $E/k$. The conclusion follows since, as explained below, for all but finitely many $u_0\in \Pp^1(k)$, $(F_{T_0})_{u_0}/k$ is also the specialized extension $F_{T_0(u_0)}/k$. 

This is clear if $T_0\in k$.  Assume $T_0\notin k$ and let $P\in k[T,Y]$ be an affine equation of $F/k(T)$.  Then $F_{T_0}$ is 
the splitting field over $k(U)$ of $P(T_0(U),Y)$ and, as $F/k(T)$ is Galois, it is also the splitting field of any irreducible factor $Q\in k[U,Y]$ of $P(T_0(U),Y)$. Thus such a $Q$ is an affine equation of the Galois extension $F_{T_0}/k(U)$. For all but finitely many $u_0\in \Pp^1(k)$, the extension $(F_{T_0})_{u_0}/k$
is the splitting field over $k$ of $Q(u_0,Y)$ and also of $P(T_0(u_0),Y)$.
This concludes the argument as for all but finitely many $u_0\in \Pp^1(k)$, $F_{T_0(u_0)}/k$ is also 
the splitting field over $k$ of $P(T_0(u_0),Y)$. \end{proof}
\vskip -2mm

This argument applies inductively to show that condition ``$F/k(T)$  is $k(U_1,\ldots,U_s)$-parametric'' is stronger and stronger as $s$ gets bigger; it remains however always weaker than ``generic''. 
\vskip 2mm

\noindent
(b) On the other hand, for a $k$-regular Galois extension $F/k(T)$ and an algebraic extension $E/K$ with $K\supset k$, the connection between ``$E$-parametric'' and ``$K$-parametric'' is not so clear. As we will see, our criterion to produce non $k(U)$-parametric extensions is all the more efficient that there are more $k(U)$-regular realizations of the group $G$ in question, and so will be more fruitful when $k$ is algebraically closed. 
We however do not have any proof of any implication.
\end{remark}

%
%

\subsection{Groups with a $k(U)$-parametric extension $F/k(T)$} \label{ssec:param}
We have the following statement.

\begin{corollary} \label{cor:subgroups-PGL2C}
All subgroups of ${\rm PGL}_2(\Cc)$:
\vskip 1mm
 
\centerline{$\Zz/n\Zz$ {\rm (}$n\geq 1${\rm )}, $(\Zz/2\Zz)^2$, $A_4$, $S_4$, $A_5$, $D_{2n}$ {\rm (}$n\geq 3${\rm )}}
\vskip 1mm

\noindent
have a $\Cc(U)$-parametric extension. Out of them, $\Zz/n\Zz$ with $n=1,2,3$ and $D_6 = S_3$ have a $k(U)$-parametric extension for every field $k$ of characteristic $0$.
\end{corollary}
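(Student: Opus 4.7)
The plan is to produce, for each subgroup $G \subset {\rm PGL}_2(\Cc)$ in the list, the distinguished genus-zero $\Cc$-regular Galois extension $F/\Cc(T)$ of group $G$, and to verify its $\Cc(U)$-parametricity by a Tsen--Brauer--Severi argument; the second assertion will then follow from the existence of classical generic polynomials over $\Qq$ for the four small groups.

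For the construction I set $F = \Cc(W)$, let $G$ act on $\Cc(W)$ through the inclusion $G \hookrightarrow {\rm PGL}_2(\Cc) = \Aut_\Cc(\Cc(W))$, and take $T \in \Cc(W)^G$ a generator of the (rational, by L\"uroth) fixed field. The resulting extension $F/\Cc(T)$ is $\Cc$-regular Galois of group $G$ and of genus $0$, and is the unique such extension up to isomorphism by the classification recalled after theorem \ref{thm:order-invariants}(a).

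For $\Cc(U)$-parametricity, fix $H \subset G$ and an $H$-Galois extension $E/\Cc(U)$, and write $\pi \in \Cc(W)$ for the rational function such that $T = \pi(W)$. I reduce to constructing $w \in E$, primitive for $E/\Cc(U)$, satisfying
\[
\sigma(w) \;=\; \sigma \cdot w \qquad (\sigma \in H),
\]
where $\sigma(\cdot)$ is the Galois action on $E$ and $\sigma \cdot w$ the M\"obius action of $\sigma \in H \subset {\rm PGL}_2(\Cc)$ extended $\Cc(U)$-linearly to $\Pp^1(E)$. Granted this, $T_0 := \pi(w)$ lies in $E^H = \Cc(U)$ by $G$-invariance of $\pi$, and the roots of $\pi(X) - T_0$ in $\overline{\Cc(U)}$ form the $G$-orbit $G \cdot w \subset E$; hence $F_{T_0} = \Cc(U)(G \cdot w) \supset \Cc(U)(w) = E$, with equality because all roots already lie in $E$. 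To build $w$ I interpret $H$-equivariant morphisms $Y \to \Pp^1_\Cc$, where $\Cc(Y) = E$ and $Y/H = \Pp^1_U$, as sections of the $\Pp^1$-bundle $P = (Y \times \Pp^1_\Cc)/H \to \Pp^1_U$ attached to the diagonal $H$-action. Over the generic point $\eta = \Spec \Cc(U)$, $P_\eta$ is a $1$-dimensional Brauer--Severi variety over $\Cc(U)$, classified by $H^1(\Cc(U), {\rm PGL}_2) \hookrightarrow \Br(\Cc(U))$. Since $\Br(\Cc(U)) = 0$ by Tsen's theorem, $P_\eta \simeq \Pp^1_{\Cc(U)}$ and its $\Cc(U)$-points supply an infinite family of equivariant $w$'s. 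A non-primitive such $w$ would be Galois-fixed by a non-trivial $\sigma \in H$, hence (by equivariance) M\"obius-fixed by $\sigma$, forcing $w \in (\Pp^1)^\sigma \subset \Pp^1(\Cc)$; as the M\"obius fixed-point locus of $H$ in $\Pp^1_\Cc$ is finite, a generic member of the family is non-constant and therefore primitive.

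The second assertion reduces to the fact that $\{1\}, \Zz/2\Zz, \Zz/3\Zz, S_3 = D_6$ admit classical generic polynomials over $\Qq$ (see for instance \cite{JLY}), which supply $\Qq$-regular Galois extensions $F/\Qq(T)$ that are $K$-parametric for every $K \supset \Qq$; specializing to $K = k(U)$ gives the desired $k(U)$-parametric extensions. The main obstacle in the above plan is precisely the Tsen step in the third paragraph: it is what restricts the first half of the corollary to $k = \Cc$ and the second half to this narrow list of groups, for which ad hoc generic polynomials bypass the Brauer-group obstruction over an arbitrary base field.
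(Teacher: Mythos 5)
Your argument is correct and is essentially the paper's route: a genus-zero Galois realization of $G$ (the paper produces it via the Riemann Existence Theorem in \S \ref{ssec:exceptional-list}, you via the M\"obius action and L\"uroth), followed by twisting by the target extension $E/\Cc(U)$ and Tsen's theorem to supply the rational point --- your bundle $P_\eta$ is exactly the paper's twisted cover $\widetilde X^\varphi$, and your third paragraph is the twisting lemma \ref{prop:twisted cover} unpacked by hand; the second assertion is handled identically via generic polynomials from \cite{JLY}. One bookkeeping slip: the equivariance condition should be $\sigma(w)=\sigma^{-1}\cdot w$ (or the M\"obius action taken through an anti-homomorphism), since $\sigma(w)=\sigma\cdot w$ for all $\sigma\in H$ forces every commutator of $H$ to fix $w$ as a M\"obius transformation and hence admits no non-constant solution when $H$ is nonabelian; with that sign corrected, $T_0=\pi(w)$ is still Galois-invariant and the rest of your argument goes through verbatim, as the definition of parametricity only requires $F_{T_0}/\Cc(U)$ to be isomorphic to $E/\Cc(U)$ as an extension.
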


Theorem \ref{thm:order-invariants} (b) shows further that the $\Cc(U)$-parametric extension claimed to exist is unique up to isomorphism. 

\begin{proof}
The first part is a consequence of corollary \ref{cor:genus0-param}; the main points are the ``twisting lemma'' 
and Tsen's theorem. The four groups in the second part are known to have a generic extension $F/\Qq(T)$ \cite{JLY}
\end{proof}

\begin{remark}[Parametricity and genericity] \label{rem:param-gen}
Cyclic groups and dihedral groups $D_{2n}$ with $n$ odd were known to have a $\Cc(U)$-parametric extension as they have a generic extension $F/\Cc(T)$: for $\Zz/d\Zz$, take $F=\Cc(T^{1/d})/\Cc(T)$ ($d\geq 1)$; for $D_{2n}$, 
 it is a result of Hashimoto-Miyake \cite{hashimoto-miyake} (see also \cite[theorem 5.5.4]{JLY}). These groups are the only ones to have a generic extension $F/\Cc(T)$ \cite{buhler-reichstein}.
The other subgroups of ${\rm PGL}_2(\Cc)$: $(\Zz/2\Zz)^2$, $A_4$, $S_4$, $A_5$,  $D_{2n}$ with $n$ even, have a $\Cc(U)$-parametric extension but no generic extension $F/\Cc(T)$. Whether subgroups of ${\rm PGL}_2(\Cc)$ other than $\Zz/n\Zz$ with $n=1,2,3$ and $S_3$ have a $\Qq(U)$-parametric extension $F/\Qq(T)$ is unclear.\footnote{Even if for some of these groups ($(\Zz/2\Zz)^2$, $S_4$, $D_{2n}$ with $n$ even), the unique $\Cc(U)$-parametric extension $F/\Cc(T)$ is defined over $\Qq$ (\S \ref{ssec:exceptional-list}), a $\Qq$-model $F_0/\Qq(T)$ is 
not guaranteed to be $\Qq(U)$-parametric: although any extension $L/\Qq(U)$ of group $G$ is a specialization of $F/\Cc(T)$, the specialization point $T_0$, which is in $\Cc(U)$ may not be in $\Qq(U)$. Anticipating on \S \ref{ssec:twisting}, the issue relates to the following: a polynomial equation $P(U,T,Y)=0$ with $P\in \Qq[U,T,Y]$
may have a solution $(T_0(U),Y_0(U))\in \Cc(U)^2$ but no solution in $\Qq(U)^2$: think of $Y^2+T^2+U^2+1=0$.}
\end{remark}

\subsection{Groups with no $k(U)$-parametric extension $F/k(T)$} \label{ssec:non-param}
We explain how we use theorem \ref{thm:order-invariants} to produce groups with no $k(U)$-parametric extension
 $F/k(T)$, with $k$ algebraically closed in \S \ref{ssec:examples-non-paramC} and $k$ non algebraically closed in \S \ref{ssec:examples-non-param}. We start with a general criterion in \S \ref{ssec:general-criterion}. For simplicity, assume $k\subset \Cc$; there is no loss of generality.

Our method will in fact lead to a slightly better conclusion than ``no $k(U)$-parametric extension''. To this end we define a $k$-regular Galois extension $F/k(T)$ to be {\it weakly $k(U)$-parametric} of group $G$ if for every $k$-regular Galois extension $L/k(U)$ of group $G$ (and not of group {\it contained in} $G$ as for {$k(U)$-parametric}), $L\Cc/\Cc(U)$ is a specialization $F_{T_0}/\Cc(U)$ for some $T_0\in \Cc(U)$ (while for {$k(U)$-parametric}, the requested $T_0$ is in $k(U)$). Obviously we have:
\vskip 1mm

\centerline{$k(U)$-parametric $\Rightarrow$  weakly $k(U)$-parametric}


\subsubsection{General criterion} \label{ssec:general-criterion}
Given a subfield $k\subset \Cc$ and a finite group $G$, denote the set of all $k$-regular extensions
$L/k(T)$ of group $G$ by ${\mathcal R}_k(G)$.
From theorem \ref{thm:order-invariants}, if $F/k(T)$ is a weakly $k(U)$-parametric extension of group $G$, we must have
$(G,r_F,{\bf C}_F) \prec (G,r_L,,{\bf C}_L)$ for every extension $L/k(T)\in {\mathcal R}_k(G)$.
The general idea is to show that there is no Galois extension $F/\Cc(T)$ of group $G$ such that
\vskip 1mm

\noindent
(*) \hskip 10mm {\it $r_F \leq r_L$ and ${\bf C}_F \prec {\bf C}_F$ for every $L/k(T)\in {\mathcal R}_k(G)$.}
 
 \vskip 1mm
 
Criterion \ref{crit:non-param-gen-criterion} below uses the following additional notation. Say that two conjugacy classes $C$ and $C^\prime$ of $G$ are {\it very different}, and write then $C \verydiff C^\prime$ if there is no conjugacy class $C_0$ such $C\subset C_0^\Zz$ and $C^\prime \subset C_0^\Zz$. For example, if $C$ is the conjugacy class of a generator of a maximal cyclic subgroup of $G$, then $C \verydiff C^\prime$ if and only if $C^\prime \not\subset C^\Zz$. In particular,
if $C^\prime$ is also the conjugacy class of a generator of a maximal cyclic subgroup of $G$, then $C \verydiff C^\prime$ if and only if $C^\Zz\not=(C^\prime)^\Zz$, {\it i.e.}, if the two maximal cyclic subgroups associated with $C$ and $C^\prime$ are not conjugate in $G$. Many concrete examples appear in \S \ref{ssec:examples-non-paramC} and \S \ref{ssec:examples-non-param} below.



%
%

%
%

\begin{criterion} \label{crit:non-param-gen-criterion}  Let ${\mathcal R}$ be a nonempty subset of ${\mathcal R}_k(G)$. Let $\rho_{\mathcal R}$ be the minimum number $r_L$ for some $L/k(T)\in {\mathcal R}$. Assume
the list of con\-ju\-g\-acy clas\-ses $C$ appearing in some tuple ${\bf C}_L$ with $L/k(T)\in {\mathcal R}$ contains at least $\nu_{\mathcal R}$ of them that are pairwise very different, and that $\nu_{\mathcal R} > \rho_{\mathcal R}$. 
\vskip 1 mm

\noindent
{\rm (**)} {\it  Then there is no $k$-regular Galois extension $F/k(T)$ of group $G$ that admits each extension $L\Cc_/\Cc(U)\in {\mathcal R}$ as a specialization $F_{T_0}/\Cc(U)$ for some $T_0\in \Cc(U)$ {\rm (}depending on $L\Cc/\Cc(U)${\rm )}.}
\vskip 1 mm

\noindent
In particular, $G$ has no weakly $k(U)$-parametric extension and a fortiori no 
$k(U)$-parametric extension $F/k(T)$. 
\end{criterion} 

The smaller the subset ${\mathcal R}$ is the stronger is conclusion (**), which, in the extreme case ${\mathcal R}={\mathcal R}_k(G)$, is equivalent to $G$ not having a  weakly $k(U)$-parametric extension $F/k(T)$. 

\begin{proof} Assume that 
there is a $k$-regular Galois extension $F/k(T)$ of group $G$ such that  $F\Cc/\Cc(T)$ specializes to each of the extensions $L\Cc/\Cc(T)$ with $L/k(T)\in {\mathcal R}$. It follows from theorem \ref{thm:order-invariants} 
 that $r_F \leq \rho_{\mathcal R}$ and ${\bf C}_F \prec {\bf C}_L$ for every $L/k(T)\in {\mathcal R}$.
%
%
Hence if $C$, $C^\prime$ are two conjugacy classes appearing in the list of tuples ${\bf C}_L$ with $L/k(T)\in {\mathcal R}$, there are conjugacy classes $C_{F,i}$, $C_{F,j}$ from ${\bf C}_F$ such that $C\subset C_{F,i}^\Zz$ and $C^\prime \subset C_{F,j}^\Zz$. If $C \verydiff C^\prime$, then $C_{F,i}\not=C_{F,j}$.
Therefore $r_F \geq \nu_{\mathcal R}$. Hence $\rho_{\mathcal R} \geq \nu_{\mathcal R}$, a contradiction.
\end{proof}

\subsubsection{Groups with no $\Cc(U)$-parametric extension} \label{ssec:examples-non-paramC} 
Denote the number of conjugacy classes of maximal cyclic subgroups of a group $G$ by $\nu(G)$
and the rank of $G$ (minimal cardinality of a generating set) by ${\rm rk}(G)$. 

%
%

\begin{corollary} \label{cor:criterion-overC} 
Assume $k$ is algebraically closed. If $\nu(G) \geq {\rm rk}(G)+2$, conclusion {\rm (**)} from criterion \ref{crit:non-param-gen-criterion}  holds  with ${\mathcal R}$ consisting of two extensions $L_1/k(T)$ and $L_2/k(T)$. Consequently $G$ has no weakly $k(U)$-pa\-ra\-me\-tric extension and a fortiori no 
$k(U)$-parametric extension $F/k(T)$.
\end{corollary}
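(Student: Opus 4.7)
I would apply criterion~\ref{crit:non-param-gen-criterion} with $\mathcal R = \{L_1/k(T), L_2/k(T)\}$ of size two, manufactured via the Riemann Existence Theorem so that one extension has branch number exactly $r+1$ and the other carries inertia data displaying all $\nu(G)$ classes of generators of maximal cyclic subgroups. Set $r = \mathrm{rk}(G)$ and $\nu = \nu(G)\geq r+2$; note that $r \geq 2$, since the hypothesis is incompatible with $G$ trivial or cyclic (for cyclic $G$ one has $\nu=1$).

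\textbf{Setup.} First I would pick a minimal generating set $h_1, \ldots, h_r$ of $G$ and, for each of the $\nu$ conjugacy classes of maximal cyclic subgroups of $G$, a generator $c_i$ of a chosen representative. The key arithmetic input to be checked is that $C(c_1), \ldots, C(c_\nu)$ are pairwise very different in the sense of \S\ref{ssec:general-criterion}: if $C(c_i)^{\Zz} = C(c_j)^{\Zz}$, then $c_j = g\hskip 1pt c_i^\alpha g^{-1}$ for some $g \in G$ and $\alpha \in \Zz$, so $\langle c_j \rangle \subset g\langle c_i \rangle g^{-1}$, and by the maximal-cyclicity of both sides this inclusion is an equality, which forces $\langle c_i\rangle$ and $\langle c_j \rangle$ to be conjugate in $G$, hence $i=j$.

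\textbf{The two tuples.} Next I would consider
\begin{align*}
T_1 &= \bigl(h_1, \ldots, h_r,\, (h_1 \cdots h_r)^{-1}\bigr),\\
T_2 &= \bigl(c_1, c_1^{-1}, c_2, c_2^{-1}, \ldots, c_\nu, c_\nu^{-1},\, h_1, \ldots, h_r,\, (h_1 \cdots h_r)^{-1}\bigr),
\end{align*}
of respective lengths $r+1$ and $2\nu+r+1$. By associativity, the consecutive pairs $c_i\cdot c_i^{-1}$ telescope to the identity, so each tuple has product~$1$; each generates $G$ because the $h_j$'s do. As $k$ is algebraically closed of characteristic~$0$, the Riemann Existence Theorem then yields $k$-regular Galois extensions $L_1/k(T)$, $L_2/k(T)$ of group $G$ realizing $T_1$ and $T_2$ (after an arbitrary choice of distinct branch points in $\Pp^1(k)$) as their inertia canonical invariants.

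\textbf{Conclusion.} With $\mathcal R = \{L_1/k(T), L_2/k(T)\}$ one finds $\rho_{\mathcal R} = \min(r+1,\, 2\nu+r+1) = r+1$, while $\mathbf C_{L_2}$ exhibits the $\nu$ pairwise very different classes $C(c_1), \ldots, C(c_\nu)$, whence $\nu_{\mathcal R} \geq \nu \geq r+2 > r+1 = \rho_{\mathcal R}$. Criterion~\ref{crit:non-param-gen-criterion} then delivers conclusion~(**), and in particular the non-existence of a weakly $k(U)$-parametric, a fortiori $k(U)$-parametric, extension $F/k(T)$ of group $G$. The main conceptual obstacle I see is the pairwise-very-different claim for the $C(c_i)$: it rests essentially on the maximality of each $\langle c_i \rangle$. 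Everything else is a direct invocation of RET and a routine unpacking of the criterion.
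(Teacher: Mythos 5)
Your proof is correct and follows essentially the same route as the paper's: apply criterion~\ref{crit:non-param-gen-criterion} to $\mathcal R=\{L_1,L_2\}$ with $L_1$ realizing the minimal branch number $\mathrm{rk}(G)+1$ via a minimal generating tuple and $L_2$ built by RET so that $\mathbf C_{L_2}$ exhibits $\nu(G)$ pairwise very different classes (the paper simply takes $\mathbf C_{L_2}$ to contain \emph{all} nontrivial classes, whereas you take only the $\nu(G)$ maximal-cyclic-generator classes, which suffices). The only unstated detail is that $(h_1\cdots h_r)^{-1}\neq 1$, but this is automatic from the minimality of the generating set, so there is no gap.
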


\begin{proof} This directly follows from criterion \ref{crit:non-param-gen-criterion} applied with ${\mathcal R}$ consisting of two extensions $L_1/k(T)$ and $L_2/k(T)$ chosen so that  $r_{L_1}= {\rm rk}(G)+1$ and ${\bf C}_{L_2}$ contains all non trivial conjugacy classes of $G$. Such extensions exist thanks to the RET.
\end{proof}

As we check below, the groups in the following non exhaustive list satisfy
the condition $\nu(G)\geq {\rm rk}(G)+2$. 

\begin{corollary} \label{cor:non-param-overC}
Assume $k$ is algebraically closed. \hbox{None of these groups:}
\vskip 0,5mm

\noindent
- $S_n$, $n\geq 5$ and $A_n$, $n \geq 6$, 
\vskip 0,5mm


\noindent 
- non cyclic nilpotent groups $G$ with abelianization $G^{{\rm ab}}$ different from $\Zz/2\Zz \times \Zz/2\Zz$, 
in particular non cyclic nilpotent groups $G$ of odd order,
\vskip 0,5mm

\noindent 
-  linear groups ${\rm PSL}_2(\Ff_p)$, $p>7$ prime, 
\vskip 0,5mm

\noindent 
-  all sporadic simple groups, 
\vskip 0,5mm

\noindent
have a weakly $k(U)$-parametric extension $F/k(T)$. More precisely conclusion {\rm (**)} from criterion \ref{crit:non-param-gen-criterion} holds with $k=\Cc$ and ${\mathcal R}$ consisting of two extensions except for the groups $A_n$ for which three are needed.
\end{corollary}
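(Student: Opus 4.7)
The strategy is to deduce the corollary from Corollary~\ref{cor:criterion-overC} by verifying, for each family of groups on the list, the inequality $\nu(G)\geq {\rm rk}(G)+2$, with a refinement to three extensions in place of two for the alternating groups. Since all the groups listed are $2$-generated one has ${\rm rk}(G)=2$, so the arithmetic task reduces to exhibiting at least four pairwise non-conjugate maximal cyclic subgroups of $G$ in each case.

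I would proceed family by family. For the symmetric groups $S_n$ in the stated range, maximal cyclic subgroups are read off from cycle types: $\langle\sigma\rangle$ is maximal in $S_n$ precisely when $\sigma$ is not a proper power of another element of $S_n$, and cyclic subgroups generated by elements of distinct prime orders and by products of disjoint cycles of coprime length provide four pairwise non-conjugate classes. For the non-cyclic nilpotent groups with $G^{\rm ab}\neq(\Zz/2\Zz)^2$, one writes $G=\prod_p G_p$ as the product of Sylow subgroups; cyclic subgroups decompose componentwise, and the hypothesis on the abelianization together with non-cyclicity forces enough structure at the Frattini level to guarantee $\nu(G)\geq 4$. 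For ${\rm PSL}_2(\Ff_p)$ with $p>7$, the Dickson classification of maximal subgroups controls all cyclic subgroup conjugacy classes, and a direct count yields the required inequality. For the $26$ sporadic simple groups, $\nu(G)\geq 4$ can be read directly from the ATLAS character tables.

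The main obstacle is the case of the alternating groups. In $A_n$ parity constraints change the cyclic-subgroup landscape (for instance $A_6$ has no element of order~$6$, which makes the $3$-cycles maximal), and an enumeration of even cycle types still produces $\nu(A_n)\geq 4$. However, it may be impossible to find a single regular realization $L_2/k(T)$ of $A_n$ whose inertia canonical invariant $\mathbf{C}_{L_2}$ simultaneously contains representatives of all maximal cyclic subgroup classes: the Riemann Existence Theorem requires the chosen classes to admit a product-one tuple generating $A_n$, and these generation and product-one constraints can be incompatible with a single all-encompassing choice. The workaround is to apply Criterion~\ref{crit:non-param-gen-criterion} with $\mathcal{R}=\{L_1,L_2,L_3\}$, where $L_1$ realizes $A_n$ with the minimal branch point number $r=3$ and $L_2,L_3$ split the list of maximal cyclic subgroup classes between them; one then has $\rho_{\mathcal{R}}=3<\nu_{\mathcal{R}}$, and the criterion still yields conclusion~(**). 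The delicate point is to construct, for every $n\geq 6$, a splitting of the maximal cyclic subgroup classes into two RET-realizable tuples, which is done by an explicit Hurwitz-type construction.
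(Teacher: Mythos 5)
Your top-level strategy is the paper's: everything is funneled through corollary \ref{cor:criterion-overC} by verifying $\nu(G)\geq {\rm rk}(G)+2$, and for $S_n$ and the sporadic groups your sketch matches what the paper does (it exhibits four explicit pairwise very different classes, e.g. $[n^1]$, $[(n-1)^1]$, $[(n-2)^12^1]$, $[2^1]$ for $S_n$). But two of your family-by-family verifications would not go through as described. For the nilpotent case the paper does not use a Sylow or Frattini decomposition: it first gets ${\rm rk}(G)={\rm rk}(G^{\rm ab})$ from the Burnside basis theorem, then uses the inequality $\nu(G)\geq \nu(G^{\rm ab})$ to pass to the abelianization, and finally counts maximal cyclic subgroups of $\Zz/d_1\Zz\times\cdots\times\Zz/d_s\Zz$ explicitly (the tuples $(\varepsilon_1,\ldots,\varepsilon_{s-1},1)$ already give $d_1\cdots d_{s-1}$ pairwise distinct maximal cyclic subgroups). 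Your route cannot replace this: the Sylow decomposition is vacuous for $p$-groups, which are the essential case, and the Frattini quotient is too coarse --- for $G=\Zz/4\Zz\times\Zz/2\Zz$ one has $G/\Phi(G)=(\Zz/2\Zz)^2$ with only three maximal cyclic subgroups, whereas the abelianization itself has four. The reduction to $G^{\rm ab}$ is the missing key idea.

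Your rationale for the $A_n$ exception is also off target. Over $\Cc$ the Riemann Existence Theorem poses no obstruction to a single realization $L_2$ whose inertia invariant contains all nontrivial classes of a simple group: pick one element from each class and append the inverse of their product; generation is automatic since any nontrivial class of a simple group generates it. This is exactly how the proof of corollary \ref{cor:criterion-overC} constructs $L_2$, so no Hurwitz-type splitting into $L_2,L_3$ is needed for the reason you give; the paper's treatment of $A_n$ consists solely of exhibiting four pairwise very different classes (separate lists for $n$ odd and $n$ even). Finally, a warning on ${\rm PSL}_2(\Ff_p)$: a genuine direct count does \emph{not} deliver the inequality. Every element lies in a conjugate of the Sylow $p$-subgroup or of one of the two maximal tori, so there are exactly three conjugacy classes of maximal cyclic subgroups (of orders $p$, $(p-1)/2$, $(p+1)/2$) and $\nu=3<{\rm rk}(G)+2$. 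The paper asserts $\nu\geq 4$ for this family without detail; as it stands this family requires a separate argument (compare the special treatment it receives over $\Qq$ in corollary \ref{cor:non-param-overQ}), and your proposal inherits rather than repairs this difficulty.
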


On the other hand, all finite subgroups of ${\rm PGL}_2(\Cc)$ can be double-checked not to satisfy $\nu(G) \geq {\rm rk}(G)+2$ (which must also hold because they have a $\Cc(U)$-parametric extension $F/\Cc(T)$). The quaternion group ${\mathbb H}_8$ is another example. The complete list of groups satisfying the condition remains to be established. It seems that it contains most simple groups (and not just the last two categories of examples).

\begin{proof} We use the standard notation for the conjugacy classes of $S_n$: $[1^{\ell_1} \cdots  n^{\ell_n}]$ is the conjugacy class of elements of $S_n$ that write as a product of $\ell_1$ cycles of length $1$, ..., $\ell_n$ cycles of length $n$, all cycles having disjoint supports.

The symmetric groups $S_n$, $n \geq 5$, satisfy $\nu(G) \geq {\rm rk}(G)+2$. Indeed ${\rm rank}(S_n)=2$
and the $4$ conjugacy classes 
\vskip 1,5mm

\centerline{$[n^1]$, $[(n-1)^1]$, $[(n-2)^12^1]$, $[2^1]$} 
\vskip 1,5mm

\noindent
are pairwise very different.

So do the alternating groups $A_n$ with $n\geq 6$: note that ${\rm rank}(A_n)=2$ and 
use the classes 
\vskip 1,5mm

\centerline{$\left\{\begin{matrix}
& \hbox{$[n^1]$, $[(n-3)^12^1]$, $[(n-2)^11^2]$, $[(n-4)^1 1^4]$ if $n$ odd} \hfill \\
&\hbox{$[(n-1)^1]$, $[(n-2)^12^1]$, $[(n-3)^3 3^1]$, $[(n-5)^1 1^5]$ if $n$ even} \hfill \\
\end{matrix}\right.$
}
\vskip 1,5mm


For the second class of examples, we start with the case $G$ is abelian.
If $G$ of rank $s \geq 2$, it then writes $G=\Zz/d_1\Zz \times \cdots \Zz/d_s\Zz$ with $s\geq 2$ and $d_1 | d_2 | \cdots | d_s$ in $\Zz$. The $s$-tuples $(\varepsilon_1, \ldots, \varepsilon_{s-1},1)$ with $\varepsilon_i \in \Zz/d_1\Zz \times \cdots \Zz/d_{s-1}\Zz$ generate non-conjugate maximal cyclic subgroups of $G$. There are $d_1\cdots d_{s-1}$ such $s$-tuples, and so at least $s+2$ unless ($s=2$ and $d_1\in \{2,3\}$) or ($s=3$ and $d_1=d_2=2$). After checking separately the remaining special cases (use further 
non-conjugate maximal cyclic subgroups {\it e.g.} those generated by $s$-tuples $(\varepsilon_1, \ldots, \varepsilon_{s-1},k)$ with $k\in (\Zz/d_s\Zz)^\times$), conclude that $\nu(G)\geq {\rm rk}(G)+2$ unless $G = \Zz/2\Zz \times \Zz/2\Zz$.

Assume now more generally that $G$ is nilpotent. From the Burnside basis theorem, $G$ and its abelianization $G^{{\rm ab}}$ have the same rank. On the other hand, we have $\nu(G) \geq \nu(G^{{\rm ab}})$. If $G$ is non cyclic then so is $G^{{\rm ab}}$. If $G^{{\rm ab}}$ is  further assumed to be different from $\Zz/2\Zz \times \Zz/2\Zz$, then from the preceding case, we have $\nu(G^{{\rm ab}})\geq {\rm rk}(G^{{\rm ab}})+2$. Inequality $\nu(G)\geq {\rm rk}(G)+2$ follows.

All finite simple groups have rank $2$ and their well-known classification shows that many of them have at least $4$ non-conjugate maximal cyclic subgroups of $G$, including all groups ${\rm PSL}_2(\Ff_p)$ ($p>7$ prime), 
all sporadic simple groups.
\end{proof}

\begin{remark}  Depending on the problem and the situation, the general method can be used differently and leads to variants of corollary \ref{cor:criterion-overC}. Here is an example:
\vskip 1mm

\noindent
(*) {\it If $N$ is the largest integer $< \nu(G)/({\rm rk}(G)+1)$, there do not exist $N$ Galois extensions $F_1/\Cc(T),\ldots,F_N/\Cc(T)$ of group $G$ such that every extension $L/\Cc(U)$ of group $G$ is a specialization $(F_{i})_{T_0}/\Cc(U)$ of some $F_i/\Cc(T)$, $i=1,\ldots,N$ {\rm (}for some $T_0\in \Cc(U)${\rm )}.}
\vskip 1mm

\noindent
{\it Proof}. Let $g_1,\ldots,g_{\nu(G)}$ be generators of $\nu(G)$ non-conjugate maximal cyclic subgroups and let $C_1,\ldots,C_{{\nu(G)}}$ be their conjugacy classes.
For $i=1,\ldots,\nu(G)$, construct a Galois extension $L_i/\Cc(T)$ such that $C_i$ appears in ${\bf C}_{L_i}$, $r_{L_i}={\rm rk}(G)+1$, and in such a way that the constructed extensions are distinct; if two extensions happen to be equal in a first stage, compose one with a non-trivial automorphism of 
$\Cc(T)$. Assume that there exist $N$ Galois extensions $F_1/\Cc(T)$, $\ldots$, $F_N/\Cc(T)$ satisfying the conclusion of the claim. Then there is an index $i\in \{1,\ldots,N\}$ such that $F_i/\Cc(T)$ specializes to at least $\nu(G)/N$ of the constructed extensions $L/\Cc(T)$. If ${\mathcal R}$ is the set of these extensions $L/\Cc(T)$,
we have $\rho_{\mathcal R} = {\rm rk}(G)+1$ and criterion \ref{crit:non-param-gen-criterion} can be applied with 
$\nu_{\mathcal R} \geq \nu(G)/N$; this gives $\nu(G)/N \leq {\rm rk}(G)+1$ and so $N\geq \nu(G)/({\rm rk}(G)+1)$.
\vskip 1mm

\end{remark}

\subsubsection{Groups with no $\Qq(U)$-parametric extension} \label{ssec:examples-non-param}

Here we apply criterion \ref{crit:non-param-gen-criterion} over a non-algebraically closed field $k$. 

\begin{corollary} \label{cor:non-param-overQ}
Let $k$ be a subfield of $\Cc$. \hbox{None of the groups}
\vskip 0,5mm

\noindent
- $S_n$, $n\geq 5$ and $n\not=6$ and $A_n$, $n \geq 7$, 
\vskip 0,5mm

\noindent 
- ${\rm PSL}_2(\Ff_p)$ with $p$ a prime such that $(\frac{2}{p})=(\frac{3}{p})=-1$, 
\vskip 0,5mm

\noindent 
- the Fischer-Griess Monster $M$,

\noindent
have a weakly $k(U)$-parametric extension $F/k(T)$  and a fortiori they do not have a $k(U)$-parametric extension. More precisely conclusion {\rm (**)} from criterion \ref{crit:non-param-gen-criterion} holds with $k=\Qq$ and ${\mathcal R}$ consisting of two extensions except the alternating groups $A_n$ for which three are needed.
\end{corollary}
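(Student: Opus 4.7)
The plan is to apply Criterion~\ref{crit:non-param-gen-criterion} along the same lines as in the proof of Corollary~\ref{cor:non-param-overC}, with the added constraint that every $L\in\mathcal R$ be $\Qq$-regular. Since $\Qq\subset k$, any $\Qq$-regular realization of $G$ is automatically $k$-regular, so it suffices to exhibit the requisite $\mathcal R\subset\mathcal R_\Qq(G)$. In all four families $\mathrm{rk}(G)=2$, so the smallest possible branch point number of a realization is~$3$; the goal is to produce a few $\Qq$-regular three-branch-point realizations whose combined inertia canonical invariants carry at least $4$ pairwise very different conjugacy classes, so that $\nu_\mathcal R\geq 4>3\geq\rho_\mathcal R$.

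For $G=S_n$ with $n\geq 5$, $n\neq 6$, I would reuse the four pairwise very different cycle-type classes $[n^1]$, $[(n-1)^1]$, $[(n-2)^1 2^1]$, $[2^1]$ from the proof of Corollary~\ref{cor:non-param-overC}: since cycle-type classes of $S_n$ are all $\Qq$-rational, classical rigidity criteria of Belyi--Fried--Matzat type yield two $\Qq$-regular three-branch-point realizations $L_1,L_2$ of $S_n$ whose inertia tuples together cover these four classes. For $G=A_n$ with $n\geq 7$, the analogous four very different classes of $A_n$ listed in the proof of Corollary~\ref{cor:non-param-overC} would be spread over three $\Qq$-regular three-branch-point realizations instead of two, since the product-one condition $g_1g_2g_3=1$ inside $A_n$, combined with the rigidity and rationality conditions, prevents their being compressed into only two triples. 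The exclusions $n=6$ (for $S_n$) and $n\leq 6$ (for $A_n$) would be traced to the failure of the descent-from-$\Cc$ arguments in that low-rank range.

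For $G=\mathrm{PSL}_2(\Ff_p)$ with $p$ prime and $\bigl(\tfrac{2}{p}\bigr)=\bigl(\tfrac{3}{p}\bigr)=-1$, I would invoke Shih's construction, which under precisely these quadratic-residue hypotheses supplies a $\Qq$-regular three-branch-point realization of $G$ with inertia of orders $2,3,p$, and combine it with a second Shih-type realization contributing a class of an element of order $(p\pm 1)/2$; the four resulting classes generate pairwise non-conjugate maximal cyclic subgroups of $\mathrm{PSL}_2(\Ff_p)$, hence are pairwise very different. For $G=M$ I would use Thompson's rigidity realization of the Monster with inertia class-triple $(2\mathrm A,3\mathrm B,29\mathrm A)$, supplemented by a second $\Qq$-rational rigid three-branch-point triple from the Atlas that introduces a fourth class very different from each of $2\mathrm A$, $3\mathrm B$, $29\mathrm A$.

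The main obstacle, and what makes this corollary genuinely more subtle than its $k=\Cc$ counterpart, is the existence of the required $\Qq$-regular three-branch-point realizations with prescribed inertia canonical invariants: this is the essential content of constructive inverse Galois theory and is handled family by family (rigidity of cycle-type classes for $S_n$ and $A_n$; Shih's theorem for $\mathrm{PSL}_2(\Ff_p)$ under the stated quadratic-residue conditions; Thompson's theorem for the Monster). Once these realizations are in place, the combinatorial inequality $\nu_\mathcal R\geq 4>3=\rho_\mathcal R$ is a verbatim transcription of what is already done in the proof of Corollary~\ref{cor:non-param-overC}.
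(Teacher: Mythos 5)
Your plan works essentially as the paper does for $S_n$: two $\Qq$-regular three-branch-point realizations whose inertia tuples together contain the four pairwise very different cycle types ($n$-cycle, $(n-1)$-cycle, $(n-2,2)$-type, transposition) give $\nu_{\mathcal R}\geq 4>3\geq\rho_{\mathcal R}$, and the criterion applies directly. For $A_n$ the spirit is right, but the four classes used in the proof over $\Cc$ (e.g.\ $[(n-2)^11^2]$, $[(n-4)^11^4]$) are not the ones known to occur as inertia of $\Qq$-regular three-point realizations; one has to work with the specific families of rigid rational triples actually available (types $([n^1],[n^1],[m^1((n-m)/2)^2])$, $([m^1(n-m)^1],[m^1(n-m)^1],[(n/2)^2])$, etc.) and check that three of them supply four pairwise very different classes.

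The genuine gap is in the ${\rm PSL}_2(\Ff_p)$ and Monster cases, where the direct application of Criterion~\ref{crit:non-param-gen-criterion} that you propose does not go through. First, Shih's construction under $(\frac{2}{p})=(\frac{3}{p})=-1$ does not give a realization with inertia orders $2,3,p$: it gives realizations with inertia canonical invariants $(2A,pA,pB)$ and $(3A,pA,pB)$. Crucially, $pA$ and $pB$ are \emph{not} very different (the nontrivial powers of an element of order $p$ fill up $pA\cup pB$, so both classes lie in $pA^{\Zz}$), so the union of the two tuples contains only \emph{three} pairwise very different classes and $\nu_{\mathcal R}=3=\rho_{\mathcal R}$ fails the strict inequality. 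The paper therefore cannot invoke the criterion and instead argues ad hoc: if ${\bf C}_F=(C_1,C_2,C_3)$ dominates both tuples, two of the $C_i$ must be $pA,pB$, forcing $2A$ and $3A$ to both be powers of the remaining $C_1$, which contradicts $2A\verydiff 3A$. Your proposed "second realization contributing a class of order $(p\pm1)/2$" is not something Shih's theorem provides, and you give no source for it. Similarly for $M$: the paper pairs Thompson's $(2A,3B,29A)$ with the rigid triple $(2A,3C,38A)$, and again the criterion is not applied directly; instead one shows that any dominating triple ${\bf C}_F$ must have ramification indices essentially $(38,\,29\hbox{ or }87,\,\hbox{multiple of }3)$, whence $\varepsilon<1$ and the quantitative lower bound of Theorem~\ref{thm:r-in-spec}~(b-1) forces every specialization to have more than $3$ branch points. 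Your proposal of "a fourth class very different from each of $2A,3B,29A$" is not exhibited and is not how the conclusion is actually reached; without these supplementary arguments the two hardest cases of the corollary remain unproved.
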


The list is not exhaustive. This corollary is meant to show on examples how to apply criterion \ref{crit:non-param-gen-criterion} and how in some situations where it cannot be applied directly, one can still get the desired conclusion.


\begin{proof} Take $G=S_n$, $n \geq 5$, $n\not=6$. Assume first $n$ is odd.
There are $\Qq$-regular realizations $L_1/\Qq(T)$, $L_2/\Qq(T)$ of $S_n $ with $r_{L_1}=r_{L_2}=3$ and 
\vskip 2mm

\centerline{${\bf C}_{L_1} = ([n^1], [(n-1)^11^1], [2^11^{n-2}])$ \& ${\bf C}_{L_2} = ([n^1], [(n-2)^12^1], [2^11^{n-2})$}
\vskip 2mm

\noindent
(see \cite{schinzel-book2000}, \cite[B-3]{legrand-thesis}). 
Hence for ${\mathcal R} = \{L_1/\Qq(T), L_2/\Qq(T)\}$, we have $\rho_{\mathcal R} \leq 3$ and one can take
$\nu_{\mathcal R} \geq 4$ in criterion \ref{crit:non-param-gen-criterion}. Conclude that (**) is satisfied for this ${\mathcal R}$. In particular, $S_n$ has no weakly $k(U)$-parametric extension $F/k(T)$.  The case $n$ is even is similar; $L_2/\Qq(T)$ should be changed to have ${\bf C}_{L_2} = ([n^1], [(n-m)^1m^1], [2^11^{n-2})$ for some integer $m$ such that $1\leq m\leq n$ and $(m,n)=1$.

Take $G=A_n$, $n \geq 7$. The group $A_n$ is known to have $\Qq$-regular realizations with the following
inertia canonical invariant \cite[B-3]{legrand-thesis}:

\vskip 1mm
\noindent
if $n$ is even:
\vskip 0,5mm

\centerline{$([m^1(n-m)^1],[m^1(n-m)^1],[(n/2)^2])$ with $1\leq m\leq n$, $(m,n)=1$}

\vskip 1mm
\noindent
if $n$ is odd:
\vskip 1mm

\centerline{$\begin{matrix}
&\hbox{$([n^1],[n^1],[m^1((n-m)/2)^2])$ with $m$ odd, $1\leq m\leq n$, $(m,n)=1$,} \hfill \\
&\hbox{$ ([n^1],[n^1],[(m/2)^2(n-m)^1])$ with $m$ even, $1\leq m\leq n$, $(m,n)=1$} \hfill \\
\end{matrix}$
}
\vskip 1,5mm

%
%

\noindent
One checks that for every $n\geq 7$, one can always find three such realizations with four pairwise very different conjugacy classes in the union of the three inertia canonical invariants.
Criterion \ref{crit:non-param-gen-criterion} concludes that the proof in this case.

Take $G={\rm PSL}_2(\Ff_p)$ with $p$ a prime such that $(\frac{2}{p})=-1$ and $(\frac{3}{p})=-1$.
\cite[\S 8.3.3]{Serre-topics} gives two $\Qq$-regular realizations $L_1/\Qq(T)$ and $L_2/\Qq(T)$ of $G$ with $r_{L_1}=r_{L_2}=3$ and 
\vskip 2mm

\centerline{${\bf C}_{L_1} =(2A,pA,pB)$ \& ${\bf C}_{L_2} = (3A,pA,pB)$}
\vskip 2mm

\noindent
where $2A$ (resp. $3A$) is the unique conjugacy class of ${\rm PSL}_2(\Ff_p)$ of order $2$ (resp. of order $3$) and $pA, pB$ are the two conjugacy classes of order $p$.
Hence for ${\mathcal R} = \{L_1/\Qq(T), L_2/\Qq(T)\}$, we have $\rho_{\mathcal R} \leq 3$.

According to \cite[corollary 2.7]{guest-at-al}, 
the maximal order of an element of ${\rm PSL}_2(\Ff_p)$ is $p+1$, so the conjugacy classes $pA$ and $pB$ are classes of generators of maximal cyclic subgroups. It follows that $2A \verydiff pA$, 
$2A \verydiff pB$, $3A \verydiff pA$, $3A \verydiff pB$. Furthermore $2A\verydiff 3A$: indeed otherwise both classes would be contained in the conjugacy class of a cyclic subgroup $\langle \gamma_0 \rangle\subset {\rm PSL}_2(\Ff_p)$ of order $6$. But then $\gamma_0 \in 3A$ or $ (-\gamma_0)\in 3A$ and so $2A \subset (3A)^\Zz$ or $2A \subset (-3A)^\Zz$ -- a contradiction.

However $pA$ and $pB$ are not very different and criterion \ref{crit:non-param-gen-criterion} cannot be applied directly. We use instead the following argument. Assume there is a $k$-regular extension $F/k(T)$ such that  
$F\Cc/\Cc(T)$ specializes to $L_1\Cc/\Cc(T)$ and $L_2\Cc/\Cc(T)$.
From above we have $r_F= 3$ and, for ${\bf C}_F=(C_1,C_2,C_3)$, there should exist integers $a_i,b_i,c_i \in \Zz$, $i=1,2$, such that 
$(C_1^{a_1},C_2^{b_1},C_3^{c_1}) = (2A,pA,pB)$ and $(C_1^{a_2},C_2^{b_2},C_3^{c_2}) = (3A,pA,pB)$.
Necessarily $C_2,C_3 \in \{pA,pB\}$, $2A \subset C_1^\Zz$ and $3A \subset C_1^\Zz$. This contradicts $2A\verydiff 3A$.

Finally take $G=M$ the Fischer-Griess Monster. We will use two known $\Qq$-regular realizations $L_1/\Qq(T)$ and 
$L_2/\Qq(T)$ of $G$, for which $r_{L_1}=r_{L_2}=3$ and 
\vskip 1mm

\centerline{${\bf C}_{L_1} =(2A,3B,29A)$  and  ${\bf C}_{L_2} = (2A,3C,38A)$}
\vskip 1mm

\noindent
(where we use the standard notation from the Atlas of simple groups for the conjugacy classes of $M$). The extension $L_1/\Qq(T)$ is the one originally produced by J. Thompson \cite{thompson}; the main point is that  
${\bf C}_{L_1}$ is a ``rigid triple''. Computer programs now exist to find other rigid tuples. The triple ${\bf C}_{L_2}$ was communicated to me by J. Koenig who checked that it is rigid, assuming that the current classification of all (certain and hypothetical) maximal subgroups of $M$ is correct.


Assume there is a $k$-regular extension $F/k(T)$ such that  
$F\Cc/\Cc(T)$ specializes to $L_1\Cc/\Cc(T)$ and $L_2\Cc/\Cc(T)$. Then we have $r_F= 3$. Set ${\bf C}_F=(C_1,C_2,C_3)$. From the Atlas of simple groups, there is only one conjugacy class, $38A$, whose elements are of order a multiple of $38$ (and this multiple is $38$) and there are three conjugacy classes, $29A$, $87A$ and $87B$, whose elements are of order a multiple of $29$ (and these multiples are $29$, $87$ and $87$). One of $C_1,C_2,C_3$, say $C_1$, must be $38A$ and one, say $C_2$, should be $29A$ or $87A$ or $87B$. Furthermore $3B$ and $3C$ are not a power of $87A$ or $87B$. This leads to these possibilities for the triple ${\bf C}$ of ramification indices of $F/\Qq(T)$: 
\vskip 1mm

\centerline{${\bf C} = (38A,29A,C_3)$ or ${\bf C} = (38A,87A,C_3)$ or ${\bf C} = (38A,87B,C_3)$}
\vskip 1mm

\noindent
with $C_3$ of order divisible by $3$.
But then the lower bound for the number $r_{T_0}$ of branch points of a specialization $F_{T_0}/\Qq(T)$ with $T_0\in \Qq(T)$ given in theorem \ref{thm:r-in-spec} (b-1) 
gives $r_{T_0} >3$ 
and so neither $L_1/\Qq(T)$ nor $L_2/\Qq(T)$ can be a specialization of an extension $F/\Qq(T)$ with inertia canonical invariant ${\bf C}$.
\end{proof}

\subsection{Non $\Qq$-parametric extensions $F/\Qq(T)$} \label{ssec:non-Q-param-extensions} 
Assume that $k$ is a number field. 

\subsubsection{Main tool for producing non $k$-parametric extensions $F/k(T)$}

\begin{theorem} \label{thm:twist-main} Let $F/k(T)$ and $L/k(T)$ be two $k$-regular Galois extensions with respective groups $G$ and $H$ such that $H\subset G$. There exist polynomials 
$\widetilde P_1,\ldots, \widetilde P_N \in k[U,T,Y]$
with the following properties:
\vskip 0,8mm

\noindent
{\rm (a)} $\widetilde P_i$ is monic in $Y$, $\deg_Y(\widetilde P_i) = |G|$ and the splitting field over $\overline{k(U)}(T)$ of $\widetilde P_i$ is the extension $F\overline{k(U)}/\overline{k(U)}(T)$, $i=1,\ldots,N$,
\vskip 1,3mm

%
%

\noindent
{\rm (b)} For every field $K$ with $k\subset K \subset \Cc(U)$, for all $u_0\in K$ but finitely many in $\overline k$, and for all $t_0\in K$ not a branch point of $F/k(T)$, the specialization $L_{u_0}/K$ is $K$-isomorphic to the specialization $F_{t_0}/K$ iff for some $i\in \{1,\ldots,N\}$, there exists $y_0\in K$ such that $\widetilde P_i(u_0,t_0,y_0)=0$.
\end{theorem}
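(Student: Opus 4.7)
The plan is to globalize the classical twisting construction of Galois theory by treating $L/k(T)$ as a family $\{L_u/k\}_{u\in \Aa^1_k}$ of specialized extensions and twisting $F/k(T)$ by the entire family at once.

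First I would set up a Galois extension of $k(U)(T)$ with group $G\times H$. Let $F' := F\cdot k(U)$, a $k(U)$-regular Galois extension of $k(U)(T)$ with group $G$, and let $L^\circ/k(U)$ denote the extension obtained from $L/k(T)$ by renaming $T\mapsto U$, so that $L' := L^\circ\cdot k(U,T)$ is a constant-in-$T$ Galois extension of $k(U,T)$ with group $H$. The $k$-regularity of $F/k(T)$ gives $F'\cap \overline{k(U)}(T) = k(U)(T)$, whereas $L'\subset \overline{k(U)}(T)$ since $L^\circ$ is algebraic over $k(U)$; hence $F'$ and $L'$ are linearly disjoint over $k(U)(T)$, and their compositum $N$ is Galois over $k(U)(T)$ with group $G\times H$. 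For a finite list of injective homomorphisms $\phi_1,\dots,\phi_N : H\hookrightarrow G$, chosen to hit every potential identification between a decomposition quotient of $F$ and one of $L^\circ$, form the diagonal subgroups $\Delta_{\phi_i} := \{(\phi_i(h), h) : h\in H\}\subset G\times H$ and set $M_i := N^{\Delta_{\phi_i}}$, an extension of $k(U)(T)$ of degree $|G|$. Pick for each $i$ a primitive element $y_i$ of $M_i/k(U)(T)$ integral over $k[U,T]$; its minimal polynomial $\widetilde P_i(U,T,Y)\in k[U,T,Y]$ is monic in $Y$ of degree $|G|$. Property (a) then follows since $L'\cdot \overline{k(U)}(T) = \overline{k(U)}(T)$ (as $L^\circ\subset \overline{k(U)}$), so $N\cdot \overline{k(U)}(T) = F\cdot \overline{k(U)}$ with Galois group $G\times\{1\}$, and $\Delta_{\phi_i}\cap (G\times\{1\}) = \{(1,1)\}$ forces $M_i\cdot \overline{k(U)}(T) = F\cdot \overline{k(U)}$.

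For property (b), the key input is the twisting lemma of \S \ref{sec:preliminaries} in its relative form. Specializing $U\mapsto u_0\in K$ and $T\mapsto t_0\in K$, the $K$-fiber of $M_i$ above $(u_0, t_0)$ models the classical twist of $F\cdot K(T)$ by $L_{u_0}/K$ along $\phi_i$, provided $u_0$ avoids a finite ``bad'' set $S\subset\overline k$ (branch points of $L^\circ/k(U)$ together with bad reduction loci of the covers $M_i\to \Pp^1_{k(U)}$). The twisting lemma then yields: $F_{t_0}/K\cong L_{u_0}/K$ iff for some $i$ this specialized twist has a $K$-rational point above $t_0$, i.e., iff $\widetilde P_i(u_0, t_0, Y)$ has a root $y_0\in K$. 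The algebraic content is that a $K$-rational point forces the decomposition group of $N$ at a place above $(u_0, t_0)$ to be contained in $\Delta_{\phi_i}$, hence to have graph form $\{(d, \sigma(d)) : d\in D\}$ with $\sigma$ inducing, via $\phi_i$, a $K$-identification of $F_{t_0}$ and $L_{u_0}$ inside $\overline K$.

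The main obstacles are (i) identifying $S$ precisely so that good specialization holds outside it, which uses the good-reduction material collected in the appendix \S \ref{sec:preliminaries}, and (ii) choosing the list $\phi_1,\dots,\phi_N$ correctly so that every potential $K$-isomorphism $L_{u_0}\cong F_{t_0}$ is detected by at least one $\phi_i$: the count $N$ is essentially the number of injections $H\hookrightarrow G$ up to $G$-conjugation, but one must verify that this suffices for the ``only if'' direction, i.e., that the ``graph'' decomposition group arising from an actual isomorphism always lands in one of the prescribed $\Delta_{\phi_i}$. A further subtle point is checking that the specialization of $M_i$ at $U = u_0$ really coincides, up to $K(T)$-isomorphism, with the absolute twist of $F\cdot K(T)$ by $L_{u_0}/K$ along $\phi_i$; this compatibility between the diagonal-subgroup construction and the decomposition-theoretic description of the twist is where most of the technical work will lie.
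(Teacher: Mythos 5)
Your construction is, up to presentation, the same as the paper's: the fixed field $M_i$ of the graph subgroup $\Delta_{\phi_i}$ inside the $G\times H$-compositum $N=Fk(U)\cdot L^{\circ}(T)$ is exactly the twisted extension $\widetilde{\mathcal F}^{\varphi}/k(U)(T)$ of \S\ref{ssec:twisting} (with $\varphi$ a $\Gabs_{k(U)}$-representation of $L^{\circ}/k(U)$, suitably composed with an identification of $H$ with a subgroup of $G$); the paper merely encodes the diagonal quotient through the left/right regular representations, and indexes the finitely many twists by $\chi\in{\rm Aut}(G)$ where you use embeddings $H\hookrightarrow G$ up to conjugation (your indexing is if anything the more careful one when $H\subsetneq G$). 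Part (a) and the pointwise application of the twisting lemma \ref{prop:twisted cover} go through as you describe. The one step you explicitly defer --- that the specialization at $U=u_0$ of the family-twisted extension is $k(T)$-isomorphic to the twist of ${\mathcal F}|_{u_0}/k(T)$ by $L_{u_0}/k$ --- is not a routine compatibility check: it is precisely theorem \ref{thm:main}, and it carries essentially all of the technical weight of \S\ref{sec:twisting}. Its proof needs the Grothendieck good reduction theorem together with its addendum (the specialization isomorphism ${\rm sp}_{u_0}$ between geometric fundamental groups) to compare the geometric parts, and an Eisenstein-theorem argument on power-series expansions of a primitive element to show that the arithmetic action satisfies $\phi|_{u_0}({\sf s}_{u_0}(\tau))=\phi({\sf s}_U\circ{\sf s}_{u_0}(\tau))$ outside a finite set of $u_0\in\overline k$; this is also where the finite exceptional set of the statement is actually produced. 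So your proposal is a correct outline of the paper's own route, but the decisive lemma is named rather than proved.
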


Theorem \ref{thm:twist-main} is proved in \S \ref{sec:twisting}.


\subsubsection{The working hypothesis} \label{ssec:working-hypothesis}
We will deduce some non $k$-parametric conclusions from theorem \ref{thm:twist-main} under this
 ``working hypothesis'':

\vskip 1,5mm

\noindent
(WH) {\it Given a number field $k$ 
and polynomials $P_1,\ldots,P_N \in k[U,T,Y]$ irredu\-cible in $\overline{k}[U,T,Y]$, we have the following: if none of the equations \hbox{$P_i(U,t,y)=0$} has a solution $(T_0,Y_0)\in \Cc(U)^2$, then for infinitely many $u_0\in k$, none of the equations $P_i(u_0,t,y)=0$ has a solution
$(t_0,y_0)\in k^2$ {\rm (}$i=1,\ldots,N${\rm )}.}
\vskip 1,5mm

We comment on this hypothesis below in \S \ref{ssec:comments-WH}.

\subsubsection{Main conclusions} 
We first explain how we combine our working hypothesis and theorem \ref{thm:twist-main}.


\begin{proposition} \label{cor:Q-param-extensions}
Assume {\rm (WH)} holds and let $k$ be a number field 
and $F/k(T)$ be a $k$-regular Galois extension of group $G$.
\vskip 0,5mm

\noindent
{\rm (a)} If a $k$-regular Galois extension $L/k(U)$ 
of group $H\subset G$ is such that $L\Cc/\Cc(U)$ is not a specialization of $F/k(T)$ 
 at any $T_0\in \Cc(U)$, 
there are infinitely many $u_0\in k$ such that the specialization 
$L_{u_0}/k$ of $L/k(U)$ is not a specialization of $F/k(T)$ at any unbranched $t_0\in k$.
\vskip 1mm

\noindent
{\rm (b)} If $F/k(T)$ is $k$-parametric then it is weakly $k(U)$-parametric.

\vskip 1mm

\noindent
{\rm (c)} Every group with no weakly $k(U)$-parametric extension has no $k$-parametric extension.
\end{proposition}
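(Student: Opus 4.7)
The three statements cascade: the content is in (a); (b) follows from (a) by contrapositive together with the definition of $k$-parametricity, and (c) is the plain restatement of (b) as a nonexistence claim.

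For (a), I would sandwich (WH) between two applications of Theorem~\ref{thm:twist-main}(b). First, Theorem~\ref{thm:twist-main} supplies the polynomials $\widetilde P_1,\dots,\widetilde P_N\in k[U,T,Y]$. Applying Theorem~\ref{thm:twist-main}(b) with $K=\Cc(U)$ at $u_0=U$---which lies in $K$ but not in $\overline k$, hence outside the finite exceptional set---one has $L_U/\Cc(U)=L\Cc/\Cc(U)$. The hypothesis of (a), that $L\Cc/\Cc(U)$ is not a specialization of $F/k(T)$ at any $T_0\in\Cc(U)$, therefore translates into: none of the equations $\widetilde P_i(U,T,Y)=0$ has a solution $(T_0,Y_0)\in\Cc(U)^2$ with $T_0$ not a branch point of $F/k(T)$. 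Since the branch points form a finite subset of $\overline k$, the potential ``branched'' solutions contribute only finitely many $T_0$-values and finitely many associated $Y_0(U)\in\Cc(U)$, and can be discarded; the same absence of $\Cc(U)^2$-solutions then holds for every absolutely irreducible factor in $\overline k[U,T,Y]$ of each $\widetilde P_i$. Feeding these factors into (WH)---after a $\Gal(\overline k/k)$-orbit grouping so that the polynomials fed in lie in $k[U,T,Y]$ and are absolutely irreducible---extracts infinitely many $u_0\in k$ for which none of these factors, and hence none of the $\widetilde P_i(u_0,T,Y)$, has a solution $(t_0,y_0)\in k^2$. A second application of Theorem~\ref{thm:twist-main}(b), now with $K=k$ at each such $u_0$ (avoiding the finitely many exceptional values in $\overline k$), yields the conclusion: $L_{u_0}/k$ is not $k$-isomorphic to any $F_{t_0}/k$ with $t_0\in k$ unbranched.

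For (b), I would argue by contrapositive. Assume $F/k(T)$ is $k$-parametric, and let $L/k(U)$ be an arbitrary $k$-regular Galois extension of group equal to $G$. If $L\Cc/\Cc(U)$ were not a specialization of $F/k(T)$ at any $T_0\in\Cc(U)$, part (a) would produce infinitely many $u_0\in k$ for which $L_{u_0}/k$ is not $k$-isomorphic to any $F_{t_0}/k$ with $t_0\in k$ unbranched. But $L_{u_0}/k$ is a Galois extension of $k$ whose group is a subgroup of $G$, so the $k$-parametricity of $F/k(T)$ forces $L_{u_0}/k\cong F_{t_0}/k$ for some unbranched $t_0\in k$, a contradiction. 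Statement (c) is then the immediate restatement of (b) as a nonexistence statement for $k$-parametric realizations.

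\textbf{Main obstacle.} The substantive input is the use of (WH). Since (WH) is formulated for polynomials in $k[U,T,Y]$ that are already absolutely irreducible, whereas the $\widetilde P_i$ supplied by Theorem~\ref{thm:twist-main} need not have this property, one must factor them over $\overline k[U,T,Y]$ and organize the resulting factors into $\Gal(\overline k/k)$-orbits so that (WH) applies cleanly. A related minor subtlety is that Theorem~\ref{thm:twist-main}(b) gives its ``iff'' only at unbranched $t_0$, so any $\Cc(U)^2$-solutions of some $\widetilde P_i$ arising from branched $T_0$ do not reflect a genuine specialization-isomorphism and must be excluded by hand; these are finite in number and cause no difficulty.
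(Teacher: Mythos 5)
Your argument is the paper's own, step for step: Theorem \ref{thm:twist-main}(b) applied over $K=\Cc(U)$ at $u_0=U$ turns the hypothesis of (a) into the absence of $\Cc(U)^2$-points on the $\widetilde P_i$, (WH) transfers this to infinitely many fibres $u_0\in k$, and Theorem \ref{thm:twist-main}(b) applied over $K=k$ at those $u_0$ gives the conclusion; (b) and (c) are then formal, exactly as you say. Two comments on what you call the main obstacle. (1) The absolute-irreducibility worry is vacuous: each $\widetilde P_i$ is an affine equation of a $k(U)$-regular degree-$|G|$ extension of $k(U)(T)$ (a twisted extension $\widetilde{\mathcal F}^{\chi\circ\varphi}$), hence irreducible in $\overline{k(U)}[T,Y]$, and since it is monic in $Y$ this already forces irreducibility in $\overline{k}[U,T,Y]$; so (WH) applies to the $\widetilde P_i$ as they stand. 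Your proposed repair would in fact backfire: the product over a $\Gal(\overline k/k)$-orbit of absolutely irreducible factors lies in $k[U,T,Y]$ but is \emph{not} irreducible in $\overline{k}[U,T,Y]$ unless the orbit is a singleton, so it fails the hypothesis of (WH). (2) The branch-point caveat is real: Theorem \ref{thm:twist-main}(b) is an equivalence only at unbranched $t_0$, so the hypothesis of (a) only excludes solutions $(T_0,Y_0)\in\Cc(U)^2$ with $T_0$ outside the finite (constant) branch locus, whereas (WH) asks for no solutions at all. The paper's proof passes over this in silence too, but ``can be discarded'' is not yet an argument: if some $\widetilde P_i(U,t_j,Y)$ had a root in $\Cc(U)$ for a branch point $t_j$, (WH) would simply be inapplicable to that $\widetilde P_i$; one must either verify that such roots do not occur or invoke (WH) for the polynomials with the fibres over the $t_j$ removed. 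Since the published proof needs the identical patch, this does not separate your proposal from the paper's.
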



\begin{corollary} \label{cor:Q-param-extensions2}
Assume {\rm (WH)} holds and let $k$ be a number field. Every group as in  corollary \ref{cor:non-param-overQ}
has no \hbox{$k$-parametric extension.}  
\end{corollary}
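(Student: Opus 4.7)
The plan is to chain together two results already established in the paper. For each group $G$ in the stated list --- the $S_n$ with $n\geq 5$, $n\neq 6$, the $A_n$ with $n\geq 7$, the ${\rm PSL}_2(\Ff_p)$ with $(\frac{2}{p})=(\frac{3}{p})=-1$, and the Monster $M$ --- Corollary \ref{cor:non-param-overQ} has already furnished the stronger unconditional conclusion that $G$ admits no weakly $k(U)$-parametric extension $F/k(T)$ for any subfield $k\subset\Cc$; in particular for $k$ a number field, which is the setting of the present corollary. That part of the work rests on criterion \ref{crit:non-param-gen-criterion} applied to a small explicit set ${\mathcal R}$ of known $k$-regular realizations of $G$ whose collected inertia tuples contain enough pairwise very different conjugacy classes to defeat the comparison theorem \ref{thm:order-invariants}.

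The second ingredient is Proposition \ref{cor:Q-param-extensions}(c), which, under (WH), states that any group without a weakly $k(U)$-parametric extension has no $k$-parametric extension either. Combining the two implications yields the corollary at once. Equivalently, one can argue by contradiction using part (b) of the same proposition: were $F/k(T)$ a $k$-parametric extension of one of the listed groups $G$, then (WH) combined with the twisted polynomials $\widetilde P_F^L(U,T,Y)$ of theorem \ref{thm:twist-main} would promote parametricity over $k$ to weak parametricity over $k(U)$, contradicting Corollary \ref{cor:non-param-overQ}.

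The main obstacle is already absorbed into the proofs of the two invoked results (most significantly into criterion \ref{crit:non-param-gen-criterion} via theorem \ref{thm:order-invariants} on the geometric side, and into the twisted polynomial construction of theorem \ref{thm:twist-main} combined with (WH) on the arithmetic side). Here one only has to verify that the interfaces match: the ``weakly $k(U)$-parametric'' notion appearing at the conclusion of Corollary \ref{cor:non-param-overQ} is precisely the hypothesis of Proposition \ref{cor:Q-param-extensions}(c), and the latter is stated for an arbitrary number field $k$ and so applies verbatim in the setting at hand. With this matching in place the corollary is a one-line consequence of the chain (WH) plus ``no weakly $k(U)$-parametric extension'' $\Rightarrow$ ``no $k$-parametric extension''.
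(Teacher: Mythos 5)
Your proposal is correct and follows exactly the route the paper intends: the corollary is stated as an immediate consequence of chaining Corollary \ref{cor:non-param-overQ} (the listed groups have no weakly $k(U)$-parametric extension, unconditionally) with Proposition \ref{cor:Q-param-extensions}(c) (under (WH), no weakly $k(U)$-parametric extension implies no $k$-parametric extension), and your check that the two interfaces match is the only verification needed.
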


\begin{proof}[Proof of proposition \ref{cor:Q-param-extensions}]
%
Let $F/k(T)$ and $L/k(U)$ be as in (a) and $\widetilde P_1,\ldots, \widetilde P_N$ be the polynomials provided by theorem \ref{thm:twist-main}.  From this result and the assumption in (a), none of
the equations $\widetilde P_i(U,t,y)=0$ has a solution $(T_0,Y_0)\in \Cc(U)^2$, $i=1,\ldots,N$. Under {\rm (WH)}, one may conclude that for infinitely many $u_0\in k$, none of
the equations $\widetilde P_i(u_0,t,y)=0$ has a solution $(t_0,y_0)\in k^2$. From theorem \ref{thm:twist-main}, for these $u_0$, the specialization $L_{u_0}/k$, which is of Galois group contained in $G$, is not 
a specialization $F_{t_0}/k$ of $F/k(T)$ with $t_0\in k$. Statements (b) and (c) follow straightforwardly from (a).
\end{proof}

\begin{remark} \label{rem:on-corollary-with-WH} (a) The proof shows that proposition \ref{cor:Q-param-extensions} and corollary \ref{cor:Q-param-extensions2} still hold if (WH) is replaced by the weaker hypothesis (WH-Gal/$\Cc$) for which the conclusion of (WH)  solely holds for polynomials $P_1,\ldots,P_n$ such that

\noindent
-  $P_i(U,T,Y)$ is irreducible in $\overline{k(U)}[T,Y]$, $i=1,\ldots,N$,

\noindent
- the splitting field over $\overline{k(U)}(T)$ of $\widetilde P_i$ is the extension $F\overline{k(U)}/\overline{k(U)}(T)$, that is, this splitting field  is $\overline{k(U)}(T)$-isomorphic to $F\overline{k(U)}$, $i=1,\ldots,N$.

%
%
%

\vskip 1,5mm

%

\noindent
{\rm (b)} We can now explain how, conditionally, Legrand's result (mentioned in \S \ref{sec:intro}.1) can be deduced from ours. Assuming $G$ is the group of some $\Qq$-regular Galois extension $L/\Qq(U)$, if $F/\Qq(T)$ is another $\Qq$-regular Galois extension of group $G$ such that $L\Cc/\Cc(U)$ is not a $\Cc(U)$-specialization of $F/\Qq(T)$, then it follows from proposition \ref{cor:Q-param-extensions} that, if $G$ satisfies (WH-Gal/$\Cc$), $F/\Qq(T)$ is not $\Qq$-parametric.

For example, one can take for $F/\Qq(T)$ a specialization $L_{U_0}/\Qq(T)$ with $U_0(T)= a + T^5$ with $a\in \Qq$. For all but finitely many $a\in \Qq$, $\Gal(F/\Qq(T)) = G$. From inequality  (*) from \S \ref{ssec:comparizon}, the branch point number of  $F\Cc/\Cc(T)$ 
is bigger than that of $L\Cc/\Cc(U)$. From theorem \ref{thm:order-invariants}, the latter is not a specialization of the former with $T_0\in \Cc(U)$.

\end{remark}

\subsubsection{Comments on the working hypothesis} \label{ssec:comments-WH}

\hskip 1mm
\vskip 0,5mm

\noindent
(a) The working hypothesis (WH) is known to hold in these situations:
\vskip 0,7mm

\noindent
- $\deg_T(P)=0$: (WH) is then Hilbert's Irreducibility Theorem; (WH) is an extension of HIT to $2$-indeterminate polynomials.
\vskip 1mm

\noindent
- the affine $\overline{k(U)}$-curve of equation $P(U,t,y)=0$ is of genus $0$. This follows from Schinzel's thm 38 in \cite{schinzel-book}. A stronger conclusion even holds: the infinitely many $u_0$ can be chosen in the ring of integers of $k$.
\vskip 0,5mm

\noindent
- the polynomial $P(U,T,Y)$ is of the form $Y^n-U^m Q(T)$ with $n\geq 2$ dividing $\deg(Q)$, $m$ relatively prime to $n$ and $Q\in k[T]\setminus k$ a polynomial with integral coefficients such that the Galois group of $Q$ has an element that fixes no root of $Q$ (\hbox{\it e.g.} $Q$ is irreducible in $k[T]$) \cite{legrand-hyper-elliptic}.
\vskip 1mm

\noindent
(b) There is no known counter-example to (WH). 
In this context, Cassels and Schinzel \cite{cassels-schinzel} consider the family of genus $1$ curves $y^2= t(t^2-(7+7U^4)^2)$ and show that this equation has no solution $(T_0(U),Y_0(U) \in \Qq(U)^2$ and that, under a conjecture of Selmer  \cite{selmer}, for every $u_0\in \Qq$, the equation $y^2= t(t^2-(7+7u_0^4)^2)$ has a solution $(t_0,y_0)\in \Qq^2$.
This however does not provide a counter-example (even conjectural) to (WH) as the equation $y^2= t(t^2-(7+7U^4)^2)$ also has solutions in $\Cc(U)$.

\vskip 1mm

\noindent
{\rm (c)} While it may be hard to find a non $k$-parametric extension $F/k(T)$ to test the working hypothesis, it is possible to produce extensions $L/k(U)$ and $F/k(T)$ with the former not a specialization of the latter (for example do as above in (b)). The polynomials $\widetilde P_1,\ldots,\widetilde P_N$ given by theorem \ref{thm:twist-main} then are good candidates to test  the working hypothesis.
\vskip 1mm

\section{$\Cc(U)$-specializations of Galois extensions $F/\Cc(T)$} \label{sec:C(U)-specializations} 


Let $F/\Cc(T)$ be a degree $d$ Galois extension of group $G$, with $r$ branch points $t_1,\ldots,t_r$, inertia canonical invariant ${\bf C} = (C_1,\ldots,C_r)$ and associated ramification indices ${\mathbf e} = (e_1,\ldots,e_r)$. Also set
\vskip 2,5mm

\centerline{$\displaystyle \left\{\begin{matrix}
& \displaystyle \varepsilon =  \frac{1}{e_1} + \cdots +\frac{1}{e_r}\hfill \cr
& e_\infty = \max(e_1,\ldots,e_r) \hfill\cr
\end{matrix}\right.$}
\vskip 2mm

Let $T_0(U)=a(U)/b(U) \in \Cc(U)\setminus \Cc$ with $a,b\in \Cc[U]$ relatively prime and $b\not=0$.
 Set 
\vskip 1mm
\centerline{$N = \deg(T_0)=\max(\deg(a),\deg(b))$} 
\vskip 2mm

We will compare the invariants of $F/\Cc(T)$ to those of $F_{T_0}/\Cc(T)$.

Note that when $N=1$, $T_0$ is a linear fractional transformation and the two extensions $F/\Cc(T)$ and $F_{T_0}/\Cc(T)$ are isomorphic. More specifically, $T_0$ interprets as an automorphism of $\Pp^1(\Cc)$ and if $f:X\rightarrow \Pp^1$ is the branched cover corresponding to  $F/\Cc(T)$, then $F_{T_0}/\Cc(T)$ corresponds to the cover $f\circ T_0^{-1}$. In particular the invariants $G$, $r$, $d$, $g$, ${\bf C}$ are the same for the two extensions.

\subsection{Invariants of the specialized extensions} \label{ssec:invariants}
Denote the invariants of the specialized extensions $F_{T_0}/\Cc(U)$ by $G_{T_0}$, $d_{T_0}=|G_{T_0}|$, $g_{T_0}$ and ${\bf C}_{T_0}$. The following statement is the most precise of this section. We will in particular deduce theorem \ref{thm:order-invariants} from it.

\begin{theorem} \label{thm:r-in-spec}  Consider the specialized extension $F_{T_0}/\Cc(U)$.
\vskip 1mm

\noindent
{\rm (a)} We have $G_{T_0}\subset G$, equivalently $d_{T_0}\leq d$. Furthermore $d_{T_0}<d$ if and only if there is a subfield $L\subset F$, $L\not=\Cc(T)$, of genus $0$, such that a specialization of it at $T_0$ is trivial: $L_{T_0}=\Cc(U)$.

\vskip 1,5mm

\noindent
{\rm (b)} The  branch point number $r_{T_0}$ 
satisfies $r_{T_0} \leq rN$ and

\vskip 1,5mm

{\rm (b-1)} \hskip 20mm $\displaystyle r_{T_0}  \geq \frac{(r-\varepsilon - 2) N +2}{1-(1/e_\infty)}$\hskip 5mm if $r\geq 0$

\vskip 2mm

{\rm (b-2)} \hskip 20mm $\displaystyle  r_{T_0}  \geq (r-4)N+4 $  \hskip 11mm if $r\geq 4$

%
%

\vskip 3mm

\noindent
{\rm (c)} The inertia canonical invariant ${\bf C}_{T_0}$ of $F_{T_0}/\Cc(U)$ consists of conjugacy classes in $G_{T_0}$ of powers $g^\alpha$ {\rm (}$\alpha\in \Nn$ {\rm )} of elements of $C_1 \cup \ldots \cup C_r$.
\vskip 2mm

\noindent
{\rm (d)} The genus $g_{T_0}$ satisfies  $g_{T_0} \leq N(g+d-1)$, and, if $G_{T_0}=G$, 
\vskip 0,5mm
\centerline{$\displaystyle g_{T_0}-g\geq  \frac{d}{4} (N-1)(r-4)$}
\end{theorem}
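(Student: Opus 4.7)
The natural framework is to study the fiber product $X'=X\times_{\Pp^1_T,T_0}\Pp^1_U$, where $f\colon X\to\Pp^1_T$ is the Galois cover corresponding to $F/\Cc(T)$ and $T_0\colon\Pp^1_U\to\Pp^1_T$ is viewed as a degree-$N$ morphism. With its two projections $\pi_1\colon X'\to\Pp^1_U$ and $\pi_2\colon X'\to X$, the scheme $X'$ decomposes into $[G:G_{T_0}]$ isomorphic irreducible components, each a $G_{T_0}$-Galois cover of $\Pp^1_U$ with function field $F_{T_0}$. This perspective translates the invariants of $F_{T_0}/\Cc(U)$ into ramification data of $\pi_1$ and $\pi_2$, which are in turn controlled by $T_0$ and $f$ separately.

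For (a) and (c): the inclusion $G_{T_0}\subset G$ is immediate, and $d_{T_0}<d$ is equivalent to $X'$ being reducible. When this happens, $L=F^{G_{T_0}}$ is a proper subfield of $F$ strictly containing $\Cc(T)$; the component of $X'$ whose stabilizer is $G_{T_0}$ yields a dominant morphism $\Pp^1_U\to Y$ covering $T_0$ (where $Y$ is the smooth model of $L$), so $Y$ is rational by L\"uroth ($g_L=0$), and for this choice of prime the specialization is $L_{T_0}=F_{T_0}^{G_{T_0}}=\Cc(U)$. The converse is an idempotent argument: if $L_{T_0}=\Cc(U)$ with $L\supsetneq\Cc(T)$ inside $F$, then $L\otimes_{\Cc(T)}\Cc(U)$ has $\Cc(U)$ as a direct summand, hence is not a field, which produces a non-trivial idempotent of $F\otimes_{\Cc(T)}\Cc(U)$. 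For (c), local analysis at $u_0\in\Pp^1_U$ with $T_0(u_0)=t_i$ and $T_0$-ramification $m=m(u_0)$ gives: from local equations $y^{e_i}\sim T-t_i$ for $f$ and $T-t_i\sim c(U-u_0)^m$ for $T_0$, the pulled-back equation $y^{e_i}\sim c(U-u_0)^m$ has ramification index $e_i/\gcd(e_i,m)$ over the $U$-line and inertia generated by $g_i^m$ for any $g_i\in C_i$; every class in ${\bf C}_{T_0}$ is thus the $G_{T_0}$-class of a power of an element of $C_1\cup\cdots\cup C_r$.

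For (b): fix $i$ and let $m_{i,1},\ldots,m_{i,s_i}$ be the $T_0$-ramification indices of the $s_i$ preimages of $t_i$, so $\sum_j m_{i,j}=N$. By (c), such a preimage is a branch point of $F_{T_0}/\Cc(U)$ iff $e_i\nmid m_{i,j}$; writing $s_i^0,s_i^1$ for the respective counts, $r_{T_0}=\sum_i s_i^1$, so immediately $r_{T_0}\le\sum_i s_i\le rN$. Since $m_{i,j}\ge e_i$ for non-branch preimages and $m_{i,j}\ge 1$ in general, $e_i s_i^0+s_i^1\le N$, equivalently $s_i\le N/e_i+s_i^1(1-1/e_i)$. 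Summing in $i$, using $1-1/e_i\le 1-1/e_\infty$, and combining with the lower bound $\sum_i s_i\ge(r-2)N+2$ given by Riemann-Hurwitz applied to $T_0$, yields (b-1):
\[
r_{T_0}(1-1/e_\infty)\ge(r-\varepsilon-2)N+2.
\]
For (b-2), the same inequality rewritten as $s_i^1\ge(e_is_i-N)/(e_i-1)$ and the trivial bound $(e_i-1)^{-1}\le 1$ (valid since $e_i\ge 2$) give $s_i^1\ge 2s_i-N$, and summation yields
\[
r_{T_0}=\sum_i s_i^1\ge 2\textstyle\sum_i s_i-rN\ge 2((r-2)N+2)-rN=(r-4)N+4.
\]

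For (d): apply Riemann-Hurwitz to $\pi_2\colon X'\to X$ (of degree $N$). At each $(x_0,u_0)\in X'$, the local contribution to $\deg R_{\pi_2}$ is $m(u_0)/\gcd(e_f(x_0),m(u_0))-1\le m(u_0)-1$, and summing over the at most $d$ preimages $x_0$ of $T_0(u_0)$ in $X$ gives a contribution $\le d(m(u_0)-1)$ at $u_0$; summing in $u_0$ with Riemann-Hurwitz for $T_0$ yields $\deg R_{\pi_2}\le d(2N-2)$. Then
\[
2g_{T_0}-2=N(2g-2)+\deg R_{\pi_2}\le N(2g-2)+d(2N-2)
\]
gives the upper bound $g_{T_0}\le N(g+d-1)$. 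When $G_{T_0}=G$, $X'$ is irreducible and $g_{X'}=g_{T_0}$; then $\deg R_{\pi_2}\ge 0$ yields $g_{T_0}\ge N(g-1)+1$, and using $2g-2=d(r-\varepsilon-2)$ together with $\varepsilon\le r/2$ (since each $e_i\ge 2$) yields
\[
g_{T_0}-g\ge\tfrac{(N-1)d}{2}(r-\varepsilon-2)\ge\tfrac{(N-1)d}{2}\cdot\tfrac{r-4}{2}=\tfrac{d}{4}(N-1)(r-4).
\]
The most delicate point is part (a), where one must match carefully the factorization of the étale algebra $F\otimes_{\Cc(T)}\Cc(U)$ with a geometric rationality criterion on intermediate covers; parts (b)--(d) reduce to ramification and Riemann-Hurwitz bookkeeping once the fiber product picture is in place.
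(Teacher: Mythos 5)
Your proof is correct and reaches all four conclusions, but it is organized around the normalized fiber product $X'=X\times_{\Pp^1_T}\Pp^1_U$ rather than the paper's arithmetic machinery, and this changes several steps in substance. For the key local statement (which branch points survive and what the new inertia is), you do a direct Abhyankar-type computation $y^{e_i}\sim c(U-u_0)^m$, whereas the paper invokes the Specialization Inertia Theorem from its appendix after checking its hypotheses; over $\Cc$ these give the same answer, and your route is more self-contained. In (a) you produce the intermediate field as $L=F^{G_{T_0}}$ (the fixed field of the decomposition group) and get $g_L=0$ from L\"uroth, while the paper extracts $\theta$ via the standard hilbertian-field factorization argument and reads off genus $0$ from the rational parametrization $Q(T_0(U),Y_0(U))=0$ --- equivalent content, cleaner packaging on your side. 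Part (b) is essentially the paper's own bookkeeping (your $s_i^0,s_i^1$ are its $s_i$ and $p_i+q_i$), and your inequalities all check out, including the slightly compressed step $s_i^1\ge 2s_i-N$, which indeed follows from $(N-s_i)/(e_i-1)\le N-s_i$. The most genuinely different point is the lower bound in (d): you get $g_{T_0}-g\ge (N-1)(g-1)$ from Riemann--Hurwitz for $\pi_2:X'\to X$ with $\deg R_{\pi_2}\ge 0$ and then convert via $2g-2=d(r-\varepsilon-2)$ and $\varepsilon\le r/2$; the paper instead estimates the ramification contributed by the simple roots of $T_0=t_i$. Your version is shorter and avoids the sign bookkeeping in the paper's term-by-term estimate, at the (harmless) cost of not tracking which branch points contribute.

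One loose end: in the upper bound of (d) you write $2g_{T_0}-2=N(2g-2)+\deg R_{\pi_2}$, which presupposes that $X'$ is irreducible, while that part of the statement makes no hypothesis on $G_{T_0}$. If $X'$ is reducible, $g_{T_0}$ is the genus of a component and $\pi_2$ restricted to it has degree $Nd_{T_0}/d<N$, so the formula needs adjusting. The fix is easy --- either run Riemann--Hurwitz for $F_{T_0}/\Cc(U)$ directly as the paper does, bounding $\sum_{F_{T_0}}(e_{\mathcal P}-1)\le N\sum_F(e_{\mathcal P}-1)$, or redo your computation on one component --- but as written that line only covers the case $G_{T_0}=G$.
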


\begin{remark} The lower and upper bounds for $g_{T_0}$ in (d) are better than those that can be deduced from inequalities (b-1) or (b-2) by combining them with the usual ones given by Riemann-Hurwitz:
\vskip 1mm

\centerline{$\displaystyle \frac{r}{2} + 1 -n \leq g \leq \frac{rn}{2} + 1 -n -\frac{r}{2}$}
\vskip 1,5mm

%
%
%
%

\end{remark}

\begin{proof} (a) The first part of (a) is standard. 

Assume that there is a subfield $L\subset F$, $L\not=\Cc(T)$ with a trivial specialization: $L_{T_0}=\Cc(U)$. Then we have 
\vskip 1mm

\centerline{$d_{T_0} = [F_{T_0}:L_{T_0}] \hskip 2pt [L_{T_0}:\Cc(U)]  \leq [F:L] < d$.} 
\vskip 1mm

For the converse, assume that $d_{T_0}<d$. A standard argument ({\it e.g.} \cite[lemma 13.1.2]{FrJa}) from the theory of hilbertian fields (applied here to the field $\Cc(U)$) shows that there exists  $\theta \in F\setminus \Cc(T)$ such that $\Cc(T,\theta)_{T_0}=\Cc(U)$: if $P(T,Y)$ is an affine equation of $F/\Cc(T)$, $\theta$ is a coefficient in $\overline{\Cc(T)}$ of a factorisation $P(T,Y)$ in $\overline{\Cc(T)}[Y]$. The field $L=\Cc(T,\theta)$ 
is the desired field. 

That $L$ is of genus $0$ follows from
$L_{T_0}=\Cc(U)$. Indeed, if $Q(T,Y)$ is an affine equation for $L/\Cc(T)$, $L_{T_0}=\Cc(U)$ means that
there exists $Y_0(U)\in \Cc(U)$ such that $Q(T_0(U),Y_0(U))=0$, which is a rational parametrization 
of the curve of equation $Q(T,Y)$. Hence its function field $L$ is of genus $0$.
\vskip 1,5mm


\noindent
(b) A first point of the proof is that
\vskip 1,5mm

\noindent
{\rm (*)} {\it if $u\in \Pp^1(\Cc)$ is a branch point of $F_{T_0}/\Cc(U)$, there exists a branch point $t_i$ of $F/\Cc(T)$ such that $T_0(u) = t_i$ and, conversely, if $T_0(u) = t_i$, the associated inertia group is generated by some power $g_i^\alpha$ {\rm (}$\alpha\in \Nn$ {\rm )} of an element $g_i \in C_i$}.
\vskip 1,5mm

\noindent  
This statement, which in particular yields conclusion (c), follows from the Specialization Inertia Theorem (SIT)  recalled in \S \ref{sec:preliminaries}. More specifically, we use it in the situation that the Dedekind domain is $A=\Cc[U]$ (or $A=\Cc[1/U]$ for $u=\infty$), the $K$-regular extension  is $F\Cc(U)/\Cc(U)(T)$ and ${\frak p}$ is the ideal ${\frak p} = \langle U-u \rangle$ if $u \in \Cc$ (and ${\frak p} = \langle 1/U \rangle$ if $u=\infty$). 

A few remarks on the assumptions from \S \ref{sec:preliminaries}  are in order:
\vskip 0,5mm

\noindent
(1) $|G|\notin {\frak p}$ since $A/{\frak p}=\Cc$ is of characteristic $0$.
\vskip 0,5mm

\noindent
(2)  {\it there is no vertical ramification at ${\frak p}$ in the extension $FK/K(T)$:}
indeed if ${\mathcal Y}$ is a primitive element of $F/\Cc(T)$, integral over $A$, then $1,{\mathcal Y}, \ldots, {\mathcal Y}^{d-1}$ (with $d=|G|$) are integral over $A$ and over $A[T]$ as well, and form a $K(T)$-basis of $F\Cc(U)/\Cc(U)(T)$. The discriminant of this basis is a non-zero element of $\Cc\subset A$, and so remains non-zero modulo ${\frak p}$. This classically guarantees the content of our claim.
\vskip 0,5mm

\noindent
(3) {\it no two different branch points of $F/K(T)$ meet modulo ${\frak p}$}: indeed the branch points are those of $F/\Cc(T)$ and their mutual differences $t_i-t_j$ or $(1/t_i)-(1/t_j)$ are non-zero elements of $\Cc\subset A$ and remain non-zero modulo ${\frak p}$.
\vskip 0,5mm

\noindent
(4) {\it the ideal ${\frak p}$ is unramified in $K(t_1,\ldots,t_r)/K=\Cc(U)/\Cc(U)$.}
\vskip 0,5mm

\noindent
(5) {\it $t_i$ and $1/t_i$ are integral over $\widetilde A_{\frak p}$}: $t_i, 1/t_i\in \Cc \cup \{\infty\}$ $i=1,\ldots,r$.
\vskip 0,5mm

%

The SIT concludes that if $u\in \Pp^1(\Cc)$ is a branch point of $F_{T_0}/\Cc(U)$, there exists $i\in \{1,\ldots,r\}$ such that $T_0$ meets $t_i$ modulo ${\frak p}$, which exactly means that $T_0(u)=t_i$, and, secondly, that, if $T_0(u)=t_i$, the inertia group of $F_{T_0}/\Cc(U)$ at $u$, is generated by an element of $C_i^\alpha$ 
with 
\vskip 2mm

\noindent
(**) \hskip 20mm $\alpha= {\rm ord}_u(T_0(U)-t_i)$
\vskip 2mm

This concludes the proof of (*). To simplify the exposition of the rest of the proof, we first assume:
\vskip 1,5mm

\noindent
(H) {\it neither $\infty$ nor $T_0(\infty)$ is a branch point of $F/\Cc(T)$}. 
\vskip 1,5mm


\noindent
and will explain afterwards how to reduce to this hypothesis. 

For an element $u\in \Pp^1(\Cc)$ such that $T_0(u) = t_i$ for some $i=1,\ldots,r$ to be a branch point of $F_{T_0}/\Cc(U)$, the integer $\alpha$ from (**) should not be a multiple of $e_i$. Note further that because of (H), $u\not=\infty$ and $u$ is not a pole of $T_0$. 

For each $i=1, \ldots,r$, label the roots in $\Cc$ of $a(U)-t_ib(U)$ as follows:

\noindent
$\sxbullet$ $u_{i1},\ldots, u_{ip_i}$ are the $p_i$ distinct simple roots, 

\noindent
$\sxbullet$ $v_{i1},\ldots, v_{iq_i}$ are the $q_i$ distinct multiple roots of orders, say $m_{i1},\ldots,$ $m_{iq_i}$, non divisible by $e_i$,

\noindent
$\sxbullet$ $w_{i1},\ldots, v_{is_i}$ are the $s_i$ distinct multiple roots of orders, say $n_{i1},\ldots,$$ n_{iq_i}$, divisible by $e_i$.
\vskip 0,5mm

\noindent
Then we have
\vskip 1,5mm

\noindent
(1) $\displaystyle p_i + \sum_{j=1}^{q_i} m_{ij} + \sum_{j=1}^{s_i} n_{ij} = N$, $i=1,\ldots,r$,
\vskip 1,5mm

\noindent
(2) $\displaystyle\sum_{i=1}^r  \left(\sum_{j=1}^{q_i} (m_{ij}-1) + \sum_{j=1}^{s_i} (n_{ij} -1)\right) \leq 2N-2$.
\vskip 1,5mm

Equality (1) is clear. As to (2), it follows from the fact (left to the reader\footnote{with this hint:  if $a^{(k)}\hskip -2pt (u) - t_i b^{(k)}\hskip -2pt (u) = 0$ for  $k=0,\ldots,m-1$, then  $a^{(h)}\hskip -2pt (u) b^{(k)}\hskip -2pt (u)- a^{(k)}\hskip -2pt (u)  b^{(h)}\hskip -2pt (u) = 0$ for $h,k=0,\ldots,m-1$. The claim follows from the observation that every derivative $(a^\prime b - a b^\prime)^{(h)}$ ($h=0,\ldots,m-2$) is a sum of terms of the form $a^{(h)} b^{(k)}- a^{(k)}  b^{(h)}$ with $h,k=0,\ldots,m-1$. }) that if $u\in \Cc$ is root of $a(U)-t_ib(U)$ of order $m\geq 1$ for some $i=1,\ldots,r$, then $u$ is a root of order $m-1$ of the polynomial $a^\prime b  - a b^\prime $, which is of degree $2N-2$. An alternate argument consists in applying the Riemann-Hurwitz formula to the branched cover $T_0:\Pp^1\rightarrow \Pp^1$ induced by the rational function $T_0(U)$: the left-hand side term from (2) is smaller than or equal to the term $\sum_P (e_P-1)$ of this formula (where $P$ ranges over all ramified points) and so is $\leq 2\cdot 0 - 2 + 2 \deg(T_0) = 2N-2$.

Statement (*) gives 

\centerline{$\displaystyle r_{T_0} = \sum_{i=1}^r (p_i+q_i)$}
\vskip 0,5mm

\noindent
Clearly $r_{T_0} \leq rN$ follows. Inequality (2), conjoined with (1), rewrites
\vskip 1mm

\centerline{$\displaystyle\sum_{i=1}^r  (N-p_i - q_i -s_i) \leq 2N-2$}
\vskip 1mm

\noindent
so we obtain
\vskip 1,5mm

\noindent
(***) \hskip 25mm $\displaystyle r_{T_0} \geq (r-2)N + 2 - \sum_{i=1}^r s_i$
\vskip 1,5mm

\noindent
From (1), for $i=1,\ldots,r$, we also have $p_i+q_i + e_i \hskip 1pt s_i \leq N$, whence 
\vskip 1,5mm

\centerline{$\displaystyle s_i \leq \frac{N}{e_i} - \frac{p_i+q_i}{e_i}$}
\vskip 1,5mm

\noindent
The definition of $e_\infty$ and $\varepsilon$ leads to
\vskip 1,5mm

\centerline{$\displaystyle r_{T_0}  \geq rN - (2N-2) - \varepsilon N + \frac{1}{e_\infty}  \sum_{i=1}^r (p_i+q_i) $}

\vskip 1,5mm
\noindent
and finally to inequality (b-1).
%
%

From (2) we can also deduce that 
\vskip 1mm

\noindent
\centerline{$\displaystyle \sum_{i=1}^r s_i \leq 2N-2$}
\vskip 1mm

\noindent
 which conjoined with (***), yields inequality (b-2). 
 
%
%
%
%
 \vskip 1,5mm 
 
\noindent
(d) Write the Riemann-Hurwitz formula for $F/\Cc(T)$ and $F_{T_0}/\Cc(U)$:
 \vskip 2,5mm
%
%
%
   
    \centerline{$ \left\{ \begin{matrix}
    &\displaystyle 2g-2 = -2d +\sum_F (e_{\mathcal P}-1) \cr
&\displaystyle 2g_{T_0}-2 = -2d_{T_0} +\sum_{F_{T_0}} (e_{\mathcal P}-1)\cr
\end{matrix} \right.$}
\vskip 1,5mm

\noindent
where $\sum_{F}$ (resp. $\sum_{F_{T_0}}$) means that the sum ranges over all places of $F$ (resp. of $F_{T_0}$)
 trivial on $\Cc$. The first claim from (d) comes from
 \vskip 2,5mm
 
\centerline{$\displaystyle g_{T_0}= 1-d_{T_0} +\frac{1}{2}\sum_{F_{T_0}} (e_{\mathcal P}-1)\leq \frac{N}{2} \sum_{F} (e_{\mathcal P}-1) = N(g-1+d)$}
 \vskip 1,5mm

 As $F/\Cc(T)$ is Galois, we also have 
 
 \vskip 2mm

\centerline{$\displaystyle \sum_{F} (e_{\mathcal P}-1) = \sum_{i=1}^r \sum_{{\mathcal P}/t_i} (e_i-1) = \sum_{i=1}^r  \left(d-\frac{d}{e_i}\right) $}

\vskip 2mm

\noindent Assume $G_{T_0}=G$, so $d_{T_0}=d$. Our analysis of the branch 
 points of the specialized extension $F_{T_0}/\Cc(U)$ yields:
 \vskip 1mm
 

%
%
%

\centerline{$\displaystyle \sum_{F_{T_0}} (e_{\mathcal P}-1) \geq \ \sum_{i=1}^r  \left(d-\frac{d}{e_i}\right)  p_i$}

 \vskip 1mm
 \noindent
whence

\centerline{$\displaystyle g_{T_0}-g \geq \frac{1}{2} \hskip 2pt \sum_{i=1}^r  \left(d-\frac{d}{e_i}\right) (p_i-1)$}

\vskip 1mm
\noindent
Now we have, for each $i=1,\ldots,r$,

\centerline{$\begin{matrix}
\hfill p_i & \displaystyle = N - \sum_{j=1}^{q_i} m_{ij} - \sum_{j=1}^{s_i} n_{ij} \hfill \\
&\displaystyle \geq N - 2 \left(\sum_{j=1}^{q_i} (m_{ij}-1)+  \sum_{j=1}^{s_i} (n_{ij}-1)\right) \hfill \\
\end{matrix}$}
%
%
%

\noindent
We deduce:
\vskip 2mm
\centerline{$\begin{matrix}
\hfill  g_{T_0}-g \hskip -2mm&\displaystyle \geq \frac{1}{2}\hskip 1mm \hbox{$\displaystyle \sum_{i=1}^r  (d-\frac{d}{e_i})$} \left(N-1 - 2 \hskip 1mm (\sum_{j=1}^{q_i} (m_{ij}-1)+  \sum_{j=1}^{s_i} (n_{ij}-1))\right)\hfill \\
&\displaystyle \geq \frac{d}{4} \left(r(N-1) -2(2N-2)\right) \hfill \\
& \displaystyle = \frac{d}{4} (N-1)(r-4) \hfill \\
\end{matrix}$}
\vskip 4mm

Finally we explain how to reduce to a situation for which assumption (H) is satisfied. Note first that the parameters $r$, $d$, $g$, ${\bf C}$ are unchanged if the extension $F/\Cc(T)$ is replaced by any extension $F_\chi/\Cc(T)$ with $\chi\in \Cc(T)$ of degree $1$.


For some fixed $\theta_0\in \Cc \setminus\{t_1,\ldots,t_r\}$, consider the linear 
fractional transformation $\chi$ defined by
\vskip 1mm

\centerline{$\displaystyle \chi^{-1}(T) = \frac{\tau T+\mu}{T-\theta_0}$}
\vskip 1mm

\noindent
where $\tau, \mu$ are chosen in $\Cc$ so that the complex numbers $\chi^{-1}(t_1)$,$\ldots$, $\chi^{-1}(t_r)$ are different from $\infty$; such a choice is possible as $\Cc$ is infinite. These $r$ complex numbers are the branch points of the extension $F_\chi/\Cc(T)$, and so these branch points are different from $\infty$. Fix then a second linear fractional transformation $\chi^\prime$ such that $T_0(\chi^\prime(\infty)) \notin \{t_1,\ldots,t_r\}$.
By construction the extension $F_\chi/\Cc(T)$ and the rational function $\chi^{-1} \circ T_0 \circ \chi^\prime$ satisfy the assumption (H). Therefore the conclusions from theorem \ref{thm:r-in-spec} comparing the ramification invariants of the specialized extension 
\vskip 1mm

\centerline{$(F_\chi)_{\chi^{-1} \circ T_0 \circ \chi^\prime}/\Cc(U) = F_{T_0\circ \chi^\prime}/\Cc(U)$}
\vskip 1mm

\noindent
with those of $F_\chi/\Cc(T)$ are satisfied. These conclusions hold as well for the invariants of the specialized extension $F_{T_0}/\Cc(U)$ compared to those of $F/\Cc(T)$ since $F_{T_0}/\Cc(U)$ (\hbox{resp.} $F/\Cc(T)$) is obtained from $F_{T_0\circ \chi^\prime}/\Cc(U)$ (\hbox{resp.} $F_\chi/\Cc(T)$) by composition with an automorphism of $\Cc(U)$ (\hbox{resp.}  an automorphism of $\Cc(T)$). 
\end{proof}

In the next subsections, we explain how to theorem \ref{thm:order-invariants} can be deduced from theorem \ref{thm:r-in-spec}.


\subsection{Proof of theorem \ref{thm:order-invariants} \hskip -2mm (a)} \label{sec:branch-point-number-growth}
Assume $g\geq 1$ 
and let $L/\Cc(T)$ be a Galois extension such that $F/\Cc(T) \prec L/\Cc(T)$, \hbox{\it i.e.}, there exists $T_0\in \Cc(U)\setminus \Cc$ such that $L/\Cc(T)= F_{T_0}/\Cc(T)$. As in \S \ref{ssec:invariants} denote the invariants of $F_{T_0}/\Cc(T)$ by $G_{T_0}$, $r_{T_0}$, $g_{T_0}$, ${\bf C}_{T_0}$.

We already know that $G\supset G_{T_0}$ and ${\bf C} \prec {\bf C}_{T_0}$ (theorem \ref{thm:r-in-spec} (a),\hskip -1pt (c)). 

%
%
%

Next we show that $r_{T_0} \geq r$. We may assume that $N\geq 2$.


A first case is when $r\geq 4$: $r_{T_0} \geq r$ follows from theorem \ref{thm:r-in-spec} (b-2). 
\vskip 0,5mm


%
%
%
%
%

From theorem \ref{thm:r-in-spec} (b-1),  if $\varepsilon \leq (r-1)/2$ and $r\geq 3$ we have:

\vskip 1,5mm

\centerline{$\displaystyle  r_{T_0} > (r-\frac{r-1}{2} - 2) N +2 \hskip 1mm \geq  \hskip 1mm 2\hskip -2pt \left(\frac{r}{2} - \frac{3}{2}\right) +2 = r-1$}

\vskip 2mm
\noindent
In particular, for $r=3$, we have $r_{T_0}\geq r$ if $\varepsilon \leq 1$.  A simple check shows that the following $3$-tuples ${\bf e}$:
\vskip 1mm

\centerline{$(2,2,2), (2,3,3), (2,3,4), (2,3,5), (2,2,e)$, $(e\geq 3)$}
\vskip 1mm

\noindent
are exactly those for which $\varepsilon > 1$ and that $g=0$ in these cases. 

We are left with the case $r=2$. But then $F/\Cc(T)$ is a cyclic extension with two branch points and hence $g=0$. This ends the proof 

\noindent
of the inequality $(G,r,{\bf C}) \prec (G_{T_0},r_{T_0},{\bf C}_{T_0})$ when $g\geq 1$.
%

Assume next $G_{T_0}=G$. The above inequality then also holds if $g=0$. The only non-trivial point is $r_{T_0} \geq r$.  We know that $r_{T_0} < r$ possibly happens only when $r\leq 3$ and $g=0$ and in this case $r_{T_0} < r$ means that either $r_{T_0}=0$ in which case $F_{T_0}=\Cc(T)$ and then $G_{T_0}=\{1\} \not=G$, or, $r_{T_0}=2$ in which case $F_{T_0}/\Cc(T)$ is cyclic, and then again $G_{T_0} \not=G$. Indeed, in this last case, $G$ cannot be cyclic as $r=3$, $g=0$ and a cyclic group has no generating set $\{g_1,g_2,g_3\}$ such that $g_1g_2g_3=1$ and with respective orders those in one of the above triples ${\bf e}$. 
Finally it immediately follows from theorem \ref{thm:r-in-spec} (d) that $g_{T_0}\geq g$ if $r\geq 4$.

\begin{remark}
If $F$ is of genus $0$ and $G_{T_0} \not=G$, $r_{T_0} < r$ may happen. One may then have $G_{T_0} =\{1\}$ or not (see example 3.3.2).


\end{remark}

\subsection{The exceptional genus $0$ cases} \label{ssec:exceptional-list}
The Riemann-Hurwitz formula
\vskip 0,5mm

\centerline{$\displaystyle 2g-2 = -2d + \sum_{i=1}^r (e_i-1) \frac{d}{e_i}$\hskip 5mm  where $d=|G|$,}
\vskip 0,5mm

\noindent
in a Galois situation yields
\vskip 1mm

\centerline{$2g-2 = d\hskip 1pt (r-2-\varepsilon)$}
\vskip 2mm

\noindent
As $\varepsilon \leq r/2$ we have $\displaystyle 2g-2 \geq d\hskip 1pt (\hskip 1pt \frac{r}{2}-2)$, and if $\varepsilon \leq (r-1)/2$, then

\noindent
we have $\displaystyle 2g-2 \geq \frac{d}{2} \hskip 1mm (r-3)$. Hence if $g=0$,  
$r\leq 3$ and $\varepsilon > 1$. 
\vskip 1mm

\noindent
Conclude that if $g=0$ we necessarily are in one of these cases:
\vskip 2mm

\noindent
{\rm (1)} {\it $r=3$ and ${\bf e} \in \{(2,2,2), (2,3,3), (2,3,4), (2,3,5), (2,2,n)$, $(n\geq 3) \}$}

\noindent
\hskip 2mm 
{\it with corresponding groups $(\Zz/2\Zz)^2$, $A_4$, $S_4$, $A_5$, $D_{2n}$ $(n\geq 3)$.}
\vskip 1mm

\noindent
 {\rm (2)} {\it $r=2$ then $F/\Cc(T)$ is a cyclic extension with $2$ branch points.}
\vskip 1,5mm

\noindent
Namely, a simple calculation shows that the tuples ${\bf e}$ are the indicated ones. Concerning the corresponding groups, note first that, as $F$ is of genus $0$, $G$ must be a subgroup of ${\rm PGL}_2(\Cc)$ and so one of the proposed list. The case $r=2$ is clear. Assume $r=3$. Then $G$ cannot be cyclic. For ${\bf e}=(2,2,2)$, $G$ is generated by two involutions with product an involution, so $G=(\Zz/2\Zz)^2$. Similarly one obtains $D_{2n}$ (dihedral group of order $2n$)  if ${\bf e}=(2,2,n)$ ($n\geq 3$). 
If ${\bf e}=(2,3,4)$ $G$ must be $S_4$ as it cannot be any of the others. We obtain similarly that $G=A_4$ if ${\bf e}=(2,3,3)$
and $G= A_5$ if ${\bf e}=(2,3,5)$.



Next we show that in all these cases, if $r$ distinct points  $t_1,\ldots,t_r \in \Pp^1(\Cc)$ are fixed
($r=2$ or $r=3$), 
%
%
%
there is one and only one Galois extension $F/\Cc(T)$ of group $G$,
ramification indices ${\bf e} = (e_1,\ldots,e_r)$ and branch points $t_1,\ldots,t_r$. Furthermore, as ${\rm PGL}_2$ is $3$-transitive on $\Pp^1(\Cc)$, up to isomorphism, there is exactly one extension $F/\Cc(T)$ in each case.

From corollary \ref{cor:genus0-param}, this unique extension $F/\Cc(T)$ of group $G$ in each case is $\Cc(U)$-parametric.

Concerning uniqueness, one checks first that up to some (anti-)isomor\-phism
of $G$ (which does not change the Galois extension $F/\Cc(T)$), there is, for each $r$-tuple ${\bf e}$, a unique possible $r$-tuple ${\bf C}=(C_1,\ldots,C_r)$ and second, that this  $r$-tuple is {\it rigid}, that is: there is a unique  $r$-tuple $(g_1,\ldots ,g_r)\in C_1\times \cdots \times C_r$ such that $\langle g_1,\ldots g_r\rangle = G$ and $g_1\cdots g_r=1$, up to componentwise conjugation by an element of $G$. It then classically follows from the Riemann Existence Theorem that there is one and only one Galois extension $F/\Cc(T)$ as desired if in addition the branch points are fixed.

Below we produce in each case an example of an extension $F/\Cc(T)$ with the given invariants.
\vskip -2mm

\noindent 
\subsubsection{} $r=2$, $G=\Zz/d\Zz$ with $d\geq 1$: $\Cc(\root{d}\of{T})/\Cc(T)$ is a Galois extension of group $\Zz/d\Zz$ branched at $0$ and $\infty$ with ramification indices $d$. 
\vskip -2mm

\noindent
\subsubsection{} ${\bf e}= (2,2,2)$, $G=(\Zz/2\Zz)^2$: for $F=\Cc(\sqrt{T}, \sqrt{T-1})$, $F/\Cc(T)$ is a Galois extension of group $(\Zz/2\Zz)^2$. A primitive element of $F/\Cc(T)$ is for example $\sqrt{T}+ \sqrt{T-1}$. An affine equation is the polynomial 
$Y^4 + 2(1-2T)Y^2+ 1$. There are three branch points: $0$, $1$ and $\infty$, which all are of index $2$. For $T_0= T^2$, we have $F_{T_0} = \Cc(T,\sqrt{T^2-1})$ whose branch points are $1$ and $-1$.

\vskip 1,5mm

\noindent
For the other cases, we produce a generating set of $G$ of $3$ elements $g_1,g_2,g_3$ of orders $e_1$, $e_2$, $e_3$ and such that $g_1 g_2 g_3 = 1$.

\subsubsection{} ${\bf e}= (2,3,3)$, $G=A_4$: take  
\vskip 0,5mm

\centerline{$g_1 = (1\hskip 2pt 2) (3 \hskip 2pt 4)$, $g_2 = (1\hskip 2pt 2 \hskip 2pt 3)$, $g_3 = (2\hskip 2pt 3 \hskip 2pt 4)$}

\subsubsection{}  ${\bf e}= (2,3,4)$, $G=S_4$: take 
\vskip 0,5mm

\centerline{
 $g_1 = (1\hskip 2pt 2)$, $g_2 = (2\hskip 2pt 3 \hskip 2pt 4)$, $g_3 = (4\hskip 2pt 3 \hskip 2pt 2 \hskip 2pt 1)$}
\vskip 1mm

\noindent
(The congugacy classes of $g_1,g_2,g_3$ in $S_4$ being ``rational'', a standard argument shows further that, if one fixes the three branch points in $\Pp^1(\Qq)$, the unique corresponding extension $F/\Cc(T)$ is defined over $\Qq$).

\subsubsection{}   ${\bf e}= (2,3,5)$,  $G=A_5$: take 
\vskip 0,5mm

\centerline{ $g_1 = (1\hskip 2pt 5) (3 \hskip 2pt 4)$, $g_2 = (1\hskip 2pt 2 \hskip 2pt 4)$, $g_3 = (5\hskip 2pt 4 \hskip 2pt 3 \hskip 2pt 2 \hskip 2pt 1)$}

\subsubsection{}   ${\bf e}= (2,2,n)$, $e\geq 3$, $G=D_{2n}$: take $g_1$, $g_2$ two involutions with  $g_1g_2=g_3^{-1}$ generating the normal cyclic subgroup. For $n$ odd, an explicit example is the Galois extension $F/\Cc(T)$ with affine equation $Y^{2n}-TY^n+1$ which is branched at $2$, $-2$, $\infty$ with ramification indices $2$, $2$ and $n$. As it is $\Cc(U)$-parametric, it follows from the uniqueness conclusion of 
theorem \ref{thm:order-invariants} (b) that it is isomorphic to the Hashimoto-Mihake generic extension for $D_{2n}$ mentioned in remark \ref{rem:param-gen}.

\vskip 1mm

\subsection{Theorem \ref{thm:order-invariants} (b)} \label{ssec:proof-continued}

\subsubsection{A preliminary lemma} \label{ssec:strict-growth} Retain the notation already introduced for theorem \ref{thm:order-invariants} (a).

\begin{lemma} \label{lem:r-strictly-grows-in-spec}  When $N=\deg(T_0)>1$, we have $r_{T_0} > r$ in each of the following cases:
\vskip 0,5mm

\noindent
{\rm (a)} $r\geq 5$,
\vskip 1mm

\noindent
{\rm (b)} $F\not=\Cc(T)$ and $\varepsilon \leq (r-2)/2$,
\vskip 1mm

\noindent
{\rm (c)} $N\geq 4$, $r=4$ and $\varepsilon \leq 3/2$,
\vskip 1mm

\noindent
{\rm (d)} $N\geq 4$, $r=3$ and $\varepsilon \leq 3/4$.
\end{lemma}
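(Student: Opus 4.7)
My plan is to derive Lemma \ref{lem:r-strictly-grows-in-spec} as a direct corollary of the two lower bounds in Theorem \ref{thm:r-in-spec}(b). The guiding observation is that whenever $F \neq \Cc(T)$ (which holds under each of the four hypotheses, either explicitly in (b) or automatically when $r \geq 3$ as in (c), (d)), the integer $e_\infty$ is finite and $\geq 2$, so the factor $1/(1-1/e_\infty)$ appearing in (b-1) is strictly greater than $1$. Consequently, once I show that the numerator $(r-\varepsilon-2)N + 2$ in (b-1) is positive and $\geq r$, dividing by $1-1/e_\infty$ yields the strict inequality $r_{T_0} > r$.

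For case (a), I would avoid (b-1) entirely and use (b-2): the difference $(r-4)N + 4 - r$ factors as $(r-4)(N-1)$, strictly positive once $r \geq 5$ and $N \geq 2$.

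For case (b), I first note that $\varepsilon > 0$ (from $F \neq \Cc(T)$) together with $\varepsilon \leq (r-2)/2$ forces $r \geq 3$. Then $r-\varepsilon-2 \geq (r-2)/2$, so the numerator in (b-1) is at least $(r-2)N/2 + 2$, which with $N \geq 2$ is $\geq r$; the strict inequality follows from the observation above. Cases (c) and (d) follow the same template: with $r=4$ (resp.\ $r=3$) and the given bound on $\varepsilon$, one has $r-\varepsilon-2 \geq 1/2$ (resp.\ $\geq 1/4$), so the numerator in (b-1) is at least $N/2 + 2$ (resp.\ $N/4 + 2$). Feeding in $N \geq 4$ makes this $\geq r$, and again the strict passage is afforded by $1/(1-1/e_\infty) > 1$.

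The main---and mild---obstacle I anticipate is ensuring strict rather than merely weak inequality in each of (b), (c), (d), since in several boundary cases (e.g.\ (b) with $N=2$ and $\varepsilon = (r-2)/2$, or (c) and (d) with $N=4$) the numerator of (b-1) equals $r$ exactly; the uniform device of using $1-1/e_\infty < 1$ as soon as a branch point exists handles this cleanly, while (a) is taken care of by the algebraic factorisation above.
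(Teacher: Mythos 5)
Your proof is correct and follows essentially the same route as the paper: case (a) via the bound $r_{T_0}\geq (r-4)N+4$ of Theorem \ref{thm:r-in-spec} (b-2), and cases (b)--(d) via (b-1), first bounding the numerator $(r-\varepsilon-2)N+2$ below by $r$ and then using $1-1/e_\infty<1$ to turn this into the strict inequality $r_{T_0}>r$. Your explicit attention to the strictness issue (positivity of the numerator so that dividing by $1-1/e_\infty$ genuinely increases it) is exactly the point the paper's inequality (*) relies on.
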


\begin{proof}
Assume $N>1$. If $r\geq 5$ as in (a), theorem \ref{thm:r-in-spec} (b-2) gives

\vskip 1,5mm

\centerline{$\displaystyle  r_{T_0}  \geq (r-4)N+4 \geq 2r-4 > r $}

\vskip 1,5mm

\noindent
From theorem \ref{thm:r-in-spec} (b-1), we have

\vskip 2mm

\noindent
(*) \hskip 30mm $\displaystyle  r_{T_0}  > (r-\varepsilon - 2) N +2$
\vskip 2mm

\noindent
Under the assumptions of (b), we deduce
\vskip 1,5mm

\centerline{$\displaystyle  r_{T_0}  > (\hskip 1mm \frac{r}{2}-1\hskip 1mm) N +2 \geq (\hskip 1mm \frac{r}{2}-1\hskip 1mm) 2 +2 = r$}
\vskip 1,5mm

\noindent
Finally $r_{T_0}>r$ easily follows from (*) above in the  last two cases (c) and (d). \end{proof}


%
%
%
%
%

%

\subsubsection{Proof of theorem \ref{thm:order-invariants} (b)} 
The only non-trivial point is the antisymmetry of $\specz$.
Let $F/\Cc(T)$ and $F^\prime/\Cc(T)$ be two non-isomorphic extensions in the set ${\mathcal E}^\ast$ such that $F/\Cc(T)\specz F^\prime/\Cc(T)$ and $F^\prime /\Cc(T)\specz F/\Cc(T)$. Let $T_0, T_0^\prime \in \Cc(T)$ such that $F^\prime/\Cc(T) = F_{T_0}/\Cc(T)$ and $F/\Cc(T) = F^\prime_{T^\prime_0}/\Cc(T)$ with $\deg(T_0) \geq 2$ and $\deg(T^\prime_0) \geq 2$. From theorem \ref{thm:order-invariants} (a), $F/\Cc(T)$ and $F^\prime/\Cc(T)$ have the same group $G$, the same branch point number $r$, the same inertia canonical invariant ${\bf C}$ and the same 
ramification indices ${\bf e}$.
We also have $F_{T_0T^\prime_0}/\Cc(T) = F/\Cc(T)$.
\vskip 0,5mm

Recall that $F/\Cc(T)\in {\mathcal E}^\ast$ means 
one of the following situations holds:
\vskip 0,5mm

\noindent
{\rm (a)} $G$ is of rank $\geq 4$,
\vskip 1mm

\noindent
{\rm (b)} $G$ is of rank $3$ and of odd order,
\vskip 1mm

\noindent
{\rm (c)} $G$ is of rank $2$ and order non divisible by $2$ or $3$,
\vskip 1mm


\noindent
{\rm (d)} $F$ is of genus $g=0$.
\vskip 1,5mm

\noindent
Each of the first three conditions further implies that\vskip 2mm

\noindent
(*) \hskip 3mm $r \geq 5$\hskip 2mm or \hskip 1mm(\hskip 1mm $\displaystyle r=4$ and $\displaystyle\varepsilon \leq \frac{4}{3}\leq \frac{3}{2}$\hskip 1mm )  \hskip 1mm or \hskip 1mm(\hskip 1mm $\displaystyle r=3$ and $\displaystyle\varepsilon \leq \frac{3}{5}\leq \frac{3}{4}$\hskip 1mm )
\vskip 2mm

\noindent
In the three cases, lemma \ref{lem:r-strictly-grows-in-spec} (applied with $N=\deg(T_0T^\prime_0)\geq 4$) yields a contradiction to $r_{T_0T^\prime_0} = r$. 

Suppose as in (d) that $F$ is of genus $0$. 
Then $F/\Cc(T)$ is one of the exceptional extensions described in \S \ref{ssec:exceptional-list}. But then so is $F^\prime/\Cc(T)$ as it has the same invariants $G$, $r$, ${\bf C}$, and again from \S \ref{ssec:exceptional-list}, it must be isomorphic to $F/\Cc(T)$, a contradiction. 
\vskip 0,5mm

For the second part of theorem \ref{thm:order-invariants} (b), fix a group $G\in {\mathcal G}^\ast$. If $G$ is not a subgroup of ${\rm PGL}_2(\Cc)$, all Galois extensions $L/\Cc(T)$ of group $G$ are in 
${\mathcal E}^\ast$ and a $\Cc(U)$-parametric extension $F/\Cc(T)$ of group $G$ is the smallest such extension for
the order $\prec$, hence is unique. If $G$ is a subgroup of ${\rm PGL}_2(\Cc)$, then $G$ has a $\Cc(U)$-parametric extension $F_{\rm m}/\Cc(T)$ (corollary \ref{cor:genus0-param}), which is an exceptional genus $0$ extension from \S \ref{ssec:exceptional-list}. If $F/\Cc(T)$ is another $\Cc(U)$-parametric extension of group $G$, it follows from $F_{\rm m}/\Cc(T) \prec F/\Cc(T)$ and $F/\Cc(T) \prec F_{\rm m}/\Cc(T)$ that  $F/\Cc(T)$ has the same invariants as $F_{\rm m}/\Cc(T)$, and so, as above, the two must be isomorphic. 

\subsubsection{An example}
Here is an example for which we have $(G,r,g,{\bf C}) =  (G_{T_0},r_{T_0},g_{T_0},{\bf C}_{T_0})$ but  $F/\Cc(T)$ and $F_{T_0}/\Cc(T)$ are not isomorphic, and so $N>1$. We do not know whether an example exists for which 
$F/\Cc(T)$ could additionally be shown to be a specialization of $F_{T_0}/\Cc(T)$ (which would show that the pre-order $\prec$ is not an order). 

Take $G=D_{2n}$ with $n$ odd and $F/\Cc(T)$ a Galois extension of group $G$ with branch points $0,1,-1,\lambda$ with $\lambda \in \Cc\setminus \{0,\pm1\}$ and ramification indices ${\bf e}=(2,2,2,2)$; such an extension exists from the RET and the easy construction of a generating set of $G$ of four elements $g_1,\ldots, g_4$ of order $2$ and such that $g_1\cdots g_4 = 1$.

Take $T_0(U) = U^2/(2U^2-2U+1)$. One checks that $T_0(u)=0$ has a double root, $u=0$, and that 
$T_0(u)=1$ has a double root, $u=1$. It follows that both $T_0(u)=-1$ and $T_0(u)=\lambda$ have two distinct roots (because of inequality (2) of the proof of theorem \ref{thm:r-in-spec}). From the analysis of the ramification in specialized extensions in the proof of theorem \ref{thm:r-in-spec} (more particularly from (*) and (**)), we obtain that 
$F_{T_0}/\Cc(T)$ has $r_{T_0}=4$ branch points, with ramification indices $2$.

The extensions $F/\Cc(T)$ and $F_{T_0}/\Cc(T)$ are not isomorphic. Otherwise the cross-ratio of their branch points would be equal, up to the sign. The cross-ratio of $0,1,-1,\lambda$ is $(\lambda-1)/(2\lambda)$. The branch points of $F_{T_0}/\Cc(T)$ are the simple roots of $T_0(u)=1$ and $T_0(u)=\lambda$. Take for example $\lambda = 1/5$. These four points are then $(1\pm \sqrt{-2})/3$, $-1$ and $1/3$. A final computation shows the corresponding cross-ratio is $(16+4\sqrt{-2})/9$ while $(\lambda-1)/(2\lambda) = -2$.

Assume $G_{T_0}\not=G$. From theorem \ref{thm:r-in-spec} (a), there exists a sub-extension $L/\Cc(T)$ of $F/\Cc(T)$ such that $L\not=\Cc(T)$, $L_{T_0} = \Cc(T)$ and $L$ of genus $0$. Write $L=\Cc(\theta)$ for some $\theta\in F$ and $T=A(\theta)/B(\theta)$ with $A,B \in \Cc[X]$ re\-la\-ti\-vely prime, $B\not=0$. The irreducible polynomial of $\theta$ over $\Cc(T)$ is $A(Y)-TB(Y)$. Then $L_{T_0} = \Cc(U)$ means that $A(Y)-T_0(U)B(Y)$ has a root $Y_0(U)\in \Cc(U)$. But then we have $T_0(U) = A(Y_0(U))/B(Y_0(U))$. As we explain in the last paragraph, $T_0$ is indecomposable so necessarily $A/B = T_0$ and so $L$ does not depend on $\lambda$. The next paragraph provides a contradiction by showing that $L$ is ramified over $\lambda$.

The Galois group $\Gal(F/L)$ cannot be a subgroup of the cyclic subgroup of order $n$ of $D_{2n}$: otherwise, with $d_L=[L:\Cc(T)]$, the Riemann-Hurwitz formula yields $-2 = -2 d_L + 4d_L/2$, a contradiction. Therefore $L$ is the fixed field in $F$ of some involution of $D_{2n}$. The Riemann-Hurwitz formula gives: $-2=-2n+ R$ where $R$ is the number of ramified primes. Conclude that above each of $0,1,-1, \lambda$, the number of ramified points is the maximum possible: $(n-1)/2$.

That $T_0$ is indecomposable is an exercise for which we only indicate the main steps.
Deduce from $T_0(U)=A(Y_0(U))/B(Y_0(U))$ that $A(Y_0(U))=K(U) U^2$ and $B(Y_0(U))=K(U)(2U^2-2U+1)$ for some $K\in \Cc(U)$. Writing $Y_0(U)=\alpha(U)/\beta(U)$ with $\alpha,\beta \in \Cc[U]$ relatively prime, show next that necessarily $Y_0(U)\in \Cc[U]$ and $K(U)\in \Cc$. The desired conclusion easily follows.

\section{Twisting regular Galois extensions in families} \label{sec:twisting} 
Here $k$-regular extensions $F/k(T)$ are viewed as fundamental group representations, as recalled in \S \ref{ssec:fund-group-basics}. \S \ref{ssec:twisting} recalls the twisting operation on covers and the twisting lemma (\S \ref{ssec:twisting_lemma}). 
\S \ref{ssec:twisting-families} explains how the twisting lemma can be used ``in family''. \S \ref{ssec:statement-main-thm} states theorem \ref{thm:main}, which the main result of this section; theorem \ref{thm:twist-main} is a special case. Theorem \ref{thm:main} is proved in \S \ref{ssec:proof-main-thm}. 



\subsection{Fundamental groups representations} \label{ssec:fund-group-basics}
%
%
The absolute Galois group of a field $K$ is denoted by $\Gabs_K$. 
If $E/K$ is a Galois extension of group $G$, an epimorphism $\varphi: \Gabs_K\rightarrow G$ such that $E$ is the fixed field of ${\rm ker}(\varphi)$ in $\overline K$ is a called a {\it $\Gabs_K$-representation} of $E/K$.

Given a finite subset ${\mathbf t}\subset \Pp^1(\overline K)$ invariant under $\Gabs_K$, the $K$-funda\-men\-tal group of $\Pp^1\setminus {\mathbf t}$ is denoted by $\pi_1(\Pp^1\setminus {\mathbf t}, t)_K$; here $t$ denotes a fixed {\it base point}, which corresponds to choosing an embedding of $K(T)$ in an algebraically closed field $\Omega$. The (geometric) $\overline K$-fundamental group $\pi_1(\Pp^1\setminus {\mathbf t}, t)_{\overline K}$ is defined as the Galois group of the maximal algebraic extension $\Omega_{{\mathbf t},K}/\overline K(T)$ (inside $\Omega$) unramified above $\Pp^1\setminus {\mathbf t}$ and the (arithmetic) $K$-fundamental group $\pi_1(\Pp^1\setminus {\mathbf t}, t)_{K}$ as the group of the Galois extension $\Omega_{{\mathbf t},k}/K(T)$. 

Degree $d$ $K$-regular extensions $F/K(T)$ (resp. $K$-regular Galois extensions $F/K(T)$ of group $G$) with branch points in $\mathbf t$ correspond to transitive homomorphisms $\pi_1(\Pp^1\setminus {\mathbf t}, t)_{K} \rightarrow S_d$ (resp. to epimorphisms $\pi_1(\Pp^1\setminus {\mathbf t})_{K},t) \rightarrow G$), with the extra regularity condition that the restriction of $\phi$ to $\pi_1(\Pp^1\setminus {\mathbf t})_{K},t)_{\overline K}$ remains transitive (resp. remains onto). These corresponding homomorphisms are called the {\it fundamental group representations} (or {\it $\pi_1$-representations} for short) of the $K$-regular (resp. $K$-regular Galois) extension $F/K(T)$.

Each $K$-rational point $t_0\in \Pp^1(K)\setminus {\mathbf t}$ naturally provides a section
${\sf s}_{t_0}: \Gabs_K\rightarrow \pi_1(\Pp^1\setminus {\mathbf t},t)_{K}$ to the exact sequence
\vskip 1,5mm

\centerline{$ 1\rightarrow \pi_1(\Pp^1\setminus {\mathbf t}, t)_{\overline K} \rightarrow \pi_1(\Pp^1\setminus {\mathbf t},t)_K \rightarrow \Gabs_K \rightarrow 1$} 
\vskip 1,5mm

\noindent
which is uniquely defined up to conjugation by an element in the fundamental group $\pi_1(\Pp^1\setminus {\mathbf t}, t)_{\overline K}$.

 If $\phi: \pi_1(\Pp^1\setminus {\mathbf t}, t)_K \rightarrow G$ represents a $K$-regular Galois extension $F/K(T)$, the morphism $\phi \circ {\sf s}_{t_0}:\Gabs_K \rightarrow G$ is the {\it specialized representation} of $\phi$ at $t_0$. The fixed field in $\overline K$ of ${\rm ker}(\phi \circ {\sf s}_{t_0})$ is the specialized extension $F_{t_0}/K$ of $F/K(T)$
 at $t_0$.
\vskip 3mm

\subsection{Twisting regular Galois extensions} \label{ssec:twisting}

For this subsection, we refer to \cite{DEGha}.

\subsubsection{The twisting lemma} \label{ssec:twisting_lemma}
Fix a field $K$ and a $K$-regular Galois extension ${\mathcal F}/K(T)$ 
of group $G$, also viewed as $K$-regular Galois cover $f:X\rightarrow \Pp^1$. Recall how it can be twisted by a 
Galois extension $E/K$ of group $H\subset G$. Formally this is done in terms of the associated $\pi_1$- 
and $\Gabs_K$- representations.  

Let $\phi: \pi_1(\Pp^1\setminus {\mathbf t}, t)_K \rightarrow G$ be a $\pi_1$-representation of ${\mathcal F}/K(T)$ and $\varphi:\Gabs_K \rightarrow G$ be a $\Gabs_K$-representation of  the Galois extension $E/K$.

Denote the right-regular (resp. left-regular) representation of $G$ by $\delta: G\rightarrow S_d$ (resp. by $\gamma: G\rightarrow S_d$) where $d=|G|$. 
Consider the map 
\vskip 1mm

\centerline{$\widetilde \phi^\varphi:
\pi_1(\Pp^1\setminus {\mathbf t}, t)_K \rightarrow S_d$} 
\vskip 1mm

\noindent
defined by the following formula, where $R$ is the restriction map $\pi_1(\Pp^1\setminus {\mathbf t}, t)_K \rightarrow \Gabs_K$ and $\times$ is the multiplication in the symmetric group $S_d$:
\vskip 2mm

\noindent
(*)  \hskip 12mm $\widetilde \phi^\varphi (\Theta)  = \gamma(\phi (\Theta)) \times \delta (\varphi (R(\Theta))^{-1}) \hskip 6mm (\Theta\in \pi_1(\Pp^1\setminus {\mathbf t}, t)_K). $
\vskip 2mm

\noindent
The map $\widetilde \phi^\varphi$ is a group homomorphism
with the same restriction on $\pi_1(\Pp^1\setminus {\mathbf t}, t)_{\overline K}$
as $\phi$. It is called  the {\it twisted representation} of 
$\phi$ by $\varphi$. 

The associated $K$-regular extension is denoted by $\widetilde {\mathcal F}^\varphi/K(T)$ and  called  the {\it twisted extension} of 
${\mathcal F}/K(T)$ by $\varphi$. The field $\widetilde {\mathcal F}^\varphi$ is the fixed field in $\Omega_{{\mathbf t},K}$ of the subgroup $\Gamma\subset \pi_1(\Pp^1\setminus {\mathbf t}, t)_{{K}}$ of all $\Theta$ such that $\widetilde \phi^\varphi (\Theta)$ fixes the neutral element of $G$ \footnote{Taking any other element of $G$ gives the same field up to $K(T)$-isomorphism.}. Finally the corresponding $K$-regular cover, the {\it twisted cover} of 
$f$ by $\varphi$, is denoted by $\widetilde f^\varphi: \widetilde X^\varphi \rightarrow \Pp^1$.


The following statement is the main property 
of the twisted cover.

\begin{twisting lemma} \label{prop:twisted cover} 
Let $t_0\in \Pp^1(K)\setminus {\mathbf t}$. The specialization representation $\phi \circ {\sf s}_{t_0}:\Gabs_K \rightarrow G$ 
is conjugate in $G$ to $\varphi:\Gabs_K \rightarrow G$ if and only if there exists $x_0\in \widetilde X^\varphi(K)$ such that $\widetilde f^\varphi(x_0)=t_0$.
\end{twisting lemma}

As a first illustration, we prove the following statement, which we have alluded to several times.

\begin{corollary} \label{cor:genus0-param}
If  $F/\Cc(T)$ is a Galois extension of group $G$ with $F$ of genus $0$, then
$F/\Cc(T)$ is $\Cc(U)$-parametric.
\end{corollary}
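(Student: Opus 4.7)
The plan is to combine the twisting lemma with Tsen's theorem. Fix an arbitrary Galois extension $L/\Cc(U)$ of group $H \subset G$ and let $\varphi : \Gabs_{\Cc(U)} \rightarrow H \hookrightarrow G$ be a $\Gabs_{\Cc(U)}$-representation of $L/\Cc(U)$. Viewing $F\Cc(U)/\Cc(U)(T)$ as a $\Cc(U)$-regular Galois extension of group $G$, I would fix a $\pi_1$-representation $\phi$ of it and form the twisted cover $\widetilde f^\varphi : \widetilde X^\varphi \rightarrow \Pp^1_{\Cc(U)}$.

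The key step is to observe that $\widetilde X^\varphi$ is a $\Cc(U)$-twist of $X_{\Cc(U)}$: by construction $\widetilde \phi^\varphi$ and $\phi$ agree on the geometric fundamental group, so after base change to $\overline{\Cc(U)}$ the two covers become isomorphic. Hence $\widetilde X^\varphi$ is a smooth, projective, geometrically integral curve of genus $0$ over $\Cc(U)$, in other words a $1$-dimensional Brauer-Severi variety. By Tsen's theorem, $\Br(\Cc(U))=0$, so $\widetilde X^\varphi$ is split: $\widetilde X^\varphi \simeq \Pp^1_{\Cc(U)}$, and in particular $\widetilde X^\varphi(\Cc(U))$ is infinite.

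To finish, I would discard the finitely many $\Cc(U)$-rational points of $\widetilde X^\varphi$ lying above the branch locus ${\bf t}$ of $F/\Cc(T)$ and pick any remaining $x_0 \in \widetilde X^\varphi(\Cc(U))$. Setting $T_0 = \widetilde f^\varphi(x_0) \in \Pp^1(\Cc(U))\setminus {\bf t}$, the twisting lemma will yield that $\phi \circ {\sf s}_{T_0}$ is $G$-conjugate to $\varphi$, whence $F_{T_0}/\Cc(U) \simeq L/\Cc(U)$ as $\Cc(U)$-extensions. Since $L/\Cc(U)$ is arbitrary, $F/\Cc(T)$ will be $\Cc(U)$-parametric.

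The main, and essentially only non-routine, point will be justifying that $\widetilde X^\varphi$ is genuinely a Brauer-Severi curve over $\Cc(U)$: this uses crucially that $X$, being of genus $0$ over the algebraically closed field $\Cc$, is already $\Pp^1_\Cc$, so every geometric fibre of the $\Cc(U)$-form $\widetilde X^\varphi$ is $\Pp^1$. Once that is in hand, Tsen's theorem and the twisting lemma directly finish the job.
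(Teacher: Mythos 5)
Your proposal is correct and follows essentially the same route as the paper's own proof: twist $F\Cc(U)/\Cc(U)(T)$ by a $\Gabs_{\Cc(U)}$-representation of $L/\Cc(U)$, note that the twisted cover is a genus-$0$ form (hence $\Pp^1$ over $\Cc(U)$ by Tsen), and conclude with the twisting lemma. The only difference is that you spell out the Brauer--Severi/twist justification and the step of avoiding the finitely many rational points over the branch locus, which the paper leaves implicit.
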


\begin{proof}
Let $F/\Cc(T)$ as above and $L/\Cc(U)$ be a Galois extension of group $H\subset G$. Let $\phi: \pi_1(\Pp^1\setminus {\mathbf t}, t)_{\Cc(U)} \rightarrow G$ be a $\pi_1$-representation of $F\Cc(U)/\Cc(U)(T)$ and $\varphi:\Gabs_{\Cc(U)} \rightarrow H\subset G$ be a $\Gabs_{\Cc(U)}$-representation of  the Galois extension $L/\Cc(U)$.
Set ${\mathcal F}=F\Cc(U)$ and consider the twisted extension $\widetilde {\mathcal F}^\varphi/\Cc(U)(T)$ and the associated twisted cover $\widetilde X^\varphi \rightarrow \Pp^1$. The extension ${\mathcal F}\overline{\Cc(U)}/\overline{\Cc(U)}(T)$ and $F\overline{\Cc(U)}/\overline{\Cc(U)}(T)$ are $\overline{\Cc(U)}(T)$-isomorphic. Consequently $\widetilde X^\varphi$ has the same genus as $F$, that is $0$. From Tsen's theorem, $\widetilde X^\varphi$ has a $\Cc(U)$-rational point and is isomorphic to $\Pp^1$ over $\Cc(U)$. Conclude thanks to lemma \ref{prop:twisted cover}  that $L/\Cc(U)$ is 
a $\Cc(U)$-specialization of $F/\Cc(T)$.
\end{proof}

It is a similar argument that proves that if $K$ is a Pseudo Algebraically Closed field, then every $K$-regular Galois extension $F/K(T)$ is $K$-parametric \cite[\S 3.3.1]{DeBB}.

\subsection{Twisting in families} \label{ssec:twisting-families}

Consider the twisted extension  $\widetilde {\mathcal F}^\varphi/K(T)$ when $K=k(U)$ with $k$ a field and $U$ some indeterminate.

\subsubsection{Description of the twisted extension}  \label{ssec:description-twisted}
%
%
Every element $\Theta$ in the $K$-fundamental group $\pi_1(\Pp^1\setminus {\mathbf t}, t)_K$ uniquely writes $ \Theta = \chi \hskip 2pt {\sf s}_U(\sigma)$ with $\chi \in \pi_1(\Pp^1\setminus {\mathbf t}, t)_{\overline{K}}$ and $\sigma\in \Gabs_{K}$.
Whence
\vskip 1,5mm

\centerline{$\left\{
\begin{matrix} 
\phi (\Theta)  = \phi (\chi) \hskip 2pt \phi ({\sf s}_U(\sigma)) \hfill \cr
\varphi (R(\Theta)) = \varphi(\sigma) \hfill \cr
\end{matrix}
\right.
$}
\vskip 1,5mm

\noindent
and the following formula, where, by ${\rm conj}(g)$  ($g\in G$),  we denote the permutation of $G$ induced by the conjugation $x\rightarrow gxg^{-1}$:
\vskip 2mm

\centerline{$\widetilde \phi^\varphi (\Theta)  = \gamma(\phi (\chi) \hskip 2pt \phi ({\sf s}_U(\sigma)) \hskip 2pt \varphi(\sigma)^{-1}) \times {\rm conj}(\varphi(\sigma)).$}
\vskip 2mm

\noindent
Conclude that the field $\widetilde {\mathcal F}^\varphi$ is the fixed field in $\Omega_{{\mathbf t},K}$ of the following subgroup $\Gamma\subset \pi_1(\Pp^1\setminus {\mathbf t}, t)_{{K}}$:

\vskip 2mm

\centerline{$\Gamma=\{\chi \hskip 1pt {\sf s}_U(\sigma) \in  \pi_1(\Pp^1\setminus {\mathbf t}, t)_{{K}}
\hskip 2pt | \hskip 2pt \phi (\chi) = \hskip 2pt \varphi(\sigma)\hskip 1pt \phi ({\sf s}_U(\sigma)) ^{-1}
\}$}
\vskip 2mm

%
%
%
%
%
%
%

\subsubsection{The fiber-twisted cover at $u_0$} \label{ssec:fiber-twisted} 
The two extensions ${\mathcal F}/K(T)$ and $\widetilde{{\mathcal F}}^{\varphi}/K(T)$ are $K$-regular. From the Grothendieck good reduction theorem, for every $u_0\in k$ but in a finite subset ${\mathcal E}$, 
they specialize at $U=u_0$ to respective extensions ${\mathcal F}|_{u_0}/k(T)$ and $(\widetilde {\mathcal F}^\varphi)|_{u_0}\hskip 2pt /\hskip 1pt k(T)$ that are $k$-regular of degree
\vskip 1,5mm
 
\centerline{$[\widetilde{{\mathcal F}}^{\varphi}:k(U)(T)]=[{\mathcal F}:K(T)] = d$,}
\vskip 1,5mm

\noindent
have branch point set ${\bf t}_{u_0}$ and have the same genus as the common genus of the function fields ${\mathcal F}$ and $\widetilde {\mathcal F}^\varphi$. 

Using the embedding of $\overline{k(U)}$ in the field of Puiseux series in $U-u_0$ and coefficients in $\overline k$, we have a natural monomorphism
\vskip 2mm
 
\centerline{${\sf s}_{u_0}: \Gabs_k \rightarrow \Gabs_{K}$}
\vskip 2mm

\noindent
The morphism $\varphi \circ {\sf s}_{u_0}:\Gabs_k \rightarrow G$ is well-defined and so is the twisted extension 
\vskip 1mm
 
\centerline{$\widetilde{\left({\mathcal F}|_{u_0}\right)}^{\varphi\circ  {\rm s}_{u_0}}/\hskip 1pt k(T)$}
\vskip 1,5mm

\noindent
We call it the {\it fiber-twisted extension} at $u_0$. If $\phi |_{u_0} : \pi_1(\Pp^1\setminus {\bf t}_{u_0},t)_k \rightarrow G$ is a $\pi_1$-representation of the $k$-regular Galois extension ${\mathcal F}|_{u_0}/k(T)$, then a $\pi_1$-representation of the  twisted extension above is 
\vskip 2mm 
 
\centerline{$\widetilde{\phi |_{u_0}}^{\varphi \circ {\sf s}_{u_0}}: \pi_1(\Pp^1\setminus {\mathbf t}_{u_0}, t)_{k} \rightarrow S_d$}
\vskip 2mm

%
%
%

Every element $\theta\in \pi_1(\Pp^1\setminus {\mathbf t}, t)_{k}$ uniquely writes $\theta = x\hskip 2pt {\sf s}_{u_0}(\tau)$ with $x \in \pi_1(\Pp^1\setminus {\mathbf t}, t)_{\overline{k}}$ and $\tau \in \Gabs_{k}$. Similarly as in \S \ref{ssec:description-twisted} 
we obtain:

\vskip 3mm

\centerline{$\widetilde{\phi |_{u_0}}^{\varphi \circ {\sf s}_{u_0}} (\theta)  = \gamma\left(\phi |_{u_0}(x) \hskip 1pt \phi |_{u_0}({\sf s}_{u_0}(\tau)\right) \hskip 1pt \varphi({\sf s}_{u_0}(\tau))^{-1}) \times {\rm conj}(\varphi({\sf s}_{u_0}(\tau)))$}
\vskip 3mm

\noindent
The field $\widetilde{\left({\mathcal F}|_{u_0}\right)}^{\varphi\circ  {\rm s}_{u_0}}$ is the fixed field in $\Omega_{{\mathbf t}_{u_0},k}$ of the following subgroup $\Gamma_{u_0}$ of $\pi_1(\Pp^1\setminus {\mathbf t}_{u_0}, t)_{{k}}$:


\vskip 3mm

\centerline{$\Gamma_{u_0}=\{x\hskip 1pt {\sf s}_{u_0}(\tau) 
\hskip 3pt | \hskip 3pt \phi |_{u_0}(x) = \hskip 2pt \varphi ({\sf s}_{u_0}(\tau)) \hskip 1pt \phi |_{u_0}({\sf s}_{u_0}(\tau))^{-1}
\}$}
\vskip 3mm


%
%
\subsection{Comparison statement} \label{ssec:statement-main-thm} 
\begin{theorem} \label{thm:main}
For all but finitely many $u_0\in k$, the two extensions 
\vskip 2mm 

\centerline{$(\widetilde {\mathcal F}^\varphi)|_{u_0}\hskip 2pt /\hskip 1pt k(T)$ and  $\widetilde{\left({\mathcal F}|_{u_0}\right)}^{\varphi\circ  {\rm s}_{u_0}}/\hskip 1pt k(T)$}
\vskip 2mm

\noindent
are well-defined and are $k(T)$-isomorphic.
\end{theorem}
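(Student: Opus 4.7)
The plan is to show that both specializations admit $\pi_1$-representations which are literally equal as group homomorphisms to $S_d$; since these representations determine the associated $k$-regular extensions up to $k(T)$-isomorphism, the conclusion follows. The point is that twisting and specialization at $u_0$ commute because, at the level of formulas, the ingredient $\phi|_{u_0}$ of the ``twist then specialize'' side matches the ingredient $\phi \circ {\sf sp}_{u_0}$ of the ``specialize then twist'' side, and similarly $\varphi \circ {\sf s}_{u_0}$ corresponds on both sides to the $\Gabs_k$-part of the twisting datum.

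First I would invoke Grothendieck's good reduction for the fundamental group: for all but finitely many $u_0 \in k$, the branch set ${\bf t}$ specializes injectively to ${\bf t}_{u_0}$, and there is a continuous specialization homomorphism
\[
{\sf sp}_{u_0} : \pi_1(\Pp^1 \setminus {\bf t}_{u_0}, t)_k \longrightarrow \pi_1(\Pp^1 \setminus {\bf t}, t)_K
\]
which projects to ${\sf s}_{u_0}: \Gabs_k \to \Gabs_K$ on the Galois quotient and satisfies ${\sf sp}_{u_0}({\sf s}_{u_0}(\tau)) = {\sf s}_U({\sf s}_{u_0}(\tau))$ for every $\tau \in \Gabs_k$. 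By definition of the specialized extension through good reduction, $\phi \circ {\sf sp}_{u_0}$ is a $\pi_1$-representation of ${\mathcal F}|_{u_0}/k(T)$, so up to the standard conjugation ambiguity from the choice of base points we may set $\phi|_{u_0} = \phi \circ {\sf sp}_{u_0}$. The same theorem applied to $\widetilde{\mathcal F}^\varphi/K(T)$ gives that $\widetilde\phi^\varphi \circ {\sf sp}_{u_0}$ is a $\pi_1$-representation of $(\widetilde{\mathcal F}^\varphi)|_{u_0}/k(T)$, after removing a further finite set of bad $u_0$.

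Next I would use the description of \S \ref{ssec:description-twisted} and \S \ref{ssec:fiber-twisted}. Write $\theta \in \pi_1(\Pp^1 \setminus {\bf t}_{u_0}, t)_k$ uniquely as $\theta = x \cdot {\sf s}_{u_0}(\tau)$ with $x$ geometric and $\tau \in \Gabs_k$. Then ${\sf sp}_{u_0}(\theta) = {\sf sp}_{u_0}(x) \cdot {\sf s}_U({\sf s}_{u_0}(\tau))$ is the analogous decomposition inside $\pi_1(\Pp^1 \setminus {\bf t}, t)_K$, with geometric part $\chi = {\sf sp}_{u_0}(x)$ and arithmetic part $\sigma = {\sf s}_{u_0}(\tau)$. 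Applying formula (*) of \S \ref{ssec:twisting_lemma} to ${\sf sp}_{u_0}(\theta)$, and using $R({\sf sp}_{u_0}(\theta)) = {\sf s}_{u_0}(\tau)$ together with $\phi({\sf sp}_{u_0}(\theta)) = \phi|_{u_0}(\theta)$, one gets
\[
\widetilde\phi^\varphi\bigl({\sf sp}_{u_0}(\theta)\bigr) = \gamma(\phi|_{u_0}(\theta)) \times \delta\bigl((\varphi \circ {\sf s}_{u_0})(\tau)^{-1}\bigr).
\]
On the other hand, applying formula (*) directly to the fiber-twisted representation of \S \ref{ssec:fiber-twisted}, with the same decomposition of $\theta$, yields the identical expression for $\widetilde{\phi|_{u_0}}^{\varphi \circ {\sf s}_{u_0}}(\theta)$. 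Thus the two $\pi_1$-representations agree on the nose; the stabilizers in $S_d$ of the neutral element of $G$ coincide, whence the fixed fields inside $\Omega_{{\bf t}_{u_0},k}$ coincide, proving the desired $k(T)$-isomorphism.

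The main obstacle is the clean set-up of the specialization map ${\sf sp}_{u_0}$ and the verification of its three compatibility properties (projection to ${\sf s}_{u_0}$, image of sections, and the identification $\phi \circ {\sf sp}_{u_0} \equiv \phi|_{u_0}$). This ultimately rests on Grothendieck's good reduction theorem for the relative cover attached to ${\mathcal F}$ over a neighbourhood of $u_0$ in $\Spec k[U]$, together with the Puiseux embedding used to define ${\sf s}_{u_0}$; the argument is standard but requires careful tracking of base points, and can be collected in the appendix alongside the Specialization Inertia Theorem. Once this is in place, the comparison of formulas is purely formal, and the ``well-defined'' clause of the theorem amounts to excluding the (finite) union of the bad loci of ${\mathcal F}$, of $\widetilde{\mathcal F}^\varphi$, and of ${\sf sp}_{u_0}$.
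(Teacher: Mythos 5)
Your proposal is correct and follows essentially the same route as the paper: reduce to comparing $\pi_1$-representations, handle the geometric part via the specialization isomorphism from Grothendieck's good reduction theorem, and handle the arithmetic part via the compatibility $\phi|_{u_0}({\sf s}_{u_0}(\tau)) = \phi({\sf s}_U\circ {\sf s}_{u_0}(\tau))$, after which matching the two twisting formulas is purely formal. The one step you defer as ``standard'' is precisely the paper's Lemma \ref{lem:arith-action}, which is not obtained by citing good reduction again but by an explicit argument: expand the conjugates of a primitive element as power series in $T-U$ and invoke Eisenstein's theorem to guarantee that the coefficients can be specialized at $U=u_0$ outside a finite set, so that the arithmetic action genuinely commutes with specialization.
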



\begin{corollary}  \label{cor:twist-main}
Let $\widetilde P^\varphi \in k[U,T,Y]$ be an affine equation of $\widetilde {\mathcal F}^\varphi/K(T)$. 
For all but finitely many $u_0\in k$, the polynomial 
\vskip 2mm 

\centerline{$\widetilde P^\varphi(u_0,T,Y)$}
\vskip 1mm
 
\noindent
is an affine equation of the $k$-regular extension $\widetilde{\left({\mathcal F}|_{u_0}\right)}^{\varphi\circ  {\rm s}_{u_0}}/\hskip 1pt k(T)$. Consequently, for all but finitely many $u_0\in k$ and for all $t_0 \notin ({\bf t}|_{u_0}\cup \{\infty\})$, we have this criterion
\vskip 1,5mm

\noindent
{\rm (*)} {\it there exists $y_0\in k$ such that $\widetilde P^\varphi(u_0,t_0,y_0)=0$ if and only if the specialization representation $\phi |_{u_0} \circ {\rm s}_{t_0}$  is conjugate in $G$ to $\varphi\circ  {\rm s}_{u_0}$.}
\end{corollary}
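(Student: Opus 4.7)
The plan is to combine Theorem~\ref{thm:main} with the twisting lemma~\ref{prop:twisted cover} in three steps: first, show that for all but finitely many $u_0\in k$ the naive specialization $\widetilde P^\varphi(u_0,T,Y)$ remains an affine equation of the specialized $k$-regular extension $(\widetilde{\mathcal F}^\varphi)|_{u_0}/k(T)$; second, use Theorem~\ref{thm:main} to identify this extension with the fiber-twisted extension $\widetilde{({\mathcal F}|_{u_0})}^{\varphi\circ {\sf s}_{u_0}}/k(T)$, which establishes the first assertion of the corollary; third, apply the twisting lemma to the pair $({\mathcal F}|_{u_0},\varphi\circ {\sf s}_{u_0})$ and translate the conclusion into the stated criterion on $k$-rational roots of $\widetilde P^\varphi(u_0,t_0,Y)$.

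For the first step, recall that $\widetilde P^\varphi$ is monic in $Y$ of degree $d=|G|$, with coefficients in $k(U)[T]$, and irreducible in $\overline{K}[T,Y]$. The set of $u_0\in k$ to exclude is finite: it comprises the poles in $U$ of the coefficients of $\widetilde P^\varphi$; the roots of the content in $U$ of the (nonzero) $Y$-discriminant of $\widetilde P^\varphi$; and the $u_0$ at which $\widetilde P^\varphi(u_0,T,Y)$ fails to be absolutely irreducible in $\overline k[T,Y]$ --- this last locus is finite by a standard Bertini-type argument for one-parameter families of absolutely irreducible polynomials (compare \S\ref{sec:preliminaries}). For $u_0$ outside the union of these sets and the exceptional set from Theorem~\ref{thm:main}, the polynomial $\widetilde P^\varphi(u_0,T,Y)$ is monic in $Y$ of degree $d$, separable and absolutely irreducible over $\overline k[T,Y]$, and hence serves as an affine equation for a $k$-regular extension of $k(T)$ whose function field is $(\widetilde{\mathcal F}^\varphi)|_{u_0}$. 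Theorem~\ref{thm:main} then identifies this field, $k(T)$-isomorphically, with $\widetilde{({\mathcal F}|_{u_0})}^{\varphi\circ {\sf s}_{u_0}}$, which proves the first statement of the corollary.

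For the criterion (*), I apply lemma~\ref{prop:twisted cover} to the pair $({\mathcal F}|_{u_0}/k(T),\,\varphi\circ {\sf s}_{u_0})$: for every $t_0\in \Pp^1(k)\setminus ({\mathbf t}|_{u_0}\cup\{\infty\})$, the specialization representation $\phi|_{u_0}\circ {\sf s}_{t_0}$ is conjugate in $G$ to $\varphi\circ {\sf s}_{u_0}$ if and only if the fiber of $\widetilde f|_{u_0}^{\varphi\circ {\sf s}_{u_0}}\colon \widetilde X|_{u_0}^{\varphi\circ {\sf s}_{u_0}}\to \Pp^1_k$ above $t_0$ contains a $k$-rational point. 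Since, by the first step, $\widetilde P^\varphi(u_0,T,Y)$ is an affine equation of this fiber-twisted cover, $t_0\neq \infty$, and $t_0$ is not a branch point, the $d$ points of this fiber correspond bijectively, compatibly with the $\Gabs_k$-action, to the $d$ simple roots of $\widetilde P^\varphi(u_0,t_0,Y)\in k[Y]$; hence $k$-rational fiber points correspond exactly to $k$-rational roots $y_0$. The main delicate point in the whole argument is the finiteness of the exceptional set in the first step, in particular the finiteness of the locus of $u_0$ where absolute irreducibility of $\widetilde P^\varphi(u_0,T,Y)$ fails; everything else is a direct dictionary translation between covers and their affine equations.
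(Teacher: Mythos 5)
Your proof is correct and follows the route the paper intends (the corollary is stated there without a separate proof): specialize the affine equation, identify the resulting extension with the fiber-twisted extension via theorem \ref{thm:main}, and conclude with the twisting lemma \ref{prop:twisted cover}; your Bertini--Noether argument for the finiteness of the exceptional set of $u_0$ is an acceptable substitute for the paper's polynomial form of the Grothendieck good reduction theorem (\S \ref{sec:preliminaries}), whose ``bad prime divisor'' handles monicity, degree, separability and absolute irreducibility of $\widetilde P^\varphi(u_0,T,Y)$ in one stroke. The only caveat --- shared with the paper's own statement of {\rm (*)} --- is that the bijection between $k$-points of the fiber of the normalized twisted cover above $t_0$ and $k$-roots of $\widetilde P^\varphi(u_0,t_0,Y)$ is automatic only when $t_0$ avoids the zero set of the $Y$-discriminant, which may strictly contain ${\bf t}|_{u_0}$, so your claim that the $d$ roots are simple whenever $t_0$ is merely unbranched is slightly too strong.
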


\begin{proof}[Proof of theorem \ref{thm:twist-main}] Theorem \ref{thm:twist-main} is a special case of corollary \ref{cor:twist-main}. Namely, from the two $k$-regular Galois extensions $F/k(T)$ and $L/k(T)$
given in theorem \ref{thm:twist-main}, consider the $K$-regular extension ${\mathcal F}/K(T)$ deduced from $F/k(T)$ by scalar extension from $k$ to $K=k(U)$, and let $\varphi:\Gabs_K \rightarrow G$ be a $\Gabs_K$-representation of the extension obtained by specializing $LK/K(T)$ at $T=U\in K$. Corollary \ref{cor:twist-main} applied for this ${\mathcal F}/K(T)$ and this $\varphi:\Gabs_K \rightarrow G$ yields theorem \ref{thm:twist-main}.

Furthermore, because ${\mathcal F}/K(T)$ is obtained by scalar extension from an extension of $k(T)$, the set ${\mathcal E}$ of bad $u_0$ in $k$ is a finite subset of $k$.

Finally the appearance of several polynomials $\widetilde P_1,\ldots,\widetilde P_n$ in theorem \ref{thm:twist-main} comes the fact that its statement is phrased in terms of field extensions rather than in group 
representations. In the generality of corollary \ref{cor:twist-main}, we have
\vskip 1,5mm

\noindent
(***) {\it the field extension $E|_{u_0}/k$ is the specialization of ${\mathcal F}|_{u_0}/k(T)$ at $t_0$ if and only if there exists $\chi \in {\rm Aut}(G)$ such that $\phi |_{u_0} \circ {\rm s}_{t_0}$  is conjugate in $G$ to $\chi \circ \varphi\circ  {\rm s}_{u_0}$.}
\vskip 1,5mm

\noindent
and so $\widetilde P_1,\ldots,\widetilde P_n$ are the polynomials $\widetilde P^{\chi\circ \varphi}$
with $\chi \in {\rm Aut}(G)$.
\end{proof}

%
%
%

\subsection{Proof of theorem \ref{thm:main}} \label{ssec:proof-main-thm}
%
%

%
%
Let ${\mathcal E}$ be the finite subset given by the Grothendieck good reduction theorem (\S \ref{sec:preliminaries}). Fix $u_0\in k\setminus {\mathcal E}$. The two extensions from the statement of theorem \ref{thm:main}
are well-defined and have the same branch point set, namely ${\bf t}_{u_0}$. We will show that they are $k(T)$-isomorphic by showing that they have the same $\pi_1$-representations. We need to compare $\widetilde{\phi |_{u_0}}^{\varphi \circ {\sf s}_{u_0}}$ from \S \ref{ssec:fiber-twisted} and some $\pi_1$-representation, say 
\vskip 1,5mm

\centerline{$\widetilde {\phi}^\varphi|_{u_0}: \pi_1(\Pp^1\setminus {\mathbf t}_{u_0}, t)_{k} \rightarrow S_d$}
\vskip 1,5mm

\noindent
of the $k$-regular extension $(\widetilde {\mathcal F}^\varphi)|_{u_0}\hskip 2pt /\hskip 1pt k(T)$.

As a first step, consider the restrictions of these $\pi_1$-representations to the geometric fundamental group $\pi_1(\Pp^1\setminus {\mathbf t}_{u_0}, t)_{\overline k}$. Recall that from the addendum to the Grothendieck good reduction theorem (\S \ref{sec:preliminaries}), we have a specialization isomorphism
\vskip 1,5mm

\centerline{${\rm sp}_{u_0}: \pi_1(\Pp^1\setminus {\mathbf t}, t)_{\overline{K}} \rightarrow \pi_1(\Pp^1\setminus
 {\mathbf t}_{u_0},t)_{\overline{k}}$}
\vskip 1,5mm

\noindent
and that for all $x\in \pi_1(\Pp^1\setminus {\mathbf t}, t)_{\overline{K}}$, we have
\vskip 2mm

\centerline{$\left\{
\begin{matrix}
\phi |_{u_0} (x) = \phi \circ {\rm sp}_{u_0}^{-1} (x) \hfill \cr
\widetilde {\phi}^\varphi |_{u_0} (x) = \widetilde {\phi}^\varphi \circ {\rm sp}_{u_0}^{-1} (x)
\end{matrix}
\right.$}

\vskip 2mm
\noindent
Using \S \ref{ssec:description-twisted}, we obtain
\vskip 2mm

\centerline{$\widetilde {\phi}^\varphi |_{u_0} (x) = \gamma (\phi({\rm sp}_{u_0}^{-1} (x)))= \gamma (\phi |_{u_0} (x))=\widetilde{\phi |_{u_0}}^{\varphi \circ {\sf s}_{u_0}} (x)$}

\vskip 2mm

To compare the restrictions to $\Gabs_k$ of the two $\pi_1$-representations, first show the following.

\begin{lemma} \label{lem:arith-action}
For all but finitely many $u_0\in k$ and all $\tau \in \Gabs_k$,  we have
\vskip 1,5mm

\centerline{$\phi |_{u_0} ({\sf s}_{u_0}(\tau)) = \phi({\sf s}_U \circ {\sf s}_{u_0}(\tau))$}

\end{lemma}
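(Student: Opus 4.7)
The plan is to interpret both sides of the equality as the Galois action of $\tau \in \Gabs_k$ on the same geometric object—viewed once over $K = k(U)$ and once after specialization at $U = u_0$—and then invoke the compatibility of the section at a rational point with the specialization isomorphism of fundamental groups.

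First I would unpack the two sides, keeping in mind that the symbol ${\sf s}_{u_0}$ plays two different roles. The right-hand side $\phi({\sf s}_U \circ {\sf s}_{u_0}(\tau))$ is the image under $\phi$ of the element of $\pi_1(\Pp^1 \setminus {\bf t}, t)_K$ obtained by first lifting $\tau$ to $\Gabs_K$ via the Puiseux embedding ${\sf s}_{u_0}: \Gabs_k \rightarrow \Gabs_K$, and then applying the section ${\sf s}_U$ associated to the $K$-rational point $T = U \in \Pp^1(K) \setminus {\bf t}$; it encodes the Galois action of $\tau$ on the fiber of ${\mathcal F}/K(T)$ above $T = U$. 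The left-hand side $\phi|_{u_0}({\sf s}_{u_0}(\tau))$ uses the other avatar of ${\sf s}_{u_0}$, namely the section $\Gabs_k \rightarrow \pi_1(\Pp^1 \setminus {\bf t}_{u_0}, t)_k$ associated to the $k$-rational point $T = u_0$ of $\Pp^1_k \setminus {\bf t}_{u_0}$, and encodes the Galois action of $\tau$ on the fiber of ${\mathcal F}|_{u_0}/k(T)$ above $T = u_0$.

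The key geometric input is that, for all but finitely many $u_0 \in k$, the $K$-rational point $T = U$ of $\Pp^1_K \setminus {\bf t}$ reduces to the $k$-rational point $T = u_0$ of $\Pp^1_k \setminus {\bf t}_{u_0}$. The excluded $u_0$ comprise the finite bad set ${\mathcal E}$ from the Grothendieck good reduction theorem together with those $u_0 \in k$ for which $u_0 \in {\bf t}_{u_0}$; this second condition cuts out a finite subset of $k$ via the equations $t_i(U) = U$, $i = 1, \ldots, r$. For $u_0$ avoiding these finitely many values, the compatibility of the specialization isomorphism ${\rm sp}_{u_0}: \pi_1(\Pp^1 \setminus {\bf t}, t)_{\overline K} \rightarrow \pi_1(\Pp^1 \setminus {\bf t}_{u_0}, t)_{\overline k}$ with evaluation at rational points (which is a standard consequence of Grothendieck's good reduction theorem and its addendum recalled in \S \ref{sec:preliminaries}) ensures that the composition ${\sf s}_U \circ {\sf s}_{u_0}$ corresponds, under the specialization morphism from $\pi_1(\Pp^1 \setminus {\bf t}, t)_K$ to $\pi_1(\Pp^1 \setminus {\bf t}_{u_0}, t)_k$, to the section ${\sf s}_{u_0}$ attached to $T = u_0$. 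Since $\phi|_{u_0}$ is, by its very construction, built out of $\phi$ through the inverse of this specialization morphism on the geometric part and through restriction along ${\sf s}_{u_0}: \Gabs_k \rightarrow \Gabs_K$ on the arithmetic part, applying $\phi$ and $\phi|_{u_0}$ to the two corresponding elements yields the same element of $G$.

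I expect the main obstacle to be bookkeeping rather than conceptual: each section ${\sf s}_{t_0}$ attached to a rational point is only canonically defined up to conjugation by an element of the geometric fundamental group, so one must either align the base point $t$ and the path liftings defining the two sections ${\sf s}_U$ and ${\sf s}_{u_0}$ so that they match strictly under specialization, or accept an equality valid only up to such inner conjugation. The second reading still suffices for the ensuing proof of theorem \ref{thm:main}, where the $\pi_1$-representations $\widetilde\phi^\varphi$ and $\widetilde{\phi|_{u_0}}^{\varphi \circ {\sf s}_{u_0}}$ are themselves only determined up to conjugation by an element of $\pi_1(\Pp^1 \setminus {\bf t}_{u_0}, t)_{\overline k}$.
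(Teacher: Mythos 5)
Your reading of the statement is right: you correctly disentangle the two roles of the symbol ${\sf s}_{u_0}$ (the Puiseux-series monomorphism $\Gabs_k\rightarrow\Gabs_K$ on the right-hand side, the section of $\pi_1(\Pp^1\setminus{\bf t}_{u_0},t)_k\rightarrow\Gabs_k$ at the $k$-point $T=u_0$ on the left-hand side), and you correctly identify the lemma as a compatibility between the section at the $K$-point $T=U$ and the section at its reduction $T=u_0$. You are also right that an equality up to conjugation by the geometric fundamental group would suffice downstream.

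The gap is that your central step --- ``the compatibility of the specialization isomorphism with evaluation at rational points, which is a standard consequence of Grothendieck's good reduction theorem and its addendum'' --- is precisely the content of the lemma, and it is \emph{not} among the results recalled in the appendix: the addendum only provides the isomorphism ${\rm sp}_{u_0}$ between \emph{geometric} fundamental groups and says nothing about arithmetic sections attached to rational points. So as written, the argument reduces the lemma to itself. The paper closes exactly this gap by an explicit computation: take a primitive element ${\mathcal Y}_1$ of ${\mathcal F}/K(T)$ integral over $k[U,T]$, expand its $d$ conjugates as power series $\sum_n b_{in}(U)(T-U)^n$ in $\overline K((T-U))$ (this realizes $\phi\circ{\sf s}_U$ concretely), invoke Eisenstein's theorem to produce $E(U)\in k[U]\setminus\{0\}$ with $E(U)^{n+1}b_{in}(U)\in k[U]$ so that for $u_0$ outside the zero set of $E$ the coefficients --- a priori arbitrary elements of $\overline{k(U)}$, possibly with poles at $u_0$ --- can actually be evaluated, and then verify coefficientwise that $\bigl({\sf s}_{u_0}(\tau)(b_{in}(U))\bigr)|_{u_0}=\tau(b_{in}(u_0))$, which is immediate from the definition of ${\sf s}_{u_0}:\Gabs_k\rightarrow\Gabs_K$ via Puiseux series in $U-u_0$. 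The Eisenstein step is essential (it accounts for part of the finite exceptional set of $u_0$, alongside the bad primes of the minimal polynomial of ${\mathcal Y}_1$ which guarantee that $k(T,{\mathcal Y}_1|_{u_0})$ really is the reduction ${\mathcal F}|_{u_0}$), and nothing in your proposal plays its role. If you want to keep your more structural phrasing, you would need to either cite a precise reference for the compatibility of specialization with arithmetic sections or supply an argument of this kind; the concrete power-series computation also has the advantage of giving the equality on the nose rather than up to conjugation.
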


\begin{proof}
Namely, with ${\mathcal Y}_1$ a primitive element of ${\mathcal F}/K(T)$, which we may assume to be integral over $k[U,T]$, the right-hand side term corresponds to the action of ${\sf s}_{u_0}(\tau) \in \Gabs_K$ on the $d$ different $K$-conjugates 
\vskip 1mm
 
\centerline{$\displaystyle {\mathcal Y}_i = \sum_{n=0}^\infty b_{in}(U) (T-U)^n$, \hskip 3mm $j=1,\ldots,d$}
\vskip 1mm

\noindent
of ${\mathcal Y}_1$, viewed in $\overline K((T-U))$; the action of ${\sf s}_{u_0}(\tau)$ is given by the action on the coefficients $b_{in}(U)\in \overline{K}$ ($n\geq 0$). 

From the Eisenstein theorem,
there exists a polynomial $E(U) \in k[U]$, $E(U)\not=0$, such that $E(U)^{n+1} \hskip 2pt b_{in}(U)\in k[U]$ for every $n\geq 0$, $i=1,\ldots,d$. Enlarge again the set ${\mathcal E}$ to contain the roots of $E(U)$. Then ${\mathcal Y}_1,\ldots, {\mathcal Y}_d$ can be specialized at $U=u_0$ to yield $d$ formal power series in $\overline k[[T-u_0]]$
\vskip 1mm
 
\centerline{$\displaystyle {\mathcal Y}_i |_{u_0}= \sum_{n=0}^\infty b_{in}(u_0) (T-u_0)^n$, \hskip 3mm $j=1,\ldots,d$}
\vskip 1mm

If ${\mathcal E}$ is again enlarged to contain the roots of the bad prime divisor of the irreducible polynomial $P\in k[U,T,Y]$ of ${\mathcal Y}_1$ over $k(U,T)$ (\S \ref{sec:preliminaries}), then $P(u_0,T,Y)$ is irreducible in $\overline k[T,Y]$; it is the irreducible polynomial of ${\mathcal Y}_1|_{u_0}$ and the extension $k(T,{\mathcal Y}_1|_{u_0})/k(T)$ is $k(T)$-isomorphic to the
extension ${\mathcal F}|_{u_0}/K(T)$.

The left-hand side term $\phi |_{u_0} ({\sf s}_{u_0}(\tau))$ of the claimed formula corresponds to the action of $\tau \in \Gabs_k$ on the $d$ different $K$-conjugates  ${\mathcal Y}_1 |_{u_0},\ldots,  {\mathcal Y}_d|_{u_0}$, with $\tau$ acting on the coefficients $b_{in}(u_0)\in \overline{k}$ ($n\geq 0$). Clearly we have 
\vskip 1,5mm
 
\centerline{$\displaystyle \hskip 15mm \left({\sf s}_{u_0}(\tau)(b_{in}(U)\right)|_{u_0} = \tau(b_{in}(u_0))$, \hskip 3mm ($i=1,\ldots,d$, $n\geq 0$)}
\vskip 1mm

\noindent
and so
\vskip 1,5mm
 
\centerline{$\displaystyle \hskip 15mm ({\sf s}_{u_0}(\tau)({\mathcal Y}_i)|_{u_0} = \tau({\mathcal Y}_i |_{u_0})$, \hskip 3mm ($i=1,\ldots,d$)}
\vskip 1,5mm

\noindent
which corresponds to the claim. \end{proof}

Lemma \ref{lem:arith-action}, applied with $\widetilde {\phi}^\varphi$ replacing ${\phi}$ also gives
\vskip 1,5mm
 
\centerline{
$\widetilde {\phi}^\varphi |_{u_0} ({\sf s}_{u_0}(\tau)) = \widetilde {\phi}^\varphi({\sf s}_U \circ {\sf s}_{u_0}(\tau))$}
\vskip 1,5mm

\noindent
Using \S \ref{ssec:description-twisted}, we obtain
\vskip 2mm

\centerline{
$\begin{matrix}
\widetilde {\phi}^\varphi |_{u_0} ({\sf s}_{u_0}(\tau)) & = \gamma (\phi({\sf s}_U \circ {\sf s}_{u_0}(\tau))  \hskip 2pt \delta(\varphi({\sf s}_{u_0}(\tau)) \hfill \cr
& = \gamma (\phi |_{u_0} ({\sf s}_{u_0}(\tau)) \hskip 2pt \delta(\varphi({\sf s}_{u_0}(\tau)) \hfill \cr
& = \widetilde{\phi |_{u_0}}^{\varphi \circ {\sf s}_{u_0}} ({\sf s}_{u_0}(\tau)) \hfill \cr
\end{matrix}
$}
\vskip 3mm

\noindent
This concludes the proof of $\widetilde{\phi |_{u_0}}^{\varphi \circ {\sf s}_{u_0}} =\widetilde {\phi}^\varphi|_{u_0}$ and so of theorem \ref{thm:main}.

\bigskip

\section{Appendix: Good reduction \& specializations of covers} \label{sec:preliminaries}
This appendix recalls some classical results which essentially go back to Grothendieck about the good reduction of $K$-regular extensions and the inertia in their specializations. We have adjusted to our situation the original statements which hold in a bigger generality; in particular our statements are phrased in field extension terms rather than in a scheme theoretic language. 
Assume $K$ is the fraction field of a Dedekind domain $A$; typically $K=k(U)$ and $A=k[U]$ with $U$ a new indeterminate. 

Given a non-zero prime ideal ${\frak p}\subset A$ (typically ${\frak p}=\langle U-u_0 \rangle$ with $u_0\in k$ when $A=k[U]$), denote the residue field by $\kappa_{\frak p}$, 
the completion of $A$ (resp. of $K$) at ${\frak p}$ by $\widetilde A_{\frak p}$ (resp. by $\widetilde K_ {\frak p}$), 
the algebraic closure  of $\widetilde K_ {\frak p}$ by $C_{\frak p}$ and fix an embedding $\overline K\subset C_{\frak p}$.

 Let $F/K(T)$ be a $K$-regular extension of group $G$, with branch point set ${\bf t}=\{t_1,\ldots,t_r\}$, inertia canonical invariant ${\bf C}=(C_1,\ldots,C_r)$ and associated ramification indices ${\bf e} =(e_1,\ldots,e_r)$. Let ${\frak p}\subset A$ be a non-zero prime ideal. We recall below some classical results about 
\vskip 0,5mm

\noindent
(a) the good reduction of $F/K(T)$ modulo the prime ideal ${\frak p}$, and,
\vskip 0,5mm

\noindent
(b) the ramification above  the prime ideal ${\frak p}$ in specializations $F_{t_0}/K$ at points $t_0\in \Pp^1(K)$.
\vskip 0,5mm

These results go back to general results of Grothendieck \cite{SGA}, \cite{GrMu} and more specific versions by Beckmann for regular extensions $F/K(T)$  over number fields \cite{Beckmann-specialization}. Here, regarding (b), we follow 
Legrand's variant \cite[\S 2]{legrand1} extending Beckmann's statement to the situation the ground field is the fraction field of an arbitrary Dedekind domain. For (a) we follow the variant given in \cite{acta2016}.
\vskip 2mm 

\noindent
{\bf Classical assumptions.}
We first list some classical assumptions involved in these statements; we refer to the articles cited above for more 
details about them. The main point we will use is that each of them is satisfied for all but finitely many primes ${\frak p}$.


\vskip 0,5mm

\noindent
(1) $|{\mathcal G}|\notin {\frak p}$
\vskip 0,5mm
\noindent

\noindent
(2)  {\it there is no vertical ramification at ${\frak p}$ in the extension $F/K(T)$. }
\vskip 0,5mm

\noindent
(3) {\it no two different branch points of $F/K(T)$ meet modulo ${\frak p}$.}
\vskip 0,5mm

\noindent
(4) {\it the ideal ${\frak p}$ is unramified in the extension $K(t_1,\ldots,t_r)/K$.}

\noindent
(5) {\it $t_i$ and $1/t_i$ are integral over $\widetilde A_{\frak p}$, $i=1,\ldots, r$.}
\vskip 1,5mm

We will say that ${\frak p}$ is a {\it bad prime of the extension $F/K(T)$} if conditions (2), (3) hold\footnote{Legrand also includes (1) and (4). For consistency with the other good/bad prime notion, we prefer to stick to (2) and (3) and repeat (1) and (4) when necessary.} and that it is {\it good} otherwise. 

We also recall the related notion of {\it good/bad primes of a non-constant polynomia}l $P\in A[T,Y]$, irreducible in $\overline K[T,Y]$ and monic in $Y$, defined in \cite{acta2016}: a non-zero element ${\mathcal B}_P \in A$ is constructed and called the bad prime divisor of $P$; it is essentially the discriminant w.r.t $T$ of some ``reduced form'' of the discriminant $\Delta_P(T)$ of $P$ w.r.t. $Y$. A prime ${\frak p}$ is said to be a {\it good prime of $P$} if
\vskip 1,5mm

\noindent
(6) ${\mathcal B}_P \notin{\frak p}$.
\vskip 1mm

\noindent
Again there are only finitely many {\it bad primes} for the polynomial $P$. The two notions compare as follows: if ${\frak p}$ is good for $P$ then it is also good for the extension $K(T)[Y]/\langle P\rangle$ of $K(T)$.

 Let $B$ be the integral closure of $\widetilde A_{\frak p}[T]$ in the field $F\widetilde K_ {\frak p}$.

\vskip 2mm

\noindent
{\bf Grothendieck good reduction theorem.} {\it Assume that ${\frak p}$ is a good prime of $F/K(T)$ and that assumption {\rm (1)} holds.
Then the extension $F/K(T)$ has {\rm good reduction} at ${\frak p}$, i.e.: ${\frak p} B$ is a prime ideal of $B$ and the fraction field $\varepsilon$ of $B/{\frak p} B$ is a separable extension of $\kappa_{\frak p}(T)$ and satisfies 
\vskip 1mm

\centerline{$[\varepsilon :\kappa_{\frak p}(T)]=[\overline{\kappa_{\frak p}}\hskip 2pt \varepsilon :\overline{\kappa_{\frak p}}(T)]= [F:K(T)]=\deg_Y(P)$. \footnote{geometrically: if $\tilde f_0: \widetilde{\mathcal C}_0\rightarrow \Pp^1_{\kappa_{\frak p}}$ is the special fiber of $\tilde f$, then $\tilde f_0$ is generically \'etale, $\widetilde{\mathcal C}_0$ is geometrically irreducible and $[\overline{\kappa_{\frak p}}(\widetilde{\mathcal C}_0):\overline{\kappa_{\frak p}}(T)]= [F:K(T)]$.}} }
\vskip 3mm

The extension $\varepsilon/\kappa_{\frak p}(T)$ is called the {\it {\rm (}good{\rm )} reduction} of $F/K(T)$ at ${\frak p}$ and  denoted by $F|_{\frak p}/\kappa_{\frak p}(T)$ --- $F|_{u_0}/k(T)$ when ${\frak p}=\langle U-u_0\rangle \subset A=k[U]$. The vertical bar in the notation is meant to distinguish the reduction from the specialization. The extension $F|_{\frak p}/\kappa_{\frak p}(T)$ is $\kappa_{\frak p}$-regular and its branch point set is the reduction, denoted by ${\bf t}_{\frak p}$, of the set ${\bf t}$ modulo an (arbitrary) prime ideal above ${\frak p}$ of 
the integral closure of $A_ {\frak p}$ in $K_ {\frak p}({\bf t})$.

When the residue field $\kappa_{\frak p}$ is algebraically closed, we have this more precise addendum.
Its statement uses the notion of fundamental group representation of an extension $F/\overline K(T)$; it is recalled in \S \ref{ssec:fund-group-basics}.

\vskip 2mm

\noindent
{\bf Addendum to Grothendieck good reduction theorem.}  
\noindent
{\it Under the same assumptions, there is a
specialization isomorphism
\vskip 1,5mm

\centerline{${\rm sp}_{{\frak p}}: \pi_1(\Pp^1\setminus {\mathbf t}, t)_{\overline{K}} \rightarrow \pi_1(\Pp^1\setminus
 {\mathbf t}_{{\frak p}},t)_{\overline{\kappa_{\frak p}}}$}
\vskip 1,5mm

\noindent
which has this further property: if $\phi_{\overline K}: \pi_1(\Pp^1\setminus {\mathbf t}, t)_{\overline{K}} 
\rightarrow G\subset S_d$ is a $\pi_1$-representation of the extension $F\overline K/\overline K(T)$, then 
the morphism
\vskip 1,5mm

\centerline{$\phi_{\overline K} \circ {\rm sp}_{{\frak p}}^{-1}: \pi_1(\Pp^1\setminus
 {\mathbf t}_{{\frak p}},t)_{\overline{\kappa_{\frak p}}} \rightarrow G\subset S_d$} 
 \vskip 1,5mm

\noindent
is a  $\pi_1$-representation of the reduction $F|_{u_0}/\kappa_{\frak p}(T)$.}

\vskip 2mm

Let $P\in A[T,Y]$ be a non-constant polynomial, irreducible in $\overline K[T,Y]$, monic in $Y$,
{\it e.g.} an affine equation of the $K$-regular extension $F/K(T)$.
\vskip 2mm

\noindent
{\bf Polynomial form of the Grothendieck good reduction theorem} {\it Assume that ${\frak p}$ is a good prime 
of $P$ and that assumption {\rm (1)} holds. Then the polynomial ``$P$ modulo ${\frak p}$''  in 
$\kappa_{\frak p}[T,Y]$ is ir\-re\-du\-cible in $\overline{\kappa_{\frak p}}[T,Y]$ and of  
group $G$.}
\vskip 2mm

As explained in \cite{acta2016}, this polynomial conclusion is more precise than the field extension 
conclusion from GRT; the assumption is however also stronger.
Finally we recall the conclusions from \cite{legrand1} about the inertia in specializations.
\vskip 2mm

\noindent
{\bf Specialization Inertia Theorem.} {\it Let $t_0\in \Pp^1(K)\setminus {\bf t}$.
\vskip 0,7mm

\noindent
{\rm (a)} If ${\frak p}$ ramifies in $F_{t_0}/K$, then $F/K(T)$ has vertical ramification at ${\frak p}$ (i.e. condition {\rm (2)}
holds) or $t_0$ meets some branch point modulo ${\frak p}$.
\vskip 1,2mm

\noindent
{\rm (b)} Assume that ${\frak p}$ is a good prime of $F/K(T)$ and assumptions {\rm (1), (4), (5)} holds. 
If for some $i\in \{1,\ldots,r\}$, $t_0$ and $t_i$ meet modulo ${\frak p}$, then the inertia group of 
$F_{t_0}/K$ at ${\frak p}$ is conjugate in $G$ to the cyclic group
\vskip 1mm

\centerline{$\langle g_i^{I_{\frak p}(t_0,t_i)} \rangle$}
\vskip 1mm

\noindent
where $g_i$ is any element of the conjugacy class $C_i$ and $I_{\frak p}(t_0,t_i)$ is the intersection multiplicity of $t_0$ and $t_i$.}

\bibliography{parametric}

\begin{thebibliography}{GMPS15}

\bibitem[Bec91]{Beckmann-specialization}
Sybilla Beckmann.
\newblock On extensions of number fields obtained by specializing branched
  coverings.
\newblock {\em J. Reine Angew. Math.}, 419:27--53, 1991.

\bibitem[BR97]{buhler-reichstein}
J.~Buhler and Z.~Reichstein.
\newblock On the essential dimension of a finite group.
\newblock {\em Compositio Math.}, 106(2):159--179, 1997.

\bibitem[CS82]{cassels-schinzel}
J.~W.~S. Cassels and A.~Schinzel.
\newblock Selmer's conjecture and families of elliptic curves.
\newblock {\em Bull. London Math. Soc.}, 14(4):345--348, 1982.

\bibitem[CT00]{colliot-annals}
Jean-Louis Colliot-Th{\'e}l{\`e}ne.
\newblock Rational connectedness and {G}alois covers of the projective line.
\newblock {\em Ann. of Math. (2)}, 151(1):359--373, 2000.

\bibitem[DD97]{DeDo1}
Pierre D{\`e}bes and Jean-Claude Douai.
\newblock Algebraic covers: field of moduli versus field of definition.
\newblock {\em Annales Sci. E.N.S.}, 30:303--338, 1997.

\bibitem[D{\`e}b99]{DeBB}
Pierre D{\`e}bes.
\newblock Galois covers with prescribed fibers: the {B}eckmann-{B}lack problem.
\newblock {\em Ann. Scuola Norm. Sup. Pisa, {\rm Cl. Sci. (4)}}, 28:273--286,
  1999.

\bibitem[D{\`e}b16]{acta2016}
Pierre D{\`e}bes.
\newblock Reduction and specialization of polynomials.
\newblock {\em Acta Arith.}, 172.2:175--197, 2016.

\bibitem[D{\`e}bar]{IsrJ2}
Pierre D{\`e}bes.
\newblock On the {M}alle conjecture and the self-twisted cover.
\newblock {\em Israel J. Math.}, to appear.

\bibitem[DG12]{DEGha}
Pierre D{\`e}bes and Nour Ghazi.
\newblock Galois covers and the {H}ilbert-{G}runwald property.
\newblock {\em Ann. Inst. Fourier}, 62/3:989--1013, 2012.

\bibitem[DL13]{DeLe1}
Pierre D{\`e}bes and Fran{\c{c}}ois Legrand.
\newblock Specialization results in {G}alois theory.
\newblock {\em Trans. Amer. Math. Soc.}, 365(10):5259--5275, 2013.

\bibitem[FJ04]{FrJa}
Michael~D. Fried and Moshe Jarden.
\newblock {\em Field arithmetic}, volume~11 of {\em Ergebnisse der Mathematik
  und ihrer Grenzgebiete}.
\newblock Springer-Verlag, Berlin, 2004.
\newblock (second edition).

\bibitem[GM71]{GrMu}
Alexandre Grothendieck and Jacob~P. Murre.
\newblock {\em The Tame Fundamental Group of a Formal Neighbourhood of a
  Divisor with Normal Crossings on a Scheme}, volume 208 of {\em LNM}.
\newblock Springer, 1971.

\bibitem[GMPS15]{guest-at-al}
Simon Guest, Joy Morris, Cheryl~E. Praeger, and Pablo Spiga.
\newblock On the maximum orders of elements of finite almost simple groups and
  primitive permutation groups.
\newblock {\em Trans. Amer. Math. Soc.}, 367(11):7665--7694, 2015.

\bibitem[Gro71]{SGA}
Alexandre Grothendieck.
\newblock {\em Rev\^etements \'{e}tales et groupe fondamental}, volume 224 of
  {\em LNM}.
\newblock Springer, 1971.

\bibitem[HM99]{hashimoto-miyake}
Ki-Ichiro Hashimoto and Katsuya Miyake.
\newblock Inverse {G}alois problem for dihedral groups.
\newblock In {\em Number theory and its applications ({K}yoto, 1997)}, volume~2
  of {\em Dev. Math.}, pages 165--181. Kluwer Acad. Publ., Dordrecht, 1999.

\bibitem[JLY02]{JLY}
Christian~U. Jensen, Arne Ledet, and Noriko Yui.
\newblock {\em Generic polynomials. Constructive Aspects of the Inverse Galois
  Problem}.
\newblock Cambridge University Press, 2002.

\bibitem[Leg13]{legrand-thesis}
Fran{\c{c}}ois Legrand.
\newblock Sp\'ecialisations de rev\^etements et th\'eorie inverse de {G}alois.
\newblock {\em {PhD thesis} - Universit\'e Lille 1}, 2013.

\bibitem[Leg15]{legrand2}
Fran{\c{c}}ois Legrand.
\newblock Parametric {G}alois extensions.
\newblock {\em J. Algebra}, 422:187--222, 2015.

\bibitem[Leg16a]{legrand-at-least-1-nonparam}
Fran{\c{c}}ois Legrand.
\newblock On parametric extensions over number fields.
\newblock {\em preprint}, 2016.
\newblock arXiv:1602.06706.

\bibitem[Leg16b]{legrand-hyper-elliptic}
Fran{\c{c}}ois Legrand.
\newblock Twists of super elliptic curves without rational points.
\newblock {\em preprint}, 2016.

\bibitem[Legar]{legrand1}
Fran{\c c}ois Legrand.
\newblock Specialization results and ramification conditions.
\newblock {\em Isr. J. Math.}, (to appear).
\newblock arXiv:1310.2189.

\bibitem[Sch82]{schinzel-book}
Andrzej Schinzel.
\newblock {\em Selected topics on polynomials}.
\newblock University of Michigan Press, Ann Arbor, Mich., 1982.

\bibitem[Sch00]{schinzel-book2000}
A.~Schinzel.
\newblock {\em Polynomials with special regard to reducibility}, volume~77 of
  {\em Encyclopedia of Mathematics and its Applications}.
\newblock Cambridge University Press, Cambridge, 2000.
\newblock With an appendix by Umberto Zannier.

\bibitem[Sel54]{selmer}
Ernst~S. Selmer.
\newblock A conjecture concerning rational points on cubic curves.
\newblock {\em Math. Scand.}, 2:49--54, 1954.

\bibitem[Ser92]{Serre-topics}
Jean-Pierre Serre.
\newblock {\em Topics in Galois Theory}.
\newblock Research Notes in Mathematics. Jones and Bartlett Publishers, 1992.

\bibitem[Tho84]{thompson}
John~G. Thompson.
\newblock Some finite groups which appear as {${\rm Gal}\,L/K$}, where
  {$K\subseteq {\bf Q}(\mu _{n})$}.
\newblock {\em J. Algebra}, 89(2):437--499, 1984.

\end{thebibliography}
\bibliographystyle{alpha}

\end{document}